\numberwithin{equation}{section}
\newcommand{\norm}[1]{\left\|#1\right\|}
\newcommand{\abs}[1]{\left|#1\right|}
\theoremstyle{plain}
\newtheorem{theorem}{Theorem}[section]
\newtheorem{lemma}[theorem]{Lemma}
\newtheorem{proposition}[theorem]{Proposition}
\theoremstyle{definition}
\newtheorem{remark}[theorem]{Remark}
\title[Boundary value problems for two dimensional steady incompressible fluids]{Boundary value problems for two dimensional steady incompressible fluids}
\author[D.~Alonso-Or\'an]{Diego Alonso-Or\'an}
\address{Institut für Angewandte Mathematik, Universitat Bonn, Endenicher Allee 60, 53115 Bonn, Germany}
\email{alonso@iam.uni-bonn.de}
\author[J.~J.L. Vel\'azquez]{Juan J.L. Vel\'azquez }
\address{Institut für Angewandte Mathematik, Universitat Bonn, Endenicher Allee 60, 53115 Bonn, Germany}
\email{velazquez@iam.uni-bonn.de}
\begin{document}
\begin{abstract}
In this paper we study the solvability of different boundary value problems for the two dimensional steady incompressible Euler equation and for the magneto-hydrostatic equation. Two main methods are currently available to study those problems, namely the Grad-Shafranov method and the vorticity transport method. We describe for which boundary value problems these methods can be applied. The obtained solutions have non-vanishing vorticity. 
\end{abstract}
\maketitle

\section{Introduction}
In this paper we consider several boundary value problems for the two dimensional incompressible steady Euler equation  describing the motion of an inviscid fluid given by 
\begin{equation}\label{Euler2D:eq}
\left\lbrace
\begin{array}{lll}
v\cdot \nabla v&=-\nabla p, \quad \mbox{in } \Omega \\
\nabla\cdot v&=0,  \quad \mbox{in }  \Omega
\end{array} \right.
\end{equation}
where $u:\Omega\to \mathbb{R}^2$ is the velocity fluid vector field and $p:\Omega\to \mathbb{R}$ is the scalar pressure on a suitable domain $\Omega$. System \eqref{Euler2D:eq} can be rewritten as 
\begin{equation}\label{Euler2D:eq:omega}
\left\lbrace
\begin{array}{lll}
v \times \omega&=\nabla H, \quad \mbox{in } \Omega\\
\nabla\times v&=\omega, \quad \mbox{in } \Omega\\
\nabla\cdot v&=0,  \quad \mbox{in }  \Omega
\end{array} \right.
\end{equation}
by using the well-known identity $(v\cdot\nabla) v- \frac{1}{2}\nabla(\abs{v}^2)=-v\times \omega$. Here the function $H=p+\frac{1}{2}\abs{v}^2$ is known as the Bernoulli function and the rotational of the velocity field, $\nabla\times v=\omega$ is called vorticity field. 

Solutions to \eqref{Euler2D:eq} with $\omega=0$ are termed irrotational solutions. It is well-known that boundary problems for \eqref{Euler2D:eq} (even in the three dimensional case) reduce to boundary value problems for the Laplace equation. Indeed, if $\omega=0$ we have that $v=\nabla\psi$ and the second equation in \eqref{Euler2D:eq}  implies that $\Delta \psi=0$. Technical difficulties can arise for particular types of boundary conditions. Nevertheless the well developed theory of harmonic functions can be applied to study those problems. To construct flows with non-vanishing vorticity is more challenging and the corresponding boundary value problems have been less studied in the mathematical literature.

A system of equations which is mathematically equivalent to \eqref{Euler2D:eq} is the  set of equations describing magnetohydrostatics (MHS). This is just the system for the magnetohydrodynamic equations for incompressible fluids with zero fluid velocity, namely
\begin{equation}\label{MHS2D:current}
\left\lbrace
\begin{array}{lll}
 B\times j&=-\nabla p, \quad \mbox{in }  \Omega\\
\nabla\times B&=j,  \quad \mbox{in }  \Omega \\
\nabla\cdot B&=0,  \quad \mbox{in }  \Omega
\end{array} \right.
\end{equation}
where $B$ denotes the magnetic field, $j$ the current density and $p$ the fluid pressure.
A quick  inspection of equations \eqref{MHS2D:current} reveal the equivalence of the magnetohydrostatic equations and the Euler equations  \eqref{Euler2D:eq:omega} using the transformation of variables
\begin{align}\label{transformation:variable}
B&\leftrightarrow v, \quad  j \leftrightarrow \omega, \quad  p \leftrightarrow -H.
\end{align}
Magnetohydrostatics is relevant in a wide variety of problems in astrophysical plasmas describing  coronal field structures and stellar winds. The system \eqref{MHS2D:current} is also a central model  to the study of plasma confinement fusion, (cf. \cite{Goedbloed-Poedts-2010, Goedbloed-Poedts-2010-2, Priest-2014}). 

 It is not a priori clear for which type of boundary conditions the problem \eqref{Euler2D:eq} or  \eqref{MHS2D:current} can be solved. This issue has been considered in the seminal paper of Grad and Rubin \cite{Grad-Rubin-1958} where the authors  describe several meaningful boundary value problems related to the MHS equations in two dimensional and three dimensional cases.  The main goal of this article is to study the solvability for different types of boundary value problems for the two dimensional steady incompressible Euler equations  \eqref{Euler2D:eq} (or equivalently the MHS equations (\ref{MHS2D:current})). A relevant feature of the solutions constructed in this paper is that the vorticity $\omega$ (or the current $j$) is different from zero for generic choices of the boundary values. Since our main goal is to examine the types of boundary conditions yielding well-posedness for   \eqref{Euler2D:eq} or \eqref{MHS2D:current} we will restrict ourselves to a very particular geometric setting, namely we will assume that 
 \begin{equation}\label{domain}
 \Omega=\mathbb{S}^{1}\times (0,L),
  \end{equation}
with $L>0.$ There are several reasons to choose this particular domain. First, due to the directionality of the velocity field it is natural to impose different boundary conditions on different parts of the boundary $\partial\Omega$. More precisely, one can impose different boundary conditions on the subsets of $\partial\Omega$ where $v\cdot n >0$ or $v\cdot n<0$. However, on the points of the boundary where $v\cdot n =0$ singularities for the solutions can arise. This introduces additional technical difficulties. The analysis of these singular behaviors is interesting but they will be not considered in this paper. 
 
Notice that if $\Omega$ is as in \eqref{domain}, we have $\partial\Omega=\partial\Omega_{+}\cup\partial\Omega_{-}$ where $\partial\Omega_{+}=\mathbb{S}^{1}\times\{L\}$ and $\partial\Omega_{-}=\mathbb{S}^{1}\times\{0\}$. In all the cases considered in this manuscript we will impose different types of boundary conditions on $\partial\Omega_{+}, \partial\Omega_{-}$. Since $\partial\Omega_{+}\cap \partial\Omega_{-}=\emptyset$ it is possible to impose 
boundary conditions which guarantee  $v\cdot n\neq 0$ at all the points in $\partial \Omega$. This is not case if we consider domains $\Omega$ with a connected boundary $\partial\Omega$ due to the fact that $\mbox{div }v=0$ on $\Omega$. 

The results of this paper can be easily generalized for domains $\Omega= \{(x_1,x_2) : \gamma_1(x_1)< x_2< \gamma_2(x_1) \}$
where $\gamma_j$ are smooth functions satisfying the periodicity condition $\gamma_{j}(x_1+1)=\gamma_{j}(x_1)$ for $j=1,2$. In this case we will look for solutions $(v,p)$ such that $v(x_1+1,x_2)=v(x_1,x_2)$ and $p(x_1+1,x_2)=p(x_1,x_2)$. 

The different types of boundary conditions  that we would consider in this paper are collected in the following table:
\begin{table}[h!]
  \begin{center}
    \scalebox{0.9}{
    \begin{tabular}{l|l|l} 
      \textit{BVC} & \centering{\textit{2D Euler equation \eqref{Euler2D:eq}}} & \textit{2D MHS equation \eqref{MHS2D:current}}\\
      \hline
      \hline
  \textit{(A)} & $ v\cdot n=f \ \mbox{on}  \ \partial\Omega, \quad  p + \frac{|v|^2}{2}=h \ \mbox{on} \ \partial\Omega_{-}$ & $B\cdot n=f \  \mbox{on} \ \partial\Omega, \quad p =h \ \mbox{on} \ \partial\Omega_{-}$ \\

         \textit{(B)}   & $ v\cdot n=f \  \mbox{on} \ \partial\Omega, \quad  p =h \ \mbox{on} \ \partial\Omega_{-}$& $B\cdot n=f \  \mbox{on} \ \partial\Omega, \quad p + \frac{|B|^2}{2}=h \ \mbox{at} \ \partial\Omega_{-}$ \\
        \textit{(C)}   & $p=h \ \  \mbox{on} \ \partial\Omega, \quad v\cdot n=f \ \mbox{on} \ \partial\Omega_{-} $ & $p+ \frac{|B|^2}{2}=h \  \mbox{on}  \ \partial\Omega, \quad  B\cdot n=f \ \mbox{on}  \ \partial\Omega_{-} $ \\
        \textit{(D)}  &  $p+\frac{|v|^2}{2}=h \ \  \mbox{on} \ \partial\Omega, \quad v\cdot n=f \ \mbox{on} \ \partial\Omega_{-} $  & $p=h \  \mbox{at} \ \Omega, \quad B\cdot n=f \ \mbox{at}  \ \partial\Omega_{-} $  \\
  \textit{(E)}  & $v\cdot n=f \ \mbox{on} \ \partial\Omega, \quad v\cdot t= h  \ \mbox{on}  \ \partial\Omega_{-}$ & $B\cdot n=f \ \mbox{on} \ \partial\Omega, \quad B\cdot t= h \  \mbox{on}  \ \partial\Omega_{-}$ \\
    \textit{(F)}  & 
    $ \begin{cases}
    p=h^{+} \ \mbox{on} \ \partial\Omega_{+} \\
    p+\frac{\abs{v^2}}{2}=h^{-} \ \mbox{on} \ \partial\Omega_{-}
    \end{cases} 
    $,  $v\cdot n=f^{+} \ \mbox{on} \ \partial\Omega_{+}$  &   $ \begin{cases}
    p+\frac{\abs{B}^2}{2}=h^{+} \ \mbox{on} \ \partial\Omega_{+} \\
    p=h^{-} \ \mbox{on} \ \partial\Omega^{-}	
    \end{cases} 
    $,  $B\cdot n=f^{+} \ \mbox{on} \ \partial\Omega_{+}$ \\
        \textit{(G)}  & 
    $ \begin{cases}
    p=h^{+} \ \mbox{on} \ \partial\Omega_{+} \\
    p+\frac{\abs{v^2}}{2}=h^{-} \ \mbox{on} \ \partial\Omega_{-}
    \end{cases} 
    $, $v\cdot n=f^{-} \ \mbox{on} \ \partial\Omega_{-}$  &   $ \begin{cases}
    p+\frac{\abs{B}^2}{2}=h^{+} \ \mbox{on} \ \partial\Omega_{+} \\
    p=h^{-} \ \mbox{on} \ \partial\Omega_{-}
    \end{cases} 
    $, $B\cdot n=f^{-} \ \mbox{on} \ \partial\Omega_{-}$\\
    \end{tabular}  \vspace{0.3cm}
    }
    \caption{\textit{Different types of boundary value conditions}}
      \label{tab:table1}
  \end{center}
\end{table}

We remark that each of the boundary value problems appearing in the same row in Table \ref{tab:table1} yield the same PDE problem in spite of the fact that the boundary conditions imposed for Euler equation \eqref{Euler2D:eq} and MHS  \eqref{MHS2D:current} are different. This can be seen using the variable transformation in \eqref{transformation:variable}. 

Several before mentioned boundary value conditions have a simple interpretation from the physical point of view, since we prescribed either the inflow and outflow fluxes and the pressures or the Bernoulli function on parts or the full boundary. 

We notice that in  \cite{Grad-Rubin-1958} the study of the boundary value problems \textit{(A)} and \textit{(E)} in Table \ref{tab:table1} has been posed for the MHS equations \eqref{MHS2D:current} as well as additional boundary value problems in three dimensions. Moreover, the authors also suggest an iteration scheme to solve these boundary value problems but so far the precise conditions for convergence of the iterative method has not been studied in detail. Nevertheless, the method has been seen to be successful for constructing Beltrami fields in 3D which are particular pressureless solutions of the Euler equation \eqref{Euler2D:eq:omega} (or equivalently magnetic pressureless solutions for the MHS \eqref{MHS2D:current}),  see \cite{ABM-1999,Bineau-1972,EPS-2018}.

\subsection*{Previous results}
In order to solve boundary value problems for the steady Euler or the MHS equations two main methods have been considered in the literature: the Grad-Shafranov method \cite{Grad-1967,Safranov-1966} and the \textit{vorticity transport method} introduced by Alber \cite{Alber-1992}.

The method of Grad-Shafranov is in principle restricted to two dimensional settings or to problems which can be reduced to two dimensions using symmetries (for instance axisymmetric or toroidal symmetries). We  briefly describe the main idea behind in the particular situation of the two dimensional steady Euler equation although the method can be adapted to MHS. Due to the incompressibility condition, there exists a stream function $\psi$ such that the velocity field $v=\nabla^{\perp}\psi=(-\frac{\partial\psi}{\partial y},\frac{\partial\psi}{\partial x})$ and therefore equation \eqref{Euler2D:eq:omega} is given by
\begin{equation}\label{equation:GS:1}
\Delta \psi\cdot \nabla \psi=\nabla H,
\end{equation}
where $H=p+\frac{1}{2}\abs{v}^2$  is the Bernoulli function.  Hence, \eqref{equation:GS:1} implies that there exists a function $F$ such that $H=F(\psi)$ and then \eqref{equation:GS:1} yields also 
\begin{equation}\label{equation:GS:2}
\Delta \psi= F'(\psi) \mbox{ in } \Omega.
\end{equation}
Therefore the analysis of the steady Euler equation has been reduced to the study of the elliptic equation \eqref{equation:GS:2} for which a huge number of techniques are available. The essential difficulty regarding \eqref{equation:GS:2} is to determine the function $F$ from the boundary conditions. It turns out that this is possible for some of the boundary conditions collected in Table \ref{tab:table1}. Indeed, using that $v=\nabla^{\perp}\psi$ we have that $v\cdot n=\pm\frac{\partial \psi}{\partial s}$ where $s$ is the arc-length associated to the boundary. The sign depends on the orientation chosen for the curve. Hence, suppose by definiteness that  $H$ and $v\cdot n$ are known in the same subset of the boundary, say in $\partial\Omega_{-}$. Then we can determine $\psi$ in $\partial\Omega_{-}$ (up to an additive constant) and since $H=F(\psi)$ we can also obtain the function $F$. If the additional boundary condition imposed in $\partial\Omega_{+}$ gives enough information on $\psi$ to have a well-defined problem for \eqref{equation:GS:2}, we can then determine the function $\psi$ in $\Omega$ solving \eqref{equation:GS:2} with the boundary conditions obtained  for $\psi$ in $\partial\Omega_{-},\partial\Omega_{+}$. This will be the case for the problems \textit{(A)},\textit{(D)},\textit{(G)} in Table \ref{tab:table1}. 

Clearly, when applying this procedure some technicalities will arise in order have well-defined functions $F$ and uni-valued functions $\psi$. These issues will be considered in detail in Section \ref{S:2}. As stated above the main shortcoming of the Grad-Shafranov method is that its application is restricted to two dimensional settings. Nevertheless, an extension of the method to construct rotational solutions to the three dimensional steady Euler equation in an unbounded domain $(0,L)\times \mathbb{R}^2$ with periodic flows in the unbounded directions was recently treated in \cite{Buffoni-Wahlen-2019}. Employing the so called Clebsch variables the velocity is written as $v=\nabla f \times \nabla g$ to derive an elliptic non-linear system and perform a Nash-Moser scheme to solve it. Furthermore, ideas closely related to the method of Grad-Shafranov have been recently applied to study rigidity and flexibility properties solutions of the steady Euler equation in \cite{Hamel-Nadirashvili-2017,Hamel-Nadirashvili-2019,CDG-2020}.

An alternative method to obtain solutions with non-vanishing vorticity for the steady Euler equation \eqref{Euler2D:eq} was introduced in Alber \cite{Alber-1992}. More precisely, he studied the three dimensional version of the problem \textit{(A)} in Table \ref{tab:table1} which requires an additional boundary condition. In particular, he constructed solutions where the velocity field $v$ can be splitted into $v=v_0+V$ where $v_0$ is an irrotational solution to \eqref{Euler2D:eq} and $V$ a small perturbation. 
The boundary value problem for the Euler equations is reduced to a fixed point problem for a function $V$ combining the fact that the vorticity satisfies a suitable transport equation and that the velocity can be recovered from the vorticity using the Biot-Savart law. This idea will be discussed later in more detail.

Alber's method works in particular domains $\Omega$ satisfying a geometrical constraint relating $\partial\Omega$ and $v$. A key assumption that is needed is that if $x_{0}\in\partial\Omega$ and $v(x_{0})\cdot n(x_0)=0$, then the stream line of $v$ crossing through $x_0$ is completely contained on the boundary $\partial\Omega$.

 The boundary conditions prescribed in \cite{Alber-1992} for the three dimensional case are the normal component of the velocity field on the boundary, i.e. ($v\cdot n$ on $\partial\Omega$), as well as the normal component of the vorticity $\mbox{curl }v\cdot n$ and the Bernoulli function $p+\frac{\abs{v}^2}{2}$  on the inflow set $\partial \Omega_{-}=\partial\Omega\cap \{ v\cdot n<0\}$. A straightforward computation shows that the two boundary conditions imposed on the inflow set $\partial \Omega_{-}$, prescribe completely the vorticity in in  $\partial \Omega_{-}$. It is possible to determine the vorticity in any point of the domain $\Omega$ using the fact that it satisfies a first order differential equation by means of the  characteristics method.

Since Alber's result, there have been several generalizations and extensions. In \cite{Tang-Xin-2009}, the authors provide a modification of Alber's technique to construct solutions to the three dimensional steady Euler equation where the base flow does not have to satisfy Euler equation and the boundary conditions are given by $\mbox{curl }v=av+b$ on the inflow set $\partial\Omega$ for certain values of $a$ and $b$ satisfying compatibility conditions. Extensions of Alber's results to compressible flows with non-smooth domains have been obtained in \cite{Molinet-1999}. Solutions to the three dimensional steady Euler equation with boundaries meeting at right angles have been constructed in \cite{Seth-2020}. An illustrative example of this situation are curved pipes domains.

\subsection*{Main results: novelties and key ideas}
We describe here the main results and key ideas to construct solutions with non-vanishing vorticity for the two dimensional incompressible steady Euler equation \eqref{Euler2D:eq} with the different boundary value conditions collected on Table \ref{tab:table1}.

The Grad-Shafranov method allows to solve the boundary value problems \textit{(A)},\textit{(D)} and \textit{(G)}. 
The case \textit{(A)} has been already considered in \cite{Arnold-Khesin-1999, Seth-2016}. In this paper we will discuss the application of the Grad-Shafranov approach in Section \ref{S:2}. 

Hereafter, we will adapt the arguments of  Alber \cite{Alber-1992} to solve the boundary value problems \textit{(B)}, \textit{(C)} and  \textit{(G)}, in Table \ref{tab:table1}. As indicated above, the proof builds on a ground flow $v_{0}$ solving \eqref{Euler2D:eq} which is perturbed by function $V$  which will be determined by solving a fixed point of for a suitable operator $\Gamma$.  The idea to construct the operator $\Gamma$, relies on two building blocks: a transport type problem and a div-curl problem. 
The former consists in finding a unique function $\omega$ for a given $v$ and $\omega_{0}$ satisfying 
\begin{equation}\label{tp:omega}(\textit{TP})
\left\lbrace
\begin{array}{lll}
v\cdot\nabla \omega =0 \ \mbox{at} \ \Omega, \\
\omega = \omega_{0} \ \mbox{at} \ \partial \Omega_{-}.
\end{array} \right.
\end{equation} 
The value of $\omega_0$ is chosen in a particular way in order to get a solution which satisfies the boundary value conditions we want to deal with.

The second building block relies on finding  a unique $W$ which solves 
 \begin{equation}\label{div:curl:problem}(\textit{DCP})
\left\lbrace
\begin{array}{lll}
\nabla\times W= \omega, \ \mbox{at} \ \Omega \\
\mbox{div } W=0, \ \mbox{at} \ \Omega \\ 
W\cdot n = g, \   \mbox{ at } \partial \Omega 
\end{array} \right.
\end{equation} 
for a given $\omega$.
We will restrict ourselves to the very particular domains in \eqref{domain}. As indicated before, the main reason for that is that in general open domains $\Omega\subset \mathbb{R}^2$ with smooth connected boundary $\partial\Omega$ there are necessarily boundary points $x_{0}\in \partial\Omega$ such that $v(x_{0})\cdot n(x_0)=0$, which will be termed from now on as tangency points. Integrating by characteristics (assuming that the vector field $v$ is oriented in such a way that such problem is solvable), the solutions of \eqref{tp:omega} develop singularities in the derivatives that makes difficult to solve the combined problem \eqref{tp:omega}-\eqref{div:curl:problem} by means of a fixed point argument. In order to avoid this difficulty Alber restrict himself to smooth domains $\Omega$ with Lipschitz boundaries $\partial \Omega$ satisfying the following condition: if a vector field $v$ has a tangency point at $x_0$, then the whole stream line of $v$ crossing $x_0$ is contained on the boundary $\partial\Omega$ (see equations (1.22)-(1.23) in \cite{Alber-1992}).

A benefit of working with our domains \eqref{domain} is that they do not have tangency points and hence this difficulties can be ignored. As a drawback, since the domains \eqref{domain} are not simply connected, we need to impose topological constraints to our vector fields in order to have well defined problems. In particular, problem \eqref{div:curl:problem} cannot be reduced in general to a Laplace equation unless the flux of $W$ along a vertical line is zero. However, this can be achieved by adding a horizontal constant vector.

An important observation and difference with the work of Alber, is that we use Hölder spaces instead of Sobolev spaces to construct our solutions.  In the case treated by Alber, the vorticity $\omega_{0}$ at the boundary can be readily obtained from the boundary value given in the problem. However this is not the case for the problems \textit{(B)} and \textit{(C)}. In those cases $\omega_0$ is part of the solution that is obtained by means of the fixed point argument. More precisely, the value of $\omega_0$  is given in terms of $W$  and its derivatives at the boundary $\partial\Omega_{-}$, where $W$ solves \eqref{div:curl:problem}. If the estimates for $W$ are given in terms of the Sobolev spaces, we obtain less regularity 
for $\omega_0$ due to the classical Trace Theorem. The vorticity $\omega$ is now computed on the whole domain $\Omega$ using the transport equation \eqref{tp:omega} and this does not  give any gain of regularity for $\omega$. As a consequence when we recover a new function $W$ using again \eqref{div:curl:problem} which has less regularity that we had initially. Due to this it is not possible to close a fixed point argument. To solve this obstruction we make use of Hölder spaces where this difficulty vanishes. Furthermore, in the case \textit{(G)}, it is possible to obtain the vorticity $\omega_0$ in terms of the boundary values. However the normal veloctiy in $\partial\Omega_{+}$ it is not known and it must be obtained using a fixed point argument as in the cases describe above.

It is worth to notice that seemingly the case \textit{(D)} cannot be solved using Alber's method due to the fact that a loss of regularity takes place when one tries to reformulate the problem as a fixed point. In this case the lost of regularity is an essential difficulty that cannot fixed even with the use of Hölder spaces. See Section \ref {S:3:4} for a detailed explanation of this fact.

The case \textit{(E)} which is one of the boundary value problems suggested in the pioneering work of Grad and Rubin \cite{Grad-Rubin-1958}, does not seem amenable to any of the two methods indicated above and will be studied in a forthcoming work by means of completely different techniques.

Similarly, the case \textit{(F)}  seems to pose essential difficulties for both methods. Indeed, we cannot applied the Grad-Shafranov method since we do not know the value of $H$ and $v\cdot n$ at the same part of the boundary. On the other hand, we cannot apply Alber's method due to the loss of regularity similar as it happens in the case \textit{(D)}.
 
To the best of our knowledge several of the boundary value problems in the Table \ref{tab:table1} have not been studied in the scientific literature.  One of the main goals of this paper is to clarify which sets of boundary conditions yield well-posed problems.

\subsection*{Notation} We will use the following notation throughout the manuscript. We recall that we are working on a domain $\Omega= \mathbb{S}^1\times (0,L)$ with $L>0$.  Let $C_{b}(\Omega)$ be the set of bounded continuous functions on $\Omega.$ For any bounded continuous function and $0<\alpha<1$ we call $f$ uniformly Hölder continuous with exponent $\alpha$ in $\Omega$ if the quantity
$$ \left[ f \right]_{\alpha,\Omega}:= \displaystyle\sup_{x\neq y; x,y\in \overline{\Omega}} \frac{\abs{f(x)-f(y)}}{\abs{x-y}^\alpha}$$
is finite. However, this is just a semi-norm and hence in order to work with Banach spaces we define the space of Hölder continuous functions as 
$$ C^{\alpha}(\Omega)=\{ f\in C_{b}(\Omega): \norm{f}_{C^\alpha(\Omega)}< \infty\},$$
equipped with the norm
$$ \norm{f}_{C^{\alpha}(\Omega)}:=\displaystyle \sup_{x\in \overline{\Omega}}\abs{f(x)}+  \left[ f \right]_{\alpha,\Omega}.$$
Similarly, for any non-negative integer $k$ we define the Hölder spaces $C^{k,\alpha}(\Omega)$ as 
$$ C^{\alpha}(\Omega)=\{ f\in C^{k}_{b}(\Omega): \norm{f}_{C^{k,\alpha}(\Omega)}< \infty\},$$
equipped with the norm
$$ \norm{f}_{C^{k,\alpha}(\Omega)}=\displaystyle\max_{\abs{\beta}\leq k }\sup_{x\in \overline{\Omega}}\abs{\partial^{\beta}f(x)}+\displaystyle\sum_{\beta=k}\left[\partial^{\beta}f \right]_{\alpha,\Omega}.$$
Notice that in the definitions above the Hölder regularity holds up to the boundary, i.e in $\overline{\Omega}$.
We omit in the functional spaces whether we are working with scalars or vectors fields, this is $C^{k,\alpha}(\Omega,\mathbb{R})$ or $C^{k,\alpha}(\Omega,\mathbb{R}^2)$ and instead just write $C^{k,\alpha}(\Omega)$. Moreover, we will identify $\mathbb{S}^1$ with the interval $[0,1]$ and the functions $f\in C^{k,\alpha}(\mathbb{S}^1)$ , $k=0,1,2...$, $\alpha\in [0,1)$ with the functions $f\in C^{k,\alpha}([0,1])$  satisfying that $f^{\ell}(0)=f^{\ell}(1)$ for $ \ell=0,1,\dots,k$. Notice that this space of functions can also be identified with the space $f\in C^{k,\alpha}(\mathbb{R})$ such that $f(x+1)=f(x)$.

\subsection*{Plan of the paper}
In Section \ref{S:2} we show how to solve the boundary value cases \textit{(A)},\textit{(D)} and \textit{(G)} using the Grad-Shafranov approach. Next, in Section \ref{S:3} we introduce the vorticity transport method
and apply it to construct solutions to the steady Euler equations for the boundary value cases \textit{(B)},\textit{(C)} and \textit{(G)}. In the last section, Section \ref{S:4}, we translate the statements of the results  shown for the Euler equation \eqref{Euler2D:eq:omega} in the case of the MHS equations \eqref{MHS2D:current}. 

\section{The Grad-Shafranov approach}\label{S:2}
In this section we will use the Grad-Shafranov method to construct solutions with non-vanishing vorticity to the Euler equation for the boundary value problem \textit{(D)}. Although the method is also valid to tackle the case \textit{(G)}, we will give the details in that case using the fixed point method.  As explained in the introduction, the Grad-Shafranov approach reduces the existence problem for the Euler equation \eqref{Euler2D:eq} to the study of a simpler elliptic equation $ \Delta  \psi= F'(\psi)$ where $v=\nabla^{\perp}\psi$ and $F(\psi)$ is an unknown function related to the Bernoulli function $H$ that we need to determine using the boundary value conditions. 
\subsection{Boundary value problem  \textit{(D)}  for the steady Euler equation}\label{S:2.1}
\begin{theorem}\label{theorem:GS:1}
Let $f\in C^{1,\alpha}(\partial\Omega_{-}), h^{-}\in C^{1,\alpha}(\partial\Omega_{-})$, $h^{+}\in C^{1,\alpha}(\partial\Omega_{+})$ and $h^{+}=h^{-}\circ T$ where $T:\mathbb{S}^1\to \mathbb{S}^1$ is a given diffeomorphism with $C^{2,\alpha}$ regularity.  Then if $f>0$ for $x\in \partial \Omega_{-}$, there exists a solution $(v,p)\in C^{{1,\alpha}}(\Omega)\times C^{{1,\alpha}}(\Omega)$ solving the Euler equation \eqref{Euler2D:eq} such that 
\begin{equation}\label{boundary:value:type:GS}
v\cdot n =f \mbox{on} \ \partial \Omega_{-}, \ p+\frac{\abs{v}^2}{2}=h^{-}  \ \mbox{on} \ \partial \Omega_{-} \mbox{ and } p+\frac{\abs{v}^2}{2}=h^{+}  \ \mbox{on} \ \partial \Omega_{+}.
\end{equation}
Moreover, there exists $\delta>0$ such that if 
\begin{equation}\label{smallness:uniqueness:GS:1}
\norm{h^{-}}_{C^{{1,\alpha}}(\partial \Omega_{-})}+\norm{f}_{C^{{1,\alpha}}(\partial \Omega)}\leq  \delta ,
\end{equation}
the solution $(v,p)$ is unique. 
\end{theorem}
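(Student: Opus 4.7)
My plan is to carry out the Grad--Shafranov reduction outlined in the introduction. Introducing the stream function via $v=\nabla^{\perp}\psi$ makes incompressibility automatic and turns the vorticity form \eqref{Euler2D:eq:omega} into $\Delta\psi\,\nabla\psi=\nabla H$, so $H=F(\psi)$ for some scalar function $F$ and the Euler system reduces to the semilinear elliptic equation $\Delta\psi=F'(\psi)$ on $\Omega$. Because $f>0$ the net flux $c:=\int_0^1 f(x_1)\,dx_1$ is positive, so $\psi$ is multi-valued on $\mathbb{S}^1\times[0,L]$; I would therefore write $\psi(x_1,x_2)=-c\,x_1+\tilde\psi(x_1,x_2)$ with $\tilde\psi$ periodic in $x_1$ and work with the resulting single-valued problem on the fundamental strip.

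The next step is to extract both the Dirichlet data for $\psi$ and the nonlinearity $F$ from the boundary conditions. The relation $v\cdot n=f$ on $\partial\Omega_{-}$ integrates (after normalizing $\psi(0,0)=0$) to $\psi(x_1,0)=-F_0(x_1)$ with $F_0(x_1):=\int_0^{x_1}f(s)\,ds$. Since $f\in C^{1,\alpha}$ and $f>0$, $F_0$ is a $C^{2,\alpha}$ diffeomorphism of $\mathbb{R}$ satisfying $F_0(x_1+1)=F_0(x_1)+c$. The Bernoulli condition $H=h^{-}$ on $\partial\Omega_{-}$ then forces
\begin{equation*}
F(s):=h^{-}\bigl(F_0^{-1}(-s)\bigr),
\end{equation*}
which is $C^{1,\alpha}$ and, because $h^{-}$ is $1$-periodic while $F_0^{-1}(\sigma+c)=F_0^{-1}(\sigma)+1$, is periodic of period $c$. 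For the top boundary I would exploit the compatibility $h^{+}=h^{-}\circ T$: lifting $T$ to a monotone $\tilde T:\mathbb{R}\to\mathbb{R}$ with $\tilde T(x_1+1)=\tilde T(x_1)+1$ and declaring $\psi(x_1,L):=-F_0(\tilde T(x_1))$ gives $F(\psi(\cdot,L))=h^{+}$, respects the jump $-c$ across one period, and inherits $C^{2,\alpha}$ regularity from $T$.

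With $F$ and the boundary data in hand, I would solve the Dirichlet problem $\Delta\psi=F'(\psi)$ by a Schauder fixed-point argument. Subtracting the harmonic extension $\psi_{\mathrm{aff}}$ of the Dirichlet data reduces the problem to finding periodic $\Phi$ with $\Phi|_{\partial\Omega}=0$ and $\Delta\Phi=F'(\psi_{\mathrm{aff}}+\Phi)$; the map $\Gamma:\Phi\mapsto\Delta^{-1}F'(\psi_{\mathrm{aff}}+\Phi)$ sends $C^{0,\alpha}$ compactly into $C^{2,\alpha}$ and, because $F'$ is bounded globally by the periodicity of $F$, the maximum principle plus the Schauder estimate produce a closed ball invariant under $\Gamma$; Schauder's theorem then provides a fixed point, and one recovers $v=\nabla^{\perp}\psi$ and $p=F(\psi)-\tfrac12|v|^2$ in the claimed $C^{1,\alpha}$ spaces. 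For uniqueness I would use \eqref{smallness:uniqueness:GS:1} to confine any small solution into a ball of radius $O(\delta)$ in $C^{2,\alpha}$, on which the smallness of $F'$ and the Hölder difference quotients of the nonlinearity make $\Gamma$ a contraction in $C^{0,\alpha}$, giving uniqueness by the contraction mapping principle. The most delicate step I anticipate is the bookkeeping that couples the multi-valuedness of $\psi$, the periodicity of $F$ with period $c$, and the lift $\tilde T$: in particular, the non-uniqueness of $F^{-1}$ (whose branches differ by multiples of $c$) opens the door to additional large-data solutions, and the smallness assumption is precisely what isolates the canonical branch.
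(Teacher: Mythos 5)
Your reduction is exactly the paper's: the stream function $\psi_{-}(x)=\int_{0}^{x}f$, the profile $F=h^{-}\circ\psi_{-}^{-1}$ (periodic of period $J=\int_{0}^{1}f$ because $\psi_{-}^{-1}(\xi+J)=\psi_{-}^{-1}(\xi)+1$), the top Dirichlet datum $\psi_{+}=\psi_{-}\circ T$ built from the hypothesis $h^{+}=h^{-}\circ T$, and the periodization $\phi=\psi-Jx$ all appear verbatim (up to a sign convention) in the paper's proof. Where you genuinely diverge is in how the resulting semilinear Dirichlet problem $\Delta\phi=F'(Jx+\phi)$ is solved: the paper uses the direct method of the calculus of variations, minimizing $I[\phi]=\frac12\int_{\Omega}|\nabla\phi|^{2}+\int_{\Omega}F(Jx+\phi)$ over the admissible class and then upgrading the minimizer by Schauder regularity, whereas you run a Schauder fixed-point argument for $\Gamma(\Phi)=\Delta^{-1}F'(\psi_{\mathrm{aff}}+\Phi)$, with the invariant ball supplied by the global bound on $F'$ coming from the periodicity of $F$. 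Both routes work; the variational one gets existence essentially for free from the boundedness of $F$ and sidesteps mapping-property bookkeeping, while yours is more elementary but needs one repair: since $F'$ is only $C^{0,\alpha}$, its composition with a $C^{0,\alpha}$ function lies merely in $C^{0,\alpha^{2}}$, so $\Gamma$ does not map $C^{0,\alpha}$ into $C^{2,\alpha}$ as stated. You should instead close the fixed point in $C^{0}$ (or $C^{1}$), where $\|F'\|_{L^{\infty}}$ and $W^{2,p}$/$C^{1,\beta}$ estimates give a compact self-map of a ball, and only afterwards bootstrap the fixed point to $C^{2,\alpha}$ using that it is then Lipschitz. Your uniqueness argument under \eqref{smallness:uniqueness:GS:1} is the same contraction/Schauder-estimate computation the paper performs in \eqref{estimate:GS:problem1}--\eqref{estimate:GS:problem2}, and your closing observation about the branches of $\psi_{-}^{-1}$ and the role of $T$ matches the paper's remark that different choices of $T$ produce different solutions.
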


\begin{proof}
First we let $f(x)$ and $h^{-}(x)$ be extended periodically to the whole real line $\mathbb{R}$. Then we define $\psi_{-}(x)=\int_{0}^{x}f(s) \ ds$ and notice that the function $\psi(x)$ is invertible since $f>0$ in $\partial\Omega_{-}$, this is there exists a function $\xi$ such that $X(\xi)=\psi^{-1}_{-}(\xi)$ for all $\xi\in\mathbb{R}$. Moreover,
\begin{equation}\label{period:psi:inverse}
\psi_{-}(x+1)=\psi_{-}(x)+J, \ \mbox{ and } \psi_{-}^{-1}(\xi+J)=\psi_{-}^{-1}(\xi)+1,
\end{equation}
where $J=\int_{0}^{1} f(s) ds>0$. Next, we define the function 
 $F(\xi)=h^{-}(\psi^{-1}(\xi))$ for every $\xi\in\mathbb{R}$ which is a periodic function of period $J$. Indeed,
 $$F(\xi+J)=h^{-1}(\psi^{-1}_{-}(\xi+J))=h^{-1}(\psi^{-1}_{-}(\xi)+1)=h^{-1}(\psi^{-1}_{-}(\xi))=F(\xi),$$
 where we have used the periodicity of $h^{-1}$ and $\psi^{-1}_{-}$ in \eqref{period:psi:inverse}. Notice that the function $F\in C^{1,\alpha}$ since $f,h^{-}\in C^{1,\alpha}(\partial\Omega_{-})$. Finally the function $\psi_{+}(x):\mathbb{S}^1\to \mathbb{S}^1$ given by $\psi_{+}(x)=(\psi_{-}\circ T)(x)$. With these definitions and constructions at hand, we are interested in solving the following elliptic boundary value problem 
\begin{equation}\label{GS:problem1}
\left\lbrace
\begin{array}{lll}
\Delta \psi= F'(\psi), \ \mbox{in }  \Omega \\
\psi(x,0)=\psi_{-}(x), \mbox{on } \partial \Omega_{-}\\
\psi(x,L)=\psi_{+}(x), \ \mbox{on } \partial \Omega_{+} \\
\psi(0,y)=\psi(1,y)+J, \ \mbox{for } y\in(0,L).
\end{array} \right.
\end{equation} 
In order to obtain a minimization problem in the whole manifold $\mathbb{S}^{1}\times [0,L]$ we make the following change of variables  $\phi=\psi-Jx$ where the new function $\phi$ solves
\begin{equation}\label{GS:problem2}
\left\lbrace
\begin{array}{lll}
\Delta \phi= F'(Jx+\phi), \ \mbox{in } \Omega \\
\phi(x,0)=\psi_{-}(x) -Jx, \ \mbox{on } \partial \Omega_{-} \\
\phi(x,L)=\psi_{+}(x) -Jx, \ \mbox{on } \partial \Omega_{+} \\
\phi(1,y)=\phi(0,y), \ \mbox{for } y\in(0,L)
\end{array} \right.
\end{equation} 
The new functions $F'(Jx+\phi), \psi_{-}(x) -Jx, \psi_{+}(x)-Jx$  defined on the manifold $\mathbb{S}^{1}\times [0,L]$ are periodic. To show the existence of solutions to \eqref{GS:problem2} we use the classical variational calculus theory. To that purpose, we introduce the following energy functional
\begin{equation}\label{functional:I}
I[\phi]= \frac{1}{2}\int_{\Omega} \abs{\nabla \phi}^{2}+ \int_{\Omega} F(Jx+\phi) \ dx,  
\end{equation}
the admissible space of functions
\begin{equation}
\mathcal{A}=\{ \phi\in H^{1}(\Omega): \phi(x,0)=\psi_{-}(x) -Jx \ \mbox{on } \partial \Omega_{-}, 
\phi(x,L)=\psi_{+}(x) -Jx \ \mbox{on } \partial \Omega_{+} \ \mbox{in the trace sense}\},
\end{equation}
and set 
\begin{equation}\label{minimizer}
\bar{\phi}=\mbox{arg min}\{ I[\phi]: \phi\in \mathcal{A}\}. 
\end{equation}
It is well-known that this minimizing problem has at least one solution and moreover that it is a weak solution to \eqref{GS:problem2} which verify also the boundary conditions in the trace sense, see \cite{Evans2010}. Moreover, since $F\in C^{1,\alpha}$ and $\psi_{-},\psi_{+}\in C^{2,\alpha}$, an application of standard elliptic regularity theory in Hölder spaces shows that $\phi\in C^{2,\alpha}(S^1\times [0,L])$, (cf. \cite{Gilbarg-Trudinger-2001}). Therefore, by construction we have that 
$$v=\nabla^{\perp} \psi \in C^{1,\alpha}(\Omega) \mbox{ and }p=F(\psi)-\frac{\abs{\nabla^{\perp}\psi}^{2}}{2}\in C^{1,\alpha}(\Omega),$$ 
solves the Euler equation \eqref{Euler2D:eq} with boundary value conditions \eqref{boundary:value:type:GS} concluding the proof. To show uniqueness, let $\phi^1$ and $\phi^2$ be two different solutions to \eqref{GS:problem2} and set $\widehat{\phi}=\phi^1-\phi^2$. Then we have that 
\begin{equation}\label{GS:problem3}
\left\lbrace
\begin{array}{lll}
\Delta \widehat{\phi}= F'(Jx+\phi^1)- F'(Jx+\phi^2), \ \mbox{in }  \Omega\\
\widehat{\phi}(x,0)=0,\ \mbox{on } \partial \Omega_{-} \\
\widehat{\phi}(x,L)=0, \ \mbox{on } \partial \Omega_{+}  \\
\widehat{\phi}(1,y)=\widehat{\phi}(0,y), \ \mbox{for } y\in(0,L).
\end{array} \right.
\end{equation} 
Applying classical regularity theory for elliptic problems (cf. \cite{Gilbarg-Trudinger-2001}) we have that
\begin{equation}\label{estimate:GS:problem1}
\norm{\widehat{\phi}}_{C^{2,\alpha}(\Omega)}\leq C \norm{F'(Jx+\phi^1)- F'(Jx+\phi^2)}_{C^{\alpha}(\Omega)}.
\end{equation}
Using the smallness condition on $h^{-}, f$ in \eqref{smallness:uniqueness:GS:1} and the fact that $F= h^{-}\circ\psi^{-1}$ we infer that
\begin{equation}\label{estimate:GS:problem2}
\norm{F'(Jx+\phi^1)- F'(Jx+\phi^2)}_{C^{\alpha}(\Omega)} \leq C \delta \norm{\widehat{\phi}}_{C^{2,\alpha}(\Omega)},
\end{equation}
and hence combining estimates \eqref{estimate:GS:problem1}-\eqref{estimate:GS:problem2} yields $\widehat{\phi}=0$, i.e. $ \phi^1=\phi^2$.
\end{proof}
\begin{remark}
We should notice that we only need that the normal component of $v$ is different than zero at the inflow boundary $\partial\Omega_{-}$ and hence the vector field $v$ could have generally null points. On the other hand, we should remark that the diffeomorphism $T$ does not have to be unique for certain elections of the functions $h^{-}$ and $h^{+}$ , and therefore for each $T$ we would have a different solution $(v,p)$. Indeed, we can have two different diffeomorphism $T$ such that $T(A)=\hat{A}, T(B)=\hat{B}$ or $T(A)=\hat{B}, T(B)=\hat{A}$ as in Figure \ref{Figura1}.
\begin{figure}[h]
\includegraphics[width=10cm]{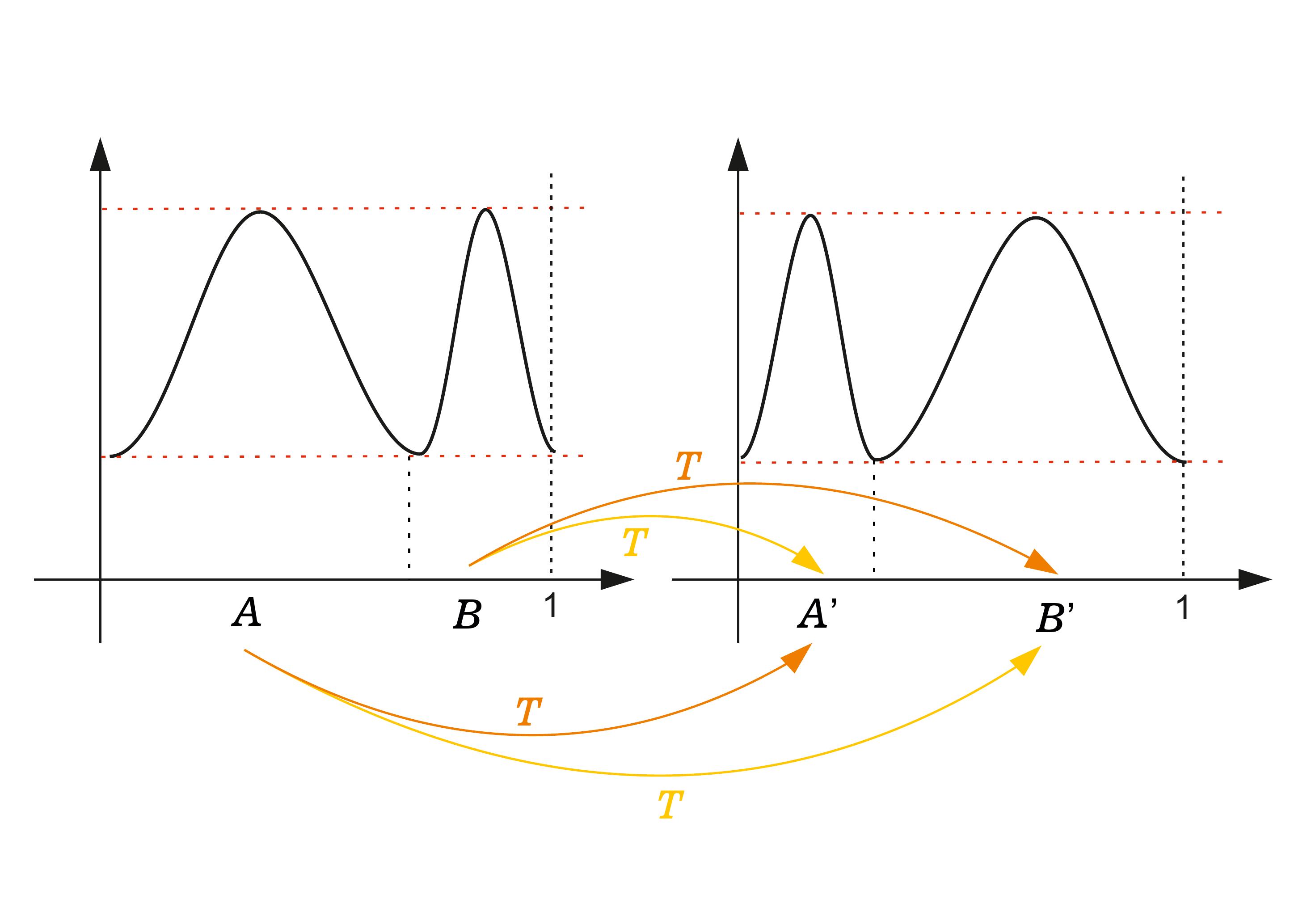}
\caption{Different diffeomorphism $T$}
\label{Figura1}
\end{figure}

\end{remark}

\begin{remark}[Boundary value problem \textit{(G)} using Grad-Shafranov] We give here a brief explanation on how to modify the arguments above in order to treat the boundary value problem \textit{(G)}. However since this case will be tackled later with the vorticity transport method (cf. Subsection \ref{S:3:3}) we do not provide the full details but the idea behind. The statement of the result reads:
\begin{theorem}\label{theorem:GS:2}
Let $f\in C^{1,\alpha}(\partial\Omega_{-}), h^{-}\in C^{1,\alpha}(\partial\Omega_{-})$, $h^{+}\in C^{1,\alpha}(\partial\Omega_{+})$. Then there exists a constant $\delta>0$ such that if 
\begin{equation}\label{smallnesss:GS:2}
\norm{h^{-}}_{C^{{1,\alpha}}(\partial \Omega_{-})}+\norm{h^{+}}_{C^{{1,\alpha}}(\partial \Omega_{+})}+\norm{f}_{C^{{1,\alpha}}(\partial \Omega_{-})}\leq  \delta,
\end{equation}
a unique solution $(v,p)\in C^{{1,\alpha}}(\Omega)\times C^{{1,\alpha}}(\Omega)$ solving the Euler equation \eqref{Euler2D:eq} such that 
\begin{equation}
v\cdot n =1+f \ \mbox{on} \ \partial \Omega_{-}, \ p+\frac{\abs{v}^2}{2}=h^{-}  \ \mbox{on} \ \partial \Omega_{-} \mbox{ and } p=h^{+}  \ \mbox{on} \ \partial \Omega_{+}.
\end{equation}
\end{theorem}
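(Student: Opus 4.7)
The strategy would be to adapt the Grad--Shafranov reduction used in the proof of Theorem \ref{theorem:GS:1}, the crucial new feature being that the condition $p=h^{+}$ on $\partial\Omega_{+}$ does not provide a Dirichlet datum for the stream function but leads instead to a non-linear Robin--type condition. The construction on $\partial\Omega_{-}$ would be identical to the one used for Theorem \ref{theorem:GS:1}: from $v\cdot n=1+f$ one obtains $\psi_{-}(x):=\int_{0}^{x}(1+f(s))\,ds$, which by the smallness \eqref{smallnesss:GS:2} is strictly increasing and satisfies $\psi_{-}(x+1)=\psi_{-}(x)+J$ with $J:=1+\int_{0}^{1}f(s)\,ds$, while $H=F(\psi)$ together with $H=h^{-}$ determines the $J$--periodic profile $F(\xi):=h^{-}(\psi_{-}^{-1}(\xi))$. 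Using $p=F(\psi)-\tfrac12|\nabla\psi|^{2}$, the top condition then translates into the non-linear trace equation
\begin{equation*}
F\bigl(\psi(x,L)\bigr)-\tfrac{1}{2}\bigl|\nabla\psi(x,L)\bigr|^{2}=h^{+}(x)\quad\text{on }\partial\Omega_{+}.
\end{equation*}

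After the substitution $\phi:=\psi-Jx$, which converts the problem into one for an $x$--periodic unknown on $\mathbb{S}^{1}\times[0,L]$, we would construct the solution via a Banach fixed--point iteration in $C^{2,\alpha}$. For every candidate top trace $g\in C^{2,\alpha}(\mathbb{S}^{1})$ of small norm, let $\phi[g]$ denote the unique Schauder solution (cf.~\cite{Gilbarg-Trudinger-2001}) of the semilinear Dirichlet problem $\Delta\phi=F'(Jx+\phi)$ in $\Omega$ with $\phi(x,0)=\psi_{-}(x)-Jx$ on $\partial\Omega_{-}$ and $\phi(x,L)=g(x)$ on $\partial\Omega_{+}$, for which the estimate $\norm{\phi[g]}_{C^{2,\alpha}(\overline{\Omega})}\leq C\bigl(\norm{g}_{C^{2,\alpha}(\mathbb{S}^{1})}+\delta\bigr)$ follows from standard elliptic theory. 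The non-linear top condition, viewed as an equation for an updated trace,
\[
F\bigl(Jx+\mathcal{N}(g)\bigr)-\tfrac{1}{2}\bigl|(J,0)+\nabla\phi[g](x,L)\bigr|^{2}=h^{+}(x),
\]
defines a map $\mathcal{N}$ which, under \eqref{smallnesss:GS:2}, should be invertible by the implicit function theorem and contractive on a small ball of $C^{2,\alpha}(\mathbb{S}^{1})$; its fixed point would produce $\psi\in C^{2,\alpha}(\overline{\Omega})$ and hence $(v,p)\in C^{1,\alpha}(\Omega)\times C^{1,\alpha}(\Omega)$.

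The main obstacle is precisely the non-linear top condition, which involves $|\nabla\psi|^{2}$, i.e.\ the full gradient trace of $\psi$ on $\partial\Omega_{+}$. Attempting to run the variational argument of Theorem \ref{theorem:GS:1} in $H^{1}$ would cost a derivative when taking traces of $\nabla\psi$ and prevent one from closing the iteration; Hölder spaces $C^{k,\alpha}$ are therefore the natural functional setting, in full agreement with the motivation for their use in the vorticity--transport approach of Section \ref{S:3}. Since the boundary value problem \textit{(G)} will be solved by the vorticity transport method in Subsection \ref{S:3:3}, we limit ourselves to the outline above rather than carrying out the contraction estimates in detail.
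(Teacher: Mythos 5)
Your reduction is exactly the one the paper sketches for this case: determine $\psi_{-}$ and the $J$-periodic profile $F=h^{-}\circ\psi_{-}^{-1}$ from the data on $\partial\Omega_{-}$, and recast $p=h^{+}$ on $\partial\Omega_{+}$ as the nonlinear trace condition $F(\psi)-\tfrac12|\nabla\psi|^{2}=h^{+}$. The paper (which only gives a sketch here, deferring the complete proof of case \textit{(G)} to the vorticity transport method of Subsection \ref{S:3:3}) likewise proposes to perturb around $\psi_{0}=x$ and close a fixed point under the smallness assumption, and your observation that the $H^{1}$ variational argument of Theorem \ref{theorem:GS:1} cannot be reused because of the gradient trace is also correct. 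Up to that point you are aligned with the intended argument.

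However, the concrete iteration you set up has a genuine gap. You define the updated top trace $\mathcal{N}(g)$ by solving $F\bigl(Jx+\mathcal{N}(g)\bigr)=h^{+}(x)+\tfrac12\bigl|(J,0)+\nabla\phi[g](x,L)\bigr|^{2}$, i.e.\ by inverting $F$. This cannot work: $F=h^{-}\circ\psi_{-}^{-1}$ satisfies $\norm{F}_{C^{1,\alpha}}\leq C\norm{h^{-}}_{C^{1,\alpha}}=O(\delta)$ and is $J$-periodic, so it is nowhere invertible in the required sense; worse, the equation has no solution at all for small data, since its left-hand side is $O(\delta)$ while its right-hand side is $\tfrac12 J^{2}+O(\delta)$ near the base state. (This also exposes a normalization that must be fixed: as in Theorem \ref{Theorem:G:Euler}, the condition on $\partial\Omega_{+}$ has to be read as $p=p_{0}+h^{+}$ with $p_{0}$ the base pressure, otherwise the divergence theorem forces $|v|^{2}$ to be of order one somewhere on $\partial\Omega_{+}$ and the statement is vacuous.) The correct linearization is not in $F$ but in the gradient term: writing $\psi=x+\tilde\phi$ with $\tilde\phi$ small, one has $\tfrac12|\nabla\psi|^{2}=\tfrac12+\partial_{x}\tilde\phi+O(|\nabla\tilde\phi|^{2})$, so to leading order the top condition prescribes $\partial_{x}\tilde\phi(\cdot,L)$ --- that is, the unknown normal velocity $v\cdot n=\partial_{x}\psi$ on $\partial\Omega_{+}$ --- up to quadratic and $O(\delta)$ corrections. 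The fixed point must therefore be run with a Neumann/oblique-derivative datum on $\partial\Omega_{+}$, or equivalently with the unknown top flux as part of the iterate, which is precisely how the paper treats case \textit{(G)} in Theorem \ref{Theorem:G:Euler}. With that modification your outline closes, but as written the key step fails.
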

Mimicking the same argument as in the proof of Theorem \ref{theorem:GS:1}, we have reduced the existence of solutions to the following boundary value problem 
\begin{equation}\label{GS:problem:3:G}
\left\lbrace
\begin{array}{lll}
\Delta \psi= F'(\psi), \ \mbox{in }  \Omega \\
\psi(x,0)=\psi_{-}(x), \ \mbox{on } \partial \Omega_{-} \\
h^{+}+\frac{\abs{\nabla^{\perp}\psi}^{2}}{2}= F(\psi), \ \mbox{on } \partial \Omega_{+} \\
\psi(0,y)=\psi(1,y)+J, \ \mbox{for } y\in(0,L)
\end{array} \right.
\end{equation} 
where $\psi_{-}(x)=\int_{0}^{x}f(s) \ ds$, $F(\xi)=h^{-}(\psi^{-1}(\xi))$ for every $\xi\in\mathbb{R}$. Due to the nonlinear character of the boundary condition on $\partial\Omega_{+}$ it is not a priori clear if the problem \eqref{GS:problem:3:G} can be solved for arbitrary functions $\psi_{-},F$ and $ h_{+}$. However, under the smallness assumption \eqref{smallnesss:GS:2}, we can linearize the equation $h^{+}+\frac{\abs{\nabla^{\perp}\psi}^{2}}{2}= F(\psi) \ \mbox{ on } \partial \Omega_{+}$ for a suitable perturbation $\psi=\psi_{0}+\delta \phi$ where $\psi_{0}=x$ and $\delta\ll 1$ and solve the resulting problem by means of a fixed point argument. Actually analogous perturbative arguments will be applied recurrently in the rest of the paper. 
\end{remark}

\begin{remark}[Boundary value problem \textit{(A)} using Grad-Shafranov]
In the case of the boundary value problem \textit{(A)}, the construction of solutions to the Euler equation \eqref{Euler2D:eq} reduces also to the study of an elliptic equation which can be treated with classical calculus of variations tools. This case has been studied before in the literature in \cite{Arnold-Khesin-1999, Seth-2016} and we would not provide more details here. 

\begin{theorem}\label{theorem:GS:4}
Let $f^{-}\in C^{1,\alpha}(\partial\Omega_{-}), f^{+}\in C^{1,\alpha}(\partial\Omega_{+})$ and $h^{-}\in C^{1,\alpha}(\partial\Omega_{-})$. Then if $f^{-}>0$ for $x\in \partial \Omega_{-}$, there exists a solution $(v,p)\in C^{{1,\alpha}}(\Omega)\times C^{{1,\alpha}}(\Omega)$ solving the Euler equation \eqref{Euler2D:eq} such that 
\begin{equation}
v\cdot n =f^{-} \ \mbox{on } \ \partial \Omega_{-}, v\cdot n =f^{+} \ \mbox{on} \ \partial \Omega_{+} \mbox{ and }  p+\frac{\abs{v}^2}{2}=h^{-}  \ \mbox{on} \ \partial \Omega_{-}.
\end{equation}
Moreover, there exists $\delta>0$ such that if 
\begin{equation}
\norm{f^-}_{C^{{1,\alpha}}(\partial \Omega_{-})}+\norm{f^+}_{C^{{1,\alpha}}(\partial \Omega_{+})} +\norm{h^{-}}_{C^{{1,\alpha}}(\partial \Omega_{-})}\leq  \delta ,
\end{equation}
the solution $(v,p)$ is unique. 
\end{theorem}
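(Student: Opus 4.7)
The argument follows the same template as the proof of Theorem \ref{theorem:GS:1}, with the boundary data reinterpreted. First I would introduce the stream function $\psi$ with $v = \nabla^{\perp}\psi$ and translate the prescribed fluxes into Dirichlet data for $\psi$: setting
\[
\psi_{-}(x) = \int_{0}^{x} f^{-}(s)\,ds, \qquad \psi_{+}(x) = \int_{0}^{x} f^{+}(s)\,ds,
\]
the condition $v\cdot n = f^{\pm}$ on $\partial \Omega_{\pm}$ is encoded up to a global additive constant, which can be fixed by a convenient normalization such as $\psi_{+}(0)=0$. The incompressibility of $v$ together with the divergence theorem on $\Omega$ requires the compatibility $\int_{0}^{1} f^{-}(s)\,ds = \int_{0}^{1} f^{+}(s)\,ds =: J$, which is implicit in the statement and guarantees that both boundary traces of the stream function have the same monodromy as one winds around $\mathbb{S}^{1}$.

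Second, I would identify the generating function $F$ of the Grad--Shafranov reduction. Since $f^{-} > 0$, the function $\psi_{-}$ is strictly increasing and invertible, and defining $F(\xi) := h^{-}(\psi_{-}^{-1}(\xi))$ produces a $J$-periodic function of class $C^{1,\alpha}$ for which $H = p + \tfrac{1}{2}|v|^{2} = F(\psi)$ holds on $\partial \Omega_{-}$. By the standard Grad--Shafranov argument recalled in the introduction, the Euler system then reduces to the scalar elliptic problem
\[
\Delta \psi = F'(\psi) \text{ in } \Omega, \qquad \psi|_{\partial\Omega_{\pm}} = \psi_{\pm}, \qquad \psi(x+1,y) = \psi(x,y) + J.
\]
I would solve this exactly as in the proof of Theorem \ref{theorem:GS:1}: substitute $\phi = \psi - Jx$ to obtain a fully $\mathbb{S}^{1}$-periodic problem on $\mathbb{S}^{1}\times[0,L]$, minimize the functional
\[
I[\phi] = \frac{1}{2}\int_{\Omega} |\nabla \phi|^{2}\,dx + \int_{\Omega} F(Jx + \phi)\,dx
\]
over the affine space of $H^{1}$ functions matching $\psi_{\pm}(x) - Jx$ on $\partial \Omega_{\pm}$, and invoke classical Schauder regularity (using $F \in C^{1,\alpha}$ and $\psi_{\pm} \in C^{2,\alpha}$) to upgrade the minimizer to a $C^{2,\alpha}$ classical solution. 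Then $v = \nabla^{\perp}\psi$ and $p = F(\psi) - \tfrac{1}{2}|v|^{2}$ are both of class $C^{1,\alpha}$ and satisfy the prescribed boundary conditions by construction.

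For the uniqueness part, I would copy the argument at the end of the proof of Theorem \ref{theorem:GS:1}: the difference $\widehat{\phi} = \phi^{1} - \phi^{2}$ of two solutions (sharing the same normalization of $\psi_{+}$) satisfies a linear elliptic equation with zero Dirichlet data, and the Schauder estimate combined with the smallness of the data gives $\|\widehat{\phi}\|_{C^{2,\alpha}} \leq C\delta\,\|\widehat{\phi}\|_{C^{2,\alpha}}$, which forces $\widehat{\phi} \equiv 0$ once $\delta$ is small enough. The only genuinely new ingredient compared to case \textit{(D)} is the derivation of both Dirichlet data directly from the fluxes $f^{\pm}$ (together with the implicit flux-matching condition) and the verification that $F$ is well-defined and lies in $C^{1,\alpha}$; I expect this to be the only mild obstacle, the remaining steps being routine adaptations of the case \textit{(D)} argument.
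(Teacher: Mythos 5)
Your proposal is correct, and it is essentially the intended argument: the paper itself gives no proof of Theorem \ref{theorem:GS:4}, remarking only that the problem reduces to an elliptic equation treatable by the calculus of variations and deferring to \cite{Arnold-Khesin-1999, Seth-2016}, and your write-up is precisely the natural adaptation of the proof of Theorem \ref{theorem:GS:1} (Dirichlet data for $\psi$ on both boundary components coming from $f^{\pm}$, the function $F=h^{-}\circ\psi_{-}^{-1}$ built on $\partial\Omega_{-}$, the periodizing substitution $\phi=\psi-Jx$, minimization of $I[\phi]$, Schauder regularity, and the contraction estimate for uniqueness). One point deserves emphasis: the relative additive constant between $\psi_{-}$ and $\psi_{+}$, which you fix by the normalization $\psi_{+}(0)=0$, is equivalent to prescribing the flux $\int_{\mathcal{C}}v\cdot n\,dS$ across a vertical cut and is genuinely \emph{not} determined by the boundary data of the theorem: taking $f^{\pm}\equiv\epsilon$ and $h^{-}\equiv 0$, every $v=(-c/L,\epsilon)$ with $c\in\mathbb{R}$ (and constant $p$) satisfies all the stated boundary conditions. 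Hence the uniqueness assertion can only hold modulo this one-parameter family, i.e.\ after fixing the flux, which your parenthetical ``sharing the same normalization of $\psi_{+}$'' correctly anticipates; this is consistent with the fact that the fixed-point theorems elsewhere in the paper impose $\int_{\mathcal{C}}v\cdot n\,dS=J$ as an explicit additional condition, and with the fact that in Theorem \ref{theorem:GS:1} the analogous constant is fixed by the prescribed diffeomorphism $T$.
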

\end{remark}

\section{The vorticity transport method}\label{S:3}
In this section we will apply the fixed point method approach to construct non-vanishing solutions to the Euler equation for the boundary value problems \textit{(B)},\textit{(C)} and \textit{(G)}. In order to avoid repetition we will show cases \textit{(B)} and \textit{(C)} in full detail and just provide a sketch of the proof   for case \textit{(G)}  highlighting the main differences.

\subsection{Boundary value problem  \textit{(B)}  for the steady Euler equation }\label{S:31}
We will construct solutions to the Euler equation with boundary conditions \textit{(B)} using a suitable modification of the vorticity transport method introduced by Alber \cite{Alber-1992}. Let us state the result precisely:
\begin{theorem}\label{Th:case2}
 Let $\Omega=\{ (x,y)\in \mathbb{S}^{1}\times (0,L)\}$, with $L>0$ and $\alpha\in (0,1)$.  Suppose that $(v_0,p_0)\in C^{{2,\alpha}}(\Omega)\times C^{{2,\alpha}}(\Omega)$ is a solution of \eqref{Euler2D:eq} with $\bar{v}^{2}_{0}=\displaystyle\inf_{(x,y)\in\bar{\Omega}} \abs{v^2_{0}(x,y)}> 0$ and $\mbox{curl }v_0=0$. For $\mathcal{C}= \{(0,y), y\in[0,L] \}$ we have that the integral $\int_{\mathcal{C}} (v_{0}\cdot n) \ dS$ is a real  constant that we will denote as $J_0$. There exist $\epsilon>0$ , $M>0$ sufficiently small as well as $K>0$ such that for $v_0$ as above with $ \norm{v^1_0}_{C^{2,\alpha}(\Omega)}\leq\epsilon $ and 
$h\in C^{{2,\alpha}}(\partial \Omega_{-})$, $f\in C^{{2,\alpha}}(\partial \Omega)$ and $J\in \mathbb{R}$  satisfying 
\begin{equation}\label{smallnes:bound}
\norm{h}_{C^{{2,\alpha}}(\partial \Omega_{-})}+\norm{f}_{C^{{2,\alpha}}(\partial \Omega)}+ \abs{J-J_0} \leq KM ,
\end{equation} 
and 
\begin{equation}\label{f:condition}
\int_{\partial\Omega_{-}}f \ dS=  \int_{\partial\Omega_{+}}f \ dS,
\end{equation}
there exists a unique $(v,p)\in C^{{2,\alpha}}(\Omega)\times C^{{2,\alpha}}(\Omega)$ to \eqref{Euler2D:eq}  with $\norm{v-v_0}_{C^{2,\alpha}(\Omega)}\leq M$ such that
\begin{equation}\label{boundary:value:type1}
v\cdot n =v_{0}\cdot n+f \ \mbox{on} \ \partial \Omega, \ p=p_0+h  \ \mbox{on} \ \partial \Omega_{-} \mbox{ and } \int_{\mathcal{C}}v\cdot n \ dS= J.
\end{equation}
The constants $M,K,$ as well as $\epsilon,$ depend  only on $\alpha,L, \bar{v}^{2}_{0}$.
\end{theorem}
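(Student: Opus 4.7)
The plan is to write $v = v_0 + V$ and find the perturbation $V$ in a ball of radius $M$ in $C^{2,\alpha}(\Omega)$ as the unique fixed point of an operator $\Gamma$ built from the two building blocks \eqref{tp:omega} and \eqref{div:curl:problem}, following the vorticity transport strategy outlined in the introduction. The pressure will be recovered a posteriori from $v$ up to a constant, which is used to adjust the value of $p$ on $\partial\Omega_-$ to match the prescribed $p_0+h$.

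Given a candidate $V$, I would define $\tilde V = \Gamma(V)$ in four steps. First, on $\partial\Omega_-$ the boundary condition $p=p_0+h$ together with $H_0 = p_0 + \tfrac{1}{2}\abs{v_0}^2 = \mbox{const}$ (since $v_0$ is irrotational) gives $H|_{\partial\Omega_-} = H_0 + h + v_0\cdot V + \tfrac{1}{2}\abs{V}^2$, and projecting the Euler identity $v\times \omega = \nabla H$ onto the boundary tangent forces
\[
\omega_0 = -\frac{\partial_x H|_{\partial\Omega_-}}{v^2|_{\partial\Omega_-}},
\]
which is well defined since $\inf\abs{v^2}\geq \bar v_0^2 - \norm{V}_{\infty}>0$ for $M$ small. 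Second, I would solve the transport problem \eqref{tp:omega} for $v$ with inflow datum $\omega_0$ by characteristics: because $v_0^2$ is bounded away from zero and $v_0^1 = O(\epsilon)$, streamlines of $v$ are close to vertical, there are no tangency points on $\partial\Omega$, and each streamline connects $\partial\Omega_-$ to $\partial\Omega_+$, so $\omega\in C^{1,\alpha}(\Omega)$ is well defined globally. Third, I would solve the div-curl problem \eqref{div:curl:problem} with this $\omega$ and normal data $g = v_0 \cdot n + f$; the compatibility \eqref{f:condition} makes the Neumann data admissible, and because $\Omega$ is not simply connected the solution is pinned down only after prescribing the flux $\int_{\mathcal{C}} \tilde W \cdot n \, dS = J$, which I would enforce by writing $\tilde W = \nabla^\perp \psi + (c,0)$ with $-\Delta\psi = \omega$, Dirichlet data for $\psi$ obtained by integrating $g$ along $\partial\Omega_\pm$, and the constant $c$ fixed by the flux. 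Finally, set $\Gamma(V) = \tilde W - v_0$.

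The crux is the regularity balance, which is precisely why Hölder scales rather than Sobolev ones are used. For $V\in C^{2,\alpha}(\Omega)$, the boundary Bernoulli trace lies in $C^{2,\alpha}(\partial\Omega_-)$, so after one tangential derivative $\omega_0\in C^{1,\alpha}(\partial\Omega_-)$; transport along $C^{2,\alpha}$ characteristics preserves this, giving $\omega\in C^{1,\alpha}(\Omega)$; and Schauder estimates for $-\Delta\psi = \omega$ restore two derivatives, so $\tilde W\in C^{2,\alpha}(\Omega)$, closing the loop. Combining these linear bounds with the quadratic terms $v_0\cdot V + \tfrac{1}{2}\abs{V}^2$ at the boundary and the Lipschitz dependence of the characteristic flow on $v$, I would show that $\Gamma$ maps the ball $\{\norm{V}_{C^{2,\alpha}(\Omega)}\leq M\}$ into itself under \eqref{smallnes:bound}, and that
\[
\norm{\Gamma(V_1) - \Gamma(V_2)}_{C^{2,\alpha}(\Omega)} \leq C(\epsilon + M)\norm{V_1 - V_2}_{C^{2,\alpha}(\Omega)}.
\]
Shrinking $M$ and $\epsilon$ gives a contraction and hence existence and uniqueness of the fixed point by Banach's theorem.

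It remains to verify that the fixed point truly solves \eqref{Euler2D:eq} with \eqref{boundary:value:type1}. By construction $\nabla\cdot v = 0$, $\nabla\times v = \omega$ and $v\cdot\nabla\omega = 0$; in two dimensions this last identity forces $\omega = F(\psi)$ along the stream function, so $v\times\omega = -F(\psi)\nabla\psi$ is a gradient and produces a pressure $p$ (unique up to an additive constant) via $H = -G(\psi)$ with $G'=F$ and $p = H - \tfrac{1}{2}\abs{v}^2$. On $\partial\Omega_-$ the Euler relation $\partial_x H = -\omega v^2$ together with the prescription of $\omega_0$ yields $\partial_x[H - (p_0 + h + \tfrac{1}{2}\abs{v}^2)] = 0$, hence $p - (p_0 + h)$ is constant on $\partial\Omega_-$ and can be absorbed into the free additive constant in $p$, producing $p = p_0 + h$ there. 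The conditions $v \cdot n = v_0\cdot n + f$ on $\partial\Omega$ and the flux $J$ are built into the div-curl step. The main obstacle I anticipate is precisely the regularity bookkeeping outlined above, together with the careful handling of the flux $J$ forced by the non-trivial topology of the cylinder; all other ingredients are essentially classical.
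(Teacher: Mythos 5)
Your overall strategy coincides with the paper's: the same decomposition $v=v_0+V$, the same reconstruction of the inflow vorticity $\omega_0$ on $\partial\Omega_{-}$ from the tangential component of the Bernoulli relation (your formula $\omega_0=\partial_x H/v^2$ with $H=H_0+h+v_0\cdot V+\tfrac12\abs{V}^2$ becomes, after using $\partial_x v_0^2=\partial_y v_0^1$ and $V^2=f$ on $\partial\Omega_{-}$, exactly the expression \eqref{omega:o:type1} up to sign conventions), the same transport plus div--curl building blocks with the flux along $\mathcal{C}$ used to pin down the harmonic part on the cylinder, and the same a posteriori recovery of the pressure with the free additive constant adjusted on $\partial\Omega_{-}$.

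There is, however, one step that fails as written: the claimed contraction estimate $\norm{\Gamma(V_1)-\Gamma(V_2)}_{C^{2,\alpha}(\Omega)}\leq C(\epsilon+M)\norm{V_1-V_2}_{C^{2,\alpha}(\Omega)}$. The difference of the two transported vorticities satisfies $(v_0+V_2)\cdot\nabla\widehat{\omega}=-\widehat{V}\cdot\nabla\omega_1$ with $\widehat{V}=V_1-V_2$, so any estimate for $\widehat{\omega}$ consumes one derivative of $\omega_1$. Since $\omega_0$ lies only in $C^{1,\alpha}(\partial\Omega_{-})$ (it already contains one tangential derivative of the $C^{2,\alpha}$ data), one can control $\widehat{\omega}$ in $C^{\alpha}$ but not in $C^{1,\alpha}$, and hence, after the Schauder step, $\Gamma(V_1)-\Gamma(V_2)$ only in $C^{1,\alpha}$. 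This derivative loss for differences of solutions of transport equations driven by different velocity fields is unavoidable, so the contraction cannot close in the $C^{2,\alpha}$ norm. The paper's remedy is a two-norm argument: prove the invariance $\Gamma(B_M)\subset B_M$ in $C^{2,\alpha}$, prove the contraction only with respect to the weaker $C^{1,\alpha}$ distance, and observe via Arzel\`a--Ascoli that the closed $C^{2,\alpha}$ ball $B_M$ is a complete metric space for the $C^{1,\alpha}$ metric, so Banach's fixed point theorem still applies and the fixed point retains $C^{2,\alpha}$ regularity. Replacing your contraction claim by this argument, the rest of your outline (including the verification that the fixed point yields a univalued pressure, which amounts to checking $\int_0^1 (v\cdot\nabla)v^1\,dx=0$ on $\partial\Omega_{-}$ by periodicity of $h+p_0$) goes through.
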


\begin{remark}
Notice that we have chosen our base flow $v_0$ to be irrotational. From the mathematical point of view the strategy of the proof is sufficiently flexible to cover the case when $\mbox{curl }v_0$ is different than zero but sufficiently small. However, for each specific boundary condition it is not obvious if suitable rotational solutions exist.
\end{remark}

\begin{remark}
It is not a priori clear whether the smallness assumption on $v_0^{1}$ in Theorem \ref{Th:case2}  can be removed. This is due to the fact that a crucial step of the argument is to solve the equation \eqref{omega:o:type1} in order to obtain the value of the vorticity at $y=0$. The term $-\frac{1}{v_0^2}\partial_{x}(v_{0}^{1}V^{1})$ on the right hand side in \eqref{omega:o:type1} is linear in $V$ and therefore we cannot treat it perturbatively if $v^1_0$ is not small. If $v=v_{0}+V$ with $V \ll 1$ it is natural to try a linearization approach that yields a problem of the form 
\begin{equation}\label{ellip:hyper:problem}
\left\lbrace
\begin{array}{lll}
\Delta \psi= \omega(x,y), \ \mbox{in }  \Omega \\
v_0\cdot \nabla \omega=Q_{1}, \ \mbox{in }  \Omega \\
\omega(x,0)=\partial_{x}(v_{0}^{1}V^{1})+Q_2, \mbox{ on } \partial \Omega_{-} \\
\end{array} \right.
\end{equation} 
where $Q_1, Q_2$ contain terms that are quadratic in $V$ or small source terms due to the boundary data and $V=\nabla^{\perp}\psi$. The existence of an operator yielding $(V,\omega)$ in terms of $Q_1,Q_2$ can fail if the homogeneous problem obtained setting $Q_1=Q_2=0$ has non-trivial solutions. 
\end{remark}

\begin{remark}
The curve $\mathcal{C}= \{(0,y), y\in[0,L] \}$ along we fixed the flux $J$ can be chosen in a more general way. Indeed, we can choose two different curves $C_1$ and $C_2$ in which we have the same flux $J$ if we impose that $\int_{\Gamma_1}v\cdot n = \int_{\Gamma_2}v\cdot n$ as in Figure \ref{Figura2}.
\begin{figure}[h]
\includegraphics[width=8cm]{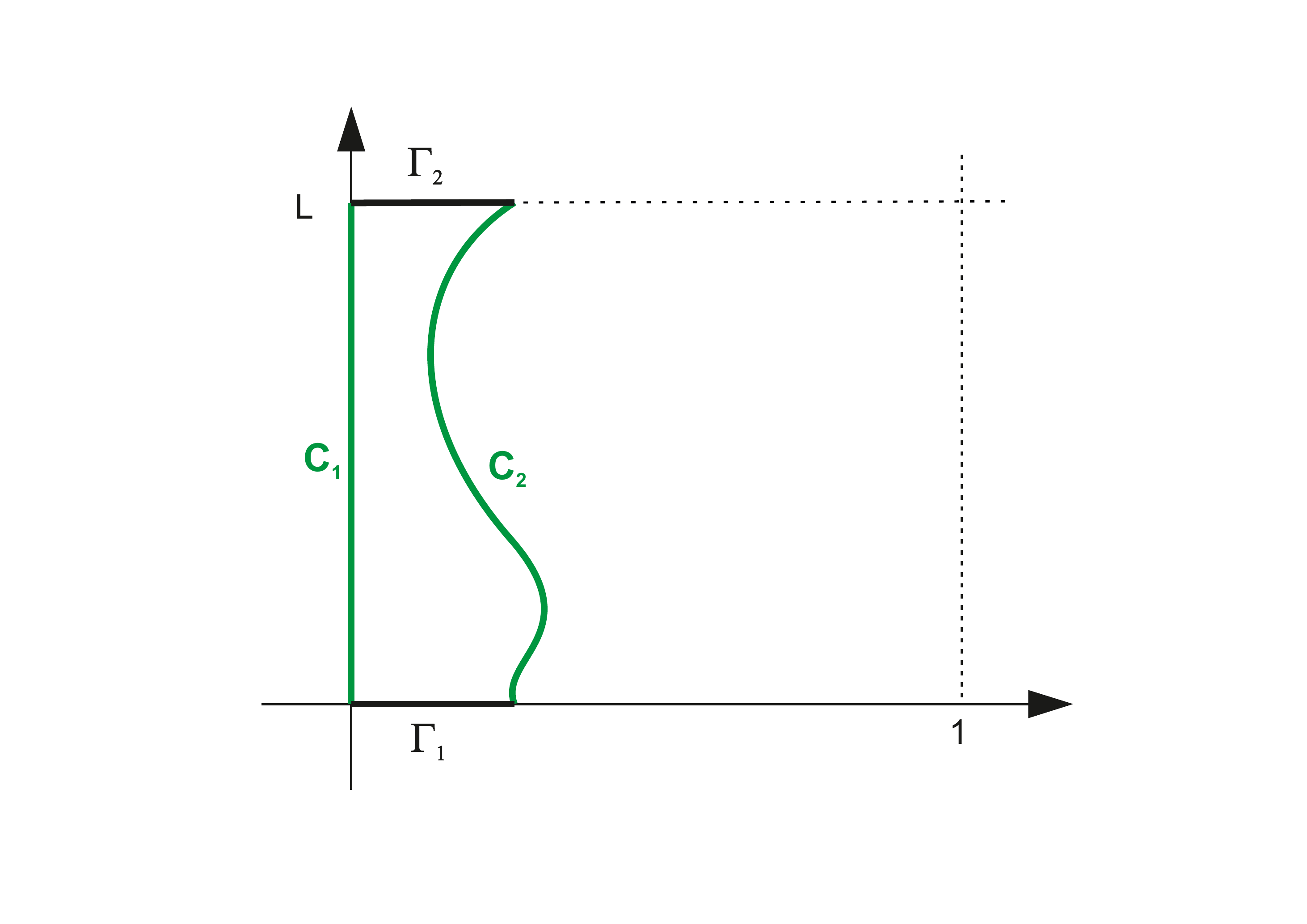}
\caption{General curves $\mathcal{C}$.}
\label{Figura2}
\end{figure}

\end{remark}

As we have mentioned in the introduction, the proof is based on defining an adequate operator $\Gamma$ on a subspace of $C^{2,\alpha}(\Omega)$ which has a fixed point $V$ such that $v=v_{0}+V$ is a solution to  \eqref{Euler2D:eq} and \eqref{boundary:value:type1}. To that purpose, let us define the following subspace $C_{\star}^{2,\alpha}(\Omega)$ given by 
\begin{equation}\label{C:2:alpha}
C_{\star}^{2,\alpha}(\Omega)=\{ g\in C^{2,\alpha}(\Omega): \mbox{div } g=0 \ \mbox{in } \Omega\}. 
\end{equation}
For any, $M>0$, let us denote by $B_{M}$ the closed ball of in $C_{\star}^{2,\alpha}(\Omega)$ with radius $M$, i.e.,  
\begin{equation}\label{B:M2:alpha}
B_{M}=\{ g\in C_{\star}^{2,\alpha}(\Omega): \norm{g}_{C^{2,\alpha}(\Omega)}\leq M \}.
\end{equation}

\begin{remark}\label{remark:lower:bound:v}
Notice that  if $v_0$ satisfies the hypothesis of Theorem \ref{Th:case2}, there exists a fixed constant $0<M_0<\frac{\bar{v}^{2}_{0}}{2}$ such that $v=v_{0}+V$ with $V\in B_{M_{0}}(\Omega)$ satisfies 
\begin{equation}\label{lower:bound:v}
\displaystyle\inf_{(x,y)\in \overline{\Omega}} \abs{v^2(x)}\geq  \bar{v}^{2}_{0}-M_{0}>\frac{\bar{v}^{2}_{0}}{2},
\end{equation} 
since $V^2\leq  \norm{V}_{C^{2,\alpha}(\Omega)} \leq M_{0}$. Throughout the article we will always assume (sometimes will be written explicitly) that  $M\leq M_{0}$, so that the lower bound \eqref{lower:bound:v} holds true for any function $ V\in B_{M}(\Omega)$.
\end{remark}

\subsubsection{The building blocks: the transport problem and the div-curl system}\label{S:2:1}
In this subsection we will provide regularity results and show several estimates regarding the hyperbolic transport problem and the div-curl problem which are the building blocks to construct the operator $\Gamma$. Notice that the results will be used not only to solve  the boundary value problem (\textit{B}) but will be instrumental to construct solutions to  boundary value problems \textit{(C)}, \textit{(G)} treated in this article (cf.  Section \ref{S:32} and Section \ref{S:3:3}). 

Before proceeding any further, let us show a regularity result for the trajectories associated to a vector field. We define the flow of a continuous and bounded vector field $b(x,y)$ as the map $X:\Omega\to \mathbb{S}^{1}$ which satisfies 
\begin{equation}\label{ode:flow:def}
\left\lbrace
\begin{array}{lll}
\frac{\partial X }{\partial y}(a,y)&= b(X(a,y),y) \\
X(a,0)&=a.
\end{array}\right.
\end{equation}
We refer to $a$ as the particle label, since it marks the beginning point of the path $a\to X(a,y)$. 
\begin{lemma}\label{flow:map:bounds} 
Let assume that $b\in C^{1,\alpha}(\Omega)$. Then, there exists a unique solution $X\in C^{1,\alpha}(\Omega) $  solving \eqref{ode:flow:def}.  Moreover, the following estimates are satisfied:
\begin{align}
\norm{X}_{C^{1,\alpha}(\Omega)}\ &\leq C\left(L, \norm{b}_{C^{1,\alpha}(\Omega)}\right), \label{bound:c1} \\
\norm{X^{-1}}_{C^{1,\alpha}(\Omega)} & \leq  C\left(L, \norm{b}_{C^{1,\alpha}(\Omega)}\right).\label{bound:c2}
\end{align}
\end{lemma}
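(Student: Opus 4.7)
The plan is to treat \eqref{ode:flow:def} as a family of ODEs on $\mathbb{S}^1$ parametrized by the initial datum $a$, with $y$ playing the role of time. I would lift $b$ to a periodic function on $\mathbb{R}\times[0,L]$ via $b(x+1,y)=b(x,y)$ and work with the lifted ODE on $\mathbb{R}$, knowing the resulting flow descends to $\mathbb{S}^1$ by periodicity of $b$. Since $b\in C^{1,\alpha}(\Omega)$ is Lipschitz in $x$ uniformly in $y$ and bounded, Picard--Lindel\"of yields a unique solution $X(a,\cdot)\in C^{1}([0,L])$ for each $a\in\mathbb{S}^1$, defined on the whole interval $[0,L]$.

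The $C^1$ regularity in $(a,y)$ is obtained by differentiating the flow equation. The $y$-derivative is given directly by
$$\partial_y X(a,y)=b(X(a,y),y),$$
which is bounded by $\|b\|_\infty$. For the $a$-derivative, the variational equation
$$\partial_y(\partial_a X)=\partial_x b(X(a,y),y)\,\partial_a X,\qquad \partial_a X(a,0)=1,$$
is linear in $\partial_a X$ and admits the explicit solution
$$\partial_a X(a,y)=\exp\!\left(\int_0^y \partial_x b(X(a,s),s)\,ds\right).$$
This formula immediately gives the two-sided bound $e^{-L\|\partial_x b\|_\infty}\le \partial_a X(a,y)\le e^{L\|\partial_x b\|_\infty}$, which proves both that $X\in C^1(\Omega)$ with $\|X\|_{C^1(\Omega)}\le C(L,\|b\|_{C^1(\Omega)})$, and that $X(\cdot,y)$ is a strictly monotone (in the universal cover), hence bijective, map $\mathbb{S}^1\to\mathbb{S}^1$ for every $y\in[0,L]$.

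Next I would upgrade the estimates from $C^1$ to $C^{1,\alpha}$. For $\partial_y X=b(X(a,y),y)$, the Lipschitz regularity of $X$ in $(a,y)$ just established, combined with $b\in C^\alpha(\Omega)$, gives the H\"older bound
$$|\partial_y X(a,y)-\partial_y X(a',y')|\le \|b\|_{C^\alpha}\bigl(|X(a,y)-X(a',y')|+|y-y'|\bigr)^\alpha \le C(L,\|b\|_{C^{1,\alpha}})\bigl(|a-a'|+|y-y'|\bigr)^\alpha.$$
For $\partial_a X$ I would apply the same reasoning to the exponent in the explicit formula: the integrand $\partial_x b(X(\cdot,s),s)$ is $C^\alpha$ in $(a,s)$ (again by composition of $\partial_x b\in C^\alpha$ with the Lipschitz map $a\mapsto X(a,s)$), so the integral is H\"older in $a$ with exponent $\alpha$ and Lipschitz in $y$, and composition with the smooth exponential yields the required $C^\alpha$ bound on $\partial_a X$.

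Finally, for the inverse, the pointwise bound $\partial_a X(a,y)\ge e^{-L\|\partial_x b\|_\infty}>0$ lets me invoke the inverse function theorem at each fixed $y$: $X^{-1}(\cdot,y)$ exists as a $C^{1,\alpha}$ diffeomorphism of $\mathbb{S}^1$, with $\partial_{a'}X^{-1}=1/\partial_a X$ evaluated at the preimage. Differentiating the identity $X(X^{-1}(a',y),y)=a'$ in $y$ gives $\partial_y X^{-1}=-b(a',y)/\partial_a X$, so both partial derivatives of $X^{-1}$ are quotients and compositions of the $C^\alpha$ quantities already controlled, yielding \eqref{bound:c2} with a constant of the same form $C(L,\|b\|_{C^{1,\alpha}(\Omega)})$. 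The only mildly delicate point in the whole argument is keeping the constants explicit in $L$ and $\|b\|_{C^{1,\alpha}}$ through the Gronwall-type exponentials; there is no genuine obstacle, as everything reduces to standard linear ODE bounds combined with composition estimates in H\"older spaces.
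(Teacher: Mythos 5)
Your proposal is correct and follows essentially the same route as the paper: the explicit exponential formula for $\partial_a X$ from the variational equation, H\"older bounds via composition of $C^\alpha$ functions with the Lipschitz flow, and inversion via the strict positivity of $\partial_a X$ together with the quotient formulas for the derivatives of $X^{-1}$. The only cosmetic difference is that you estimate the H\"older seminorm of $\partial_a X$ by controlling the exponent and composing with $\exp$, while the paper bounds the difference of the exponentials directly; these are the same computation.
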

\begin{proof}
To derive \eqref{bound:c1} we need to estimate $\frac{\partial X}{\partial y},\frac{\partial X}{\partial a}$ as well as their Hölder norms. Notice that a bound for the $\norm{\frac{\partial X}{\partial y}}_{L^{\infty}(\Omega)}$ follows directly by \eqref{ode:flow:def}.  Furthermore, standard results of differentiability with respect to parameters of ordinary differential equations (cf. \cite{Coddington-Levison-1955}) yield
\begin{align}
\frac{\partial X}{\partial a}(a,y) =\mbox{exp}\left(\int_{0}^{y}\frac{\partial b}{\partial X}(X(a,s),s)\ ds \right), \label{bound:c1:a}
\end{align}
hence an estimate for  $\norm{\frac{\partial X}{\partial a}}_{L^{\infty}(\Omega)}$ follows directly. To estimate the Hölder norm of $\frac{\partial X}{\partial a}$ we compute the difference 
\begin{align}
\abs{\dfrac{ \partial X}{ \partial a}(a_1,y)-\dfrac{\partial X}{ \partial a}(a_2,y)} &\leq   C \int_0^{y} \abs{\frac{\partial b}{\partial X}(X(a_1,s),s)-\frac{\partial b}{\partial X}(X(a_2,s),s)} ds  	\nonumber \\
&\leq  C  \int_{0}^{y} \norm{b}_{C^{1,\alpha}}\abs{X(a_1,y)-X(a_2,y)}^{\alpha} \nonumber \\
&\leq  CL  \norm{b}_{C^{1,\alpha}}\norm{\frac{\partial X}{\partial a}}_{L^\infty}^{\alpha}|a_{1}-a_{2}|^{\alpha}.
\end{align}
Therefore, 
\begin{equation}\label{holder:norm:a} 
 \frac{\abs{\frac{ \partial X }{ \partial a}(a_1,y)-\frac{ \partial X }{ \partial a}(a_2,y)}}{\abs{a_1-a_2}^\alpha}  \leq CL  \norm{b}_{C^{1,\alpha}}\norm{\frac{\partial X}{\partial a}}_{L^\infty}^{\alpha} \leq C\left(L, \norm{b}_{C^{1,\alpha}(\Omega)}\right), 
 \end{equation}
for $a_{1},a_{2}\in \mathbb{S}^1, y\in (0,L)$. On the other hand, given that $\norm{\frac{\partial X}{\partial y}}_{L^\infty}$  and $\norm{\frac{\partial X}{\partial a}}_{L^\infty}$ are bounded we have that the function $b(X(a,y),y)$ is Lipschitz in both variables $a,y$ ( and then Hölder on both variables $a,y$). Hence,
 \begin{equation}\label{holder:norm:both} 
 \frac{\abs{\frac{\partial X }{\partial y}(a_{1},y_1)-\frac{\partial X }{\partial y}(a_{2},y_2)}}{\abs{a_1-a_2}^\alpha+|y_{1}-y_{2}|^\alpha}\leq C\left(L, \norm{b}_{C^{1,\alpha}(\Omega)}\right), \quad a_{1},a_{2}\in \mathbb{S}^1, y\in (0,L).
 \end{equation} 
Combining bounds  \eqref{holder:norm:a}-\eqref{holder:norm:both} we infer that $\norm{X}_{C^{1,\alpha}(\Omega)}  \leq C\left(L, \norm{b}_{C^{1,\alpha}(\Omega)}\right)$ and hence \eqref{bound:c1} is satisfied. 
 
%
%
%

We are left to show \eqref{bound:c2}. To that purpose, notice that estimate \eqref{bound:c1:a} implies that $\frac{\partial X}{\partial a}>\mbox{exp}(L\norm{b}_{C^1(\Omega)})^{-1}>0$ and therefore the mapping $a\to X(a,y)$ is invertible for any $y\in (0,L)$ with inverse $X^{-1}(x,y)$ such that $X(X^{-1}(x,y),y)=x$. Moreover, 
\begin{equation}\label{inverse:exp}
 \frac{\partial X^{-1}}{\partial x}(x,y)=\frac{1}{\frac{\partial X}{\partial a}(X^{-1}(x,y),y)}, \quad   \frac{\partial X^{-1}}{\partial y}(x,y)=-\frac{\frac{\partial X}{\partial y}(X^{-1}(x,y),y)}{\frac{\partial X}{\partial a}(X^{-1}(x,y),y)}.
 \end{equation}
Using that $\norm{\frac{\partial X}{\partial x}}_{L^{\infty}(\Omega)},\norm{\frac{\partial X}{\partial y}}_{L^{\infty}(\Omega)}$ are bounded, it then follows that
 $X^{-1}$ is Lipschitz for $(x,y)\in \Omega$.
 
Moreover we can also estimate the Hölder semi-norms of $\frac{\partial X^{-1}}{\partial x},\frac{\partial X^{-1}}{\partial y}$ as above using that $\norm{X}_{C^{1,\alpha}(\Omega)}  \leq C(L, \norm{b}_{C^{1,\alpha}(\Omega)})  $, then the following bound holds
$$ \norm{X^{-1}}_{C^{1,\alpha}(\Omega)} \leq C(L, \norm{b}_{C^{1,\alpha}(\Omega)}), $$
proving \eqref{bound:c2} and concluding the proof.
\end{proof}

Next,  we will derive Hölder estimates for solutions to the hyperbolic transport type problem given by
\begin{equation}\label{transport:problem}(\textit{TP})
\left\lbrace
\begin{array}{lll}
(v_{0}+V)\cdot\nabla \omega =0 \ \mbox{in} \ \Omega, \\
\omega = \omega_{0} \ \mbox{on} \ \partial \Omega_{-}
\end{array} \right.
\end{equation} 
which is the first building block to construct the fixed point operator. 
\begin{proposition}\label{TP:prop}
Let $v_0$ be as in Theorem \ref{Th:case2} and $0<M_0<\frac{\bar{v}^{2}_{0}}{2}$. Then for every $M\leq M_{0}$, $V\in B_{M}(\Omega)$ and $\omega_0\in  C^{1,\alpha}(\partial\Omega_{-})$,  there exists a unique $\omega\in C^{1,\alpha}(\Omega)$ solving \eqref{transport:problem}. Moreover, there exists a constant $C=C(\alpha,\Omega,L,\bar{v}^{2}_{0})>0$ such that the following estimate holds
\begin{equation}\label{estimate:TP}
\norm{\omega}_{C^{1,\alpha}(\Omega)} \leq C  \norm{\omega_0}_{C^{1,\alpha}(\partial\Omega_{-})}. 
\end{equation}
Furthermore, let $\omega^{1},\omega^{2}\in C^{\alpha}(\Omega)$ be  two different solutions to  \eqref{transport:problem} with  $V$ given by $ V^{1},V^{2}$  respectively. Then
\begin{equation}\label{estimate:difference:TP}
\norm{\omega^{1}-\omega^{2}}_{C^{\alpha}(\Omega)} \leq  C \left(\norm{\omega^1_{0}-\omega^2_{0}}_{C^{\alpha}(\partial\Omega_{-})}+ \norm{\omega^{1}_{0}}_{C^{1,\alpha}(\partial\Omega_{-})}\norm{V^1-V^2}_{C^{\alpha}(\Omega)}\right)
\end{equation}
where $C=C(\alpha,L,\bar{v}^{2}_{0})>0$.
\end{proposition}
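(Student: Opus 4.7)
The plan is to solve \eqref{transport:problem} by the method of characteristics, using Lemma \ref{flow:map:bounds} to control the associated flow. Since $V \in B_M \subseteq B_{M_0}$, Remark \ref{remark:lower:bound:v} yields the uniform lower bound $v^2 := v_0^2 + V^2 \ge \bar v_0^2/2 > 0$ on $\overline{\Omega}$, so the integral curves of $v := v_0 + V$ are graphs over the $y$-axis. Setting $b(x,y) := v^1(x,y)/v^2(x,y)$, the transport equation $v \cdot \nabla \omega = 0$ becomes $\partial_y \omega + b\, \partial_x \omega = 0$. From $v_0, V \in C^{2,\alpha}(\Omega)$ and the lower bound on $v^2$, one has $b \in C^{1,\alpha}(\Omega)$ with norm controlled in terms of $\bar v_0^2$, $M_0$ and $\|v_0\|_{C^{2,\alpha}}$ only.

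Applying Lemma \ref{flow:map:bounds} to the flow $X(a,y)$ of $b$ produces $X, X^{-1} \in C^{1,\alpha}(\Omega)$ with uniform bounds, and I set $\omega(x,y) := \omega_0(X^{-1}(x,y))$. The chain rule together with \eqref{inverse:exp} shows that $\omega \in C^{1,\alpha}(\Omega)$ solves both the PDE and the boundary condition; the composition estimate $[f \circ g]_\alpha \le [f]_\alpha \|\nabla g\|_{L^\infty}^\alpha$, combined with the bound \eqref{bound:c2} on $\|X^{-1}\|_{C^{1,\alpha}}$, yields \eqref{estimate:TP}. Uniqueness is immediate, since any $C^1$ solution obeys $\frac{d}{dy}[\omega(X(a,y),y)] \equiv 0$ and is therefore determined by its trace on $\partial \Omega_-$.

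For the stability estimate \eqref{estimate:difference:TP}, I let $X^i$ denote the flow associated with $v_0 + V^i$, $i = 1,2$, and decompose
\begin{equation*}
\omega^1(x,y) - \omega^2(x,y) = \bigl[\omega_0^1((X^1)^{-1}(x,y)) - \omega_0^1((X^2)^{-1}(x,y))\bigr] + \bigl[(\omega_0^1 - \omega_0^2)((X^2)^{-1}(x,y))\bigr].
\end{equation*}
The second bracket contributes $C\|\omega_0^1 - \omega_0^2\|_{C^\alpha(\partial\Omega_-)}$ since $(X^2)^{-1}$ has a uniform $C^{1,\alpha}$ bound. For the first bracket, the pointwise Lipschitz estimate
\begin{equation*}
\bigl|\omega_0^1((X^1)^{-1}) - \omega_0^1((X^2)^{-1})\bigr| \le \|\omega_0^1\|_{C^{1,\alpha}} \bigl|(X^1)^{-1} - (X^2)^{-1}\bigr|,
\end{equation*}
together with an analogous computation for the $C^\alpha$ seminorm, bounds the whole bracket by $C\|\omega_0^1\|_{C^{1,\alpha}} \|(X^1)^{-1} - (X^2)^{-1}\|_{C^\alpha(\Omega)}$. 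It then suffices to show $\|(X^1)^{-1} - (X^2)^{-1}\|_{C^\alpha} \le C\|V^1 - V^2\|_{C^\alpha}$, which follows by a Gronwall argument on the ODE
\begin{equation*}
\partial_y(X^1 - X^2) = \bigl[b^1(X^1, y) - b^1(X^2, y)\bigr] + \bigl[b^1 - b^2\bigr](X^2, y),
\end{equation*}
together with the estimate $\|b^1 - b^2\|_{C^\alpha} \le C\|V^1 - V^2\|_{C^\alpha}$ (from Lipschitz dependence of $b$ on $V$ with $v^2$ bounded below) and the inversion formulas \eqref{inverse:exp}.

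The main obstacle I anticipate is promoting the pointwise Gronwall estimate on $X^1 - X^2$ to a $C^\alpha$ bound on $(X^1)^{-1} - (X^2)^{-1}$ while only using $\|V^1 - V^2\|_{C^\alpha}$ on the right-hand side; since the two flows enjoy uniform $C^{1,\alpha}$ bounds coming from Lemma \ref{flow:map:bounds}, an interpolation between the $L^\infty$ Gronwall estimate and the a priori $C^{1,\alpha}$ control should close the gap, and this is the only nonroutine step in the argument.
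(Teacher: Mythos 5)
Your treatment of existence, uniqueness and the estimate \eqref{estimate:TP} coincides with the paper's: both integrate along the characteristics of $b=(v_0^1+V^1)/(v_0^2+V^2)$ and invoke Lemma \ref{flow:map:bounds}. For the stability estimate \eqref{estimate:difference:TP} you take a genuinely different route, and there is a gap precisely at the step you flag as nonroutine. Your plan reduces everything to $\norm{(X^1)^{-1}-(X^2)^{-1}}_{C^\alpha(\Omega)}\le C\norm{V^1-V^2}_{C^\alpha(\Omega)}$, and you propose to obtain the H\"older seminorm of the flow difference by interpolating between the $L^\infty$ Gronwall bound and the uniform $C^{1,\alpha}$ bounds of Lemma \ref{flow:map:bounds}. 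That interpolation gives, for $D=(X^1)^{-1}-(X^2)^{-1}$, only $\left[D\right]_{\alpha,\Omega}\le C\norm{D}_{L^\infty}^{1-\alpha}\,\norm{\nabla D}_{L^\infty}^{\alpha}\le C\norm{V^1-V^2}_{L^\infty}^{1-\alpha}$: the right-hand side is sublinear in $\norm{V^1-V^2}$, so you would prove \eqref{estimate:difference:TP} with $\norm{V^1-V^2}_{C^\alpha}^{1-\alpha}$ in place of $\norm{V^1-V^2}_{C^\alpha}$. Since $\norm{V^1-V^2}$ is small in the application, this is strictly weaker than the stated linear estimate, and it would also destroy the contraction property needed in Lemma \ref{Lemma:contraction}. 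The step can be repaired, but not by interpolation: one must run Gronwall directly on the quantity $\sup_{a_1\ne a_2}\abs{E(a_1,y)-E(a_2,y)}/\abs{a_1-a_2}^\alpha$ with $E=X^1-X^2$, writing $b^1(X^1(a_1),y)-b^1(X^2(a_1),y)-b^1(X^1(a_2),y)+b^1(X^2(a_2),y)$ as a second difference and using that $\partial_x b^1$ is itself $C^\alpha$ to bound the off-diagonal contribution by $C\abs{a_1-a_2}^\alpha\norm{E}_{L^\infty}$; this yields the linear bound $\left[E(\cdot,y)\right]_{\alpha}\le C\norm{V^1-V^2}_{C^\alpha}$, which then transfers to the inverses via the identity $D(x,y)=-E((X^2)^{-1}(x,y),y)\,\bigl(\int_0^1\partial_a X^1(\cdot)\,dt\bigr)^{-1}$.

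For comparison, the paper sidesteps this difficulty entirely: it writes the equation satisfied by $\widehat\omega=\omega^1-\omega^2$, namely $(v_0+V_2)\cdot\nabla\widehat\omega=-\widehat V\cdot\nabla\omega_1$ with $\widehat\omega=\widehat\omega_0$ on $\partial\Omega_-$, and integrates by a Duhamel formula along the single flow associated with $V_2$ (formula \eqref{sol:diff:transport}). Only one flow map appears, so no estimate on the difference of two flows is needed; the price is the source term $\widehat V\cdot\nabla\omega_1$, which is exactly where the factor $\norm{\omega^1_0}_{C^{1,\alpha}(\partial\Omega_-)}$ in \eqref{estimate:difference:TP} comes from. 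That argument is shorter and avoids the delicate H\"older--Gronwall step altogether; either fix makes your proof complete, but as written the interpolation does not deliver the claimed inequality.
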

\begin{remark}
It is important to emphasize that the positive constants $C$ depend only on the following fixed quantities $C=C(\alpha, L,\bar{v}^{2}_{0})$. Note also that  it might change from line to line.  For exposition's clearness we will avoid writing explicitly the constants dependences along the proofs throughout the manuscript.
\end{remark}
\begin{proof}
We can  solve equations  \eqref{transport:problem} using the integral curves of $v=v_0+V$. More precisely, the explicit solution to \eqref{transport:problem} is given by 
$$ \omega(x,y)=\omega_{0}(X^{-1}(x,y)),$$
where $X^{-1}$ is the inverse of the mapping $a\to X(a,y)$ solving the ordinary differential equation \eqref{ode:flow:def} with $b(x,y)=\frac{(v^1_{0}+V^{1})(x,y)}{(v^2_{0}+V^2)(x,y)}$. Since $0<M_0<\frac{\bar{v}^{2}_{0}}{2}$ and $M\leq M_{0}$,  we have that $\displaystyle\inf_{(x,y)\in\overline{\Omega}} \abs{v^2_{0}+V^2}>0$ (cf.  Remark \ref{remark:lower:bound:v}), hence $b(x,y)$ has $C^{1,\alpha}(\Omega)$ regularity and satisfies the bound
\begin{equation}\label{bound:b:alpha}
 \norm{b(x,y)}_{C^{1,\alpha}(\Omega)}\leq C.
\end{equation}
Using Lemma \ref{flow:map:bounds}, we have that there exists a unique  $X\in C^{1,\alpha}(\Omega)$ solving the system \eqref{ode:flow:def} with inverse $X^{-1}$. Therefore, invoking the estimate \eqref{bound:c2} in Lemma \ref{flow:map:bounds} and the bound \eqref{bound:b:alpha} we have that
\begin{align}\label{bound:vort:0}
\norm{ \omega(x,y)}_{C^{1,\alpha}(\Omega)}= \norm{\omega_{0}(X^{-1}(x,y))}_{C^{1,\alpha}(\partial\Omega)} &\leq C  \norm{\omega_{0}}_{C^{1,\alpha}(\partial\Omega_{-})}\left( 1+\norm{\nabla X^{-1}}_{C^{\alpha}(\Omega)}\right)^{1+\alpha} \nonumber \\
& \leq C \norm{\omega_{0}}_{C^{1,\alpha}(\partial\Omega_{-})}.
\end{align}
To show \eqref{estimate:difference:TP}, we use the notation $\widehat{\omega}=\omega_{1}-\omega_{2}=$ and $\widehat{V}=V_{1}-V_{2}$.  From \eqref{transport:problem}, we have that 
\begin{equation}\label{difference:transport:2}
\left\lbrace
\begin{array}{lll}
(v_{0}+V_{2})\cdot\nabla \widehat{\omega} = -\widehat{V}\cdot\nabla\omega_{1} \quad  \mbox{in} \ \Omega, \\
\widehat{\omega}=\widehat{\omega_{0}}\quad  \mbox{on } \partial \Omega_{-}.
\end{array}\right.
\end{equation}
Solving \eqref{difference:transport:2} using characteristics we have that
\begin{equation}\label{sol:diff:transport}
 \widehat{\omega}(x,y)=\widehat{\omega_{0}}(X^{-1}(x,y)) - \int_0^y \left(\frac{\widehat{V}\cdot\nabla\omega_{1}}{v^2_{0}+V^2_2}\right)(X(X^{-1}(x,y),s),s) \  ds
\end{equation}
where $X$ solves the ordinary differential equation \eqref{ode:flow:def} with $b(x,y)=\frac{(v^1_{0}+V^{1}_{2})(x,y)}{(v^2_{0}+V^2_{2})(x,y)}$.
Therefore, we infer that
\begin{eqnarray*}
\norm{\widehat{\omega}(x,y)}_{C^{\alpha}(\Omega)} &\leq & C \norm{\widehat{\omega_{0}}}_{C^{\alpha}(\partial\Omega_{-})}\left(1+\norm{\nabla X^{-1}}_{C^{0}(\Omega)}\right) \\
&& +  C \norm{\displaystyle\int_0^y \left(\frac{\widehat{V}\cdot\nabla\omega_{1}}{v^2_{0}+V^2_2}\right)\left(X(X^{-1}(x,y),s),s\right) \  ds}_{C^{\alpha}(\Omega)} = I_1+I_2.
\end{eqnarray*}
Applying estimate \eqref{bound:c2} in Lemma \ref{flow:map:bounds} and bound \eqref{bound:b:alpha}, we can estimate the first term as before
\begin{equation}\label{I_1}
\abs{I_1}\leq C \norm{\widehat{\omega}_{0}}_{C^{\alpha}(\partial\Omega_{-})}.
\end{equation}
Next, we estimate the second term $I_2$. To that purpose, let us define $\phi(x,y)=\displaystyle\int_{0}^{y} \mathcal{H}(x,y,s) ds$ for any function $\mathcal{H}\in C^\alpha(\Omega\times (0,L))$. Then we have that
\begin{equation}\label{estimate:phi:h}
\abs{\phi(x_{1},y)-\phi(x_2,y)} \leq L \norm{\mathcal{H}}_{C^\alpha} \abs{x_{1}-x_{2}}^{\alpha}, \quad y\in (0,L),
\end{equation}
and for any $0\leq y_1\leq y_2 \leq  L$
\begin{eqnarray}\label{estimate:phi:h:2}
\abs{\phi(x,y_{1})-\phi(x,y_{2})}&\leq & \left(\int_{0}^{y_{2}}\norm{\mathcal{H}}_{C^\alpha} \ ds \right) \abs{y_{1}-y_{2}}^{\alpha} + \int_{y_{1}}^{y_2} \norm{\mathcal{H}}_{L^\infty} \ ds \nonumber \\
 &\leq &  L\norm{\mathcal{H}}_{C^\alpha}|y_{1}-y_{2}|^{\alpha}+\norm{\mathcal{H}}_{L^{\infty}}|y_{1}-y_{2}|, \quad x\in [0,1].
\end{eqnarray}
Hence, we have shown that  $\norm{\phi}_{C^{\alpha}} \leq C(L) \norm{\mathcal{H}}_{C^{\alpha}}$ for $ x\in [0,1], y\in (0,L)$. Applying this result for $\mathcal{H}=\left(\frac{\widehat{V}\cdot\nabla\omega_{1}}{v^2_{0}+V^2_2}\right)\left(X(X^{-1}(x,y),s),s\right)$, we conclude that
\begin{eqnarray}\label{I_2}
\abs{I_2} \leq  \norm{\left(\frac{\widehat{V}\cdot\nabla\omega_{1}}{v^2_{0}+V^2_2}\right)(X(X^{-1}(x,y),s),s)}_{C^\alpha(\Omega)}  &\leq&  C\norm{\frac{\widehat{V}\cdot\nabla\omega_{1}}{v^2_{0}+V^2_2}}_{C^\alpha(\Omega)} \nonumber \\
&\leq & C\norm{\omega_1}_{C^{1,\alpha}(\Omega)}\norm{\widehat{V}}_{C^{\alpha(\Omega)}}  \nonumber \\
&\leq & C\norm{\omega_{0,1}}_{C^{1,\alpha}(\partial\Omega_{-})}\norm{\widehat{V}}_{C^{\alpha}(\Omega)},
\end{eqnarray}
where in the second inequality we have used the fact that $X$ and $X^{-1}$ are Lipschitz, so bound  \eqref{bound:c1} in Lemma \ref{flow:map:bounds} applies and in the latter we have invoked bound \eqref{bound:vort:0}.

Hence, collecting estimates \eqref{I_1}-\eqref{I_2} we deduce that
\begin{equation}
\norm{\widehat{\omega}(x,y)}_{C^{\alpha}(\Omega)} \leq C \left(\norm{\widehat{\omega}_{0}}_{C^{\alpha}(\partial\Omega_{-})}+ \norm{\omega_{0,1}}_{C^{1,\alpha}(\partial \Omega_{-})}\norm{\widehat{V}}_{C^{\alpha}(\Omega)}\right),
\end{equation}
which shows \eqref{estimate:difference:TP} and concludes the proof.
\end{proof}

On the other hand, we have the following result for the div-curl problem:

\begin{proposition}\label{DCP:prop}
For every $j\in \mathbb{R}$, $\omega\in C^{1,\alpha}(\Omega)$ and $f\in C^{2,\alpha}(\partial\Omega)$ satisfying \eqref{f:condition},  there exists a unique solution $W\in C^{2,\alpha}(\Omega)$ solving
\begin{equation}\label{DCP:problem}
\left\lbrace
\begin{array}{lll}
\nabla \times W = \omega, \quad \mbox{in } \Omega \\ 
\mbox{div } W =0,  \quad \mbox{in } \Omega  \\
W \cdot n = f ,  \quad \mbox{on } \partial \Omega \\
\int_{\mathcal{C}}W\cdot n \ dS= j.
\end{array}\right.
\end{equation}
Moreover, the solution satisfies the inequality
\begin{equation}\label{estimate:DCP}
\norm{W}_{C^{2,\alpha}(\Omega)} \leq C \left(\norm{\omega}_{C^{1,\alpha}(\Omega)}+ \norm{f}_{C^{2,\alpha}(\partial\Omega)} + \abs{j}\right),
\end{equation}
where  $C=C(L,\alpha)>0$. 
\end{proposition}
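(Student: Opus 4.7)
My plan is to reduce the div-curl problem to a scalar Poisson equation for a stream function, carefully accounting for the non-trivial topology of the cylinder $\Omega=\mathbb{S}^1\times(0,L)$. I would seek $W$ in the form
\begin{equation*}
W = \nabla^{\perp}\psi + (0, F),
\end{equation*}
where $\psi$ is a single-valued, $x$-periodic function and $F:=\int_{\partial\Omega_-}f\,dS=\int_{\partial\Omega_+}f\,dS$ (equal by \eqref{f:condition}) is the common horizontal-line mean of $W^2$, which no periodic $\nabla^{\perp}\psi$ can reproduce on its own. The other cohomological direction on the cylinder, corresponding to a constant horizontal field $(a,0)=\nabla^{\perp}(-ay)$, is harmless and gets absorbed into the linear-in-$y$ part of $\psi$, so it need not be tracked separately.

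Under this ansatz, $\mbox{div}\,W=0$ holds automatically and $\nabla\times W=\Delta\psi=\omega$. The boundary prescription $W\cdot n=f$ becomes a condition on $\partial_x\psi$ at $y=0$ and $y=L$; condition \eqref{f:condition} is precisely what makes these tangential derivatives mean-zero in $x$, so integrating them produces periodic boundary values $g_\pm(x)$ for $\psi$, each determined up to an additive constant. Fixing one constant as a harmless normalization (say $g_-(0)=0$), the remaining free constant is the jump $g_+(0)-g_-(0)$, and a direct computation identifies it with $-\int_\mathcal{C} W\cdot n\,dS = -j$, pinning it down explicitly from the given data.

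At this point the problem has been reduced to the linear Dirichlet problem $\Delta\psi=\omega$ in $\Omega$ with prescribed periodic boundary values $g_\pm\in C^{3,\alpha}(\mathbb{S}^1)$ satisfying $\norm{g_\pm}_{C^{3,\alpha}(\mathbb{S}^1)}\leq C(\norm{f}_{C^{2,\alpha}(\partial\Omega)}+|j|)$ by straightforward boundary integration. Classical Schauder theory on the cylinder then yields a unique $\psi\in C^{3,\alpha}(\Omega)$ with $\norm{\psi}_{C^{3,\alpha}(\Omega)}\leq C(\norm{\omega}_{C^{1,\alpha}(\Omega)}+\norm{f}_{C^{2,\alpha}(\partial\Omega)}+|j|)$; since $W=\nabla^{\perp}\psi+(0,F)$ with $|F|\leq C\norm{f}_{C^{2,\alpha}(\partial\Omega)}$, this transfers immediately to the bound \eqref{estimate:DCP}. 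Uniqueness is then automatic: the difference of two solutions produces a harmonic $\psi$ with vanishing Dirichlet data, which must vanish. The only point requiring genuine care—rather than a serious analytic obstacle—is the topological bookkeeping that fixes the two cohomological degrees of freedom ($F$ from \eqref{f:condition} and the jump from $j$) using the data; with that correctly handled, the remainder is standard elliptic regularity.
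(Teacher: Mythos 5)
Your proposal is correct and takes essentially the same route as the paper: the paper also writes $W=(0,A)+\nabla^{\perp}\psi$ with $A$ the common mean of $f$ (your $F$), integrates $f-A$ to obtain single-valued periodic Dirichlet data for $\psi$ whose jump across the two boundary components is fixed to $-j$ by the flux condition, and concludes with Schauder estimates for the resulting Poisson problem. No substantive differences.
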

\begin{proof}
To solve system \eqref{DCP:problem}, we examine the following auxiliary problem, namely
\begin{equation}\label{stream:function}
\left\lbrace
\begin{array}{lll}
\Delta \psi= \omega, \quad x\in \Omega \\ 
\psi= -j+\displaystyle\int_{0}^{x} (f(\xi)-A) \ d\xi  ,  \quad x\in \partial \Omega_{+} \\
\psi= \displaystyle\int_{0}^{x} (f(\xi)-A) \ d\xi  ,  \quad x\in \partial \Omega_{-} \\
\end{array}\right.
\end{equation}
where $A=\int_{\partial\Omega_{+}} f \ dS=\int_{\partial\Omega_{-}} f  \ dS.$ Notice that this particular choice of $A$, yields a well-defined uni-valued function  $\psi$. Moreover, if $\psi$ is a solution to \eqref{stream:function} of sufficient regularity (actually $\psi\in C^{3,\alpha}(\Omega)$), we get a solution to \eqref{DCP:problem} by defining $W=(0,A)+\nabla^{\perp}\psi$ where  $\nabla^{\perp}\psi=(-\frac{\partial \psi}{\partial y},-\frac{\partial \psi}{\partial x})$. We only verify the last condition in \eqref{DCP:problem}, since the other ones are straightforward to check. Indeed, we have that
$$\displaystyle\int_{\mathcal{C}}W\cdot n \ dS=-\int_{0}^{L} \frac{\partial \psi}{\partial y}(0,y) \ dy=-\psi(0,L)+\psi(0,0)=j.$$ Hence applying classical regularity theory for elliptic problems (cf. \cite{Gilbarg-Trudinger-2001}) we infer that there exists a unique solution $W\in C^{2,\alpha}(\Omega)$ satisfying the following bound:
$$ \norm{W}_{C^{2,\alpha}(\Omega)}\leq C \left(\norm{\omega}_{C^{1,\alpha}(\Omega)}+ \norm{f}_{C^{2,\alpha}(\partial\Omega)} + \abs{j}\right),$$
as desired.
\end{proof}

\subsubsection{The fixed point argument and construction of the solution}\label{S:2:2}
The proof is based on defining an adequate operator $\Gamma$ which has a fixed point $V$ such that $v=v_{0}+V$ is a solution to  \eqref{Euler2D:eq} and \eqref{boundary:value:type1}.

We define the operator $\Gamma: B_{M}(\Omega) \to C_{\star}^{2,\alpha}(\Omega)$ in two steps. First, given $V\in B_{M}(\Omega)$ we define $\omega\in C^{1,\alpha}(\Omega)$ solving the following the transport type problem 
\begin{equation}\label{transport:problem:type1}
\left\lbrace
\begin{array}{lll}
(v_{0}+V)\cdot\nabla \omega =0, \ \mbox{in} \ \Omega \\
\omega = \omega_{0}, \ \mbox{on} \ \partial \Omega_{-}
\end{array} \right.
\end{equation} 
with  $\omega_0$ given by
\begin{equation}\label{omega:o:type1}
\omega_{0}=\partial_{x}f+ \frac{1}{f+v^2_{0}}\left( -\partial_{x}h+\partial_{x}(v^{1}_{0}V^1)+\frac{1}{2}\partial_{x}\abs{V^1}^2+f \partial_{y}v^{1}_{0}\right),\  \forall x\in\partial\Omega_{-},
\end{equation}
where $v_{0}=(v^1_{0},v^2_{0})$ and $V=(V^1,V^2)$.  As a second step, we define $W\in C^{2,\alpha}(\Omega)$ as the unique solution to the following div-curl problem
 \begin{equation}\label{div:curl:problem:type1}
\left\lbrace
\begin{array}{lll}
\nabla\times W= \omega, \ \mbox{in} \ \Omega \\
\mbox{div } W=0, \ \mbox{in} \ \Omega \\ 
 W\cdot n=f, \ \mbox{on} \ \partial \Omega \\
\displaystyle \int_{\mathcal{C}} W\cdot n \  dS=J-J_{0}.
\end{array} \right.
\end{equation} 
Thus we define $\Gamma(V)=W$.

\begin{lemma}\label{Lemma:contraction} Let  $v_0$  be as in Theorem \ref{Th:case2}, $h\in C^{2,\alpha}(\partial\Omega_{-})$ and $ f\in C^{2,\alpha}(\partial\Omega)$ satisfying \eqref{f:condition}.   There exists $\delta_{0}=\delta_{0}(\Omega,\alpha, L, \bar{v}^{2}_{0})$ and $M_{0}\in (0, \frac{\bar{v}^{2}_{0}}{2})$ such that if $M\in (0, M_{0})$ and 
\begin{equation}
\epsilon \leq \delta_{0}, \ \norm{h}_{C^{2,\alpha}(\partial\Omega_{-})}+\norm{f}_{C^{2,\alpha}(\partial\Omega)}\leq M_{0}\delta_{0}  \mbox{ and } \abs{J-J_{0}} \leq M_{0}\delta_{0},
\end{equation}
then $\Gamma( B_{M}(\Omega))\subset  B_{M}(\Omega)$. Moreover, the operator $\Gamma$ has a unique fixed point in $B_{M}(\Omega).$ 
\end{lemma}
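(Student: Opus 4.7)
The plan is a two-norm fixed point argument: establish the invariance $\Gamma(B_{M}(\Omega))\subset B_{M}(\Omega)$ in the strong $C^{2,\alpha}$ topology that defines $B_{M}$, and then the contraction property in the weaker $C^{1,\alpha}$ topology. This asymmetry is forced by the tangential derivative $\partial_{x}$ appearing in \eqref{omega:o:type1}: the map $V\mapsto \omega_{0}$ is bounded only from $C^{2,\alpha}(\Omega)$ into $C^{1,\alpha}(\partial\Omega_{-})$, and taking differences loses one further derivative in $V$. Banach's theorem will then be applied to $B_{M}$ equipped with the $C^{1,\alpha}$ distance, which is a complete metric space because the $C^{2,\alpha}$ norm is lower semicontinuous under $C^{1,\alpha}$-convergence and the divergence-free condition passes to $C^{1}$ limits.

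For invariance, I fix $V\in B_{M}(\Omega)$ and estimate $\|\omega_{0}\|_{C^{1,\alpha}(\partial\Omega_{-})}$ term by term from \eqref{omega:o:type1}. Since $f+v_{0}^{2}$ is bounded below on $\partial\Omega_{-}$ (by Remark~\ref{remark:lower:bound:v}, provided $M_{0}$ is small) and $\|v_{0}^{1}\|_{C^{2,\alpha}}\leq \epsilon$, the pure boundary-data pieces are controlled by $C(\|f\|_{C^{2,\alpha}}+\|h\|_{C^{2,\alpha}})$, the mixed term $\partial_{x}(v_{0}^{1}V^{1})$ by $C\epsilon M$, and the genuinely quadratic term $\partial_{x}|V^{1}|^{2}$ by $CM^{2}$. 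Feeding this bound into Proposition~\ref{TP:prop} preserves the order of magnitude for $\|\omega\|_{C^{1,\alpha}(\Omega)}$, and Proposition~\ref{DCP:prop} then yields
\[
\|\Gamma(V)\|_{C^{2,\alpha}(\Omega)}\leq C_{\star}\bigl(\|f\|_{C^{2,\alpha}}+\|h\|_{C^{2,\alpha}}+|J-J_{0}|+\epsilon M+M^{2}\bigr),
\]
with $C_{\star}$ depending only on $\alpha,L,\bar v_{0}^{2}$. Fixing $\delta_{0}$ and $M_{0}$ sufficiently small, the right-hand side does not exceed $M$ under the hypotheses of the lemma.

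For contraction, take $V_{1},V_{2}\in B_{M}$, set $\hat V=V_{1}-V_{2}$, and form $\hat\omega_{0}=\omega_{0,1}-\omega_{0,2}$, $\hat\omega=\omega_{1}-\omega_{2}$, $\hat W=\Gamma(V_{1})-\Gamma(V_{2})$. Writing $|V_{1}^{1}|^{2}-|V_{2}^{1}|^{2}=(V_{1}^{1}+V_{2}^{1})\hat V^{1}$ and using $\|V_{1}^{1}+V_{2}^{1}\|_{C^{1,\alpha}}\leq 2M$ together with $\|v_{0}^{1}\|_{C^{1,\alpha}}\leq \epsilon$, one reads off from \eqref{omega:o:type1} that $\|\hat\omega_{0}\|_{C^{\alpha}(\partial\Omega_{-})}\leq C(\epsilon+M)\|\hat V\|_{C^{1,\alpha}(\Omega)}$. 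The incremental bound \eqref{estimate:difference:TP}, combined with $\|\omega_{0,1}\|_{C^{1,\alpha}}\leq CM$ from the previous step, gives $\|\hat\omega\|_{C^{\alpha}(\Omega)}\leq C(\epsilon+M)\|\hat V\|_{C^{1,\alpha}}$. Since $\hat W$ solves \eqref{DCP:problem} with source $\hat\omega$, homogeneous boundary data and zero flux, repeating the stream-function argument of Proposition~\ref{DCP:prop} one notch lower in the Schauder scale yields
\[
\|\hat W\|_{C^{1,\alpha}(\Omega)}\leq C\,\|\hat\omega\|_{C^{\alpha}(\Omega)}\leq C(\epsilon+M)\,\|\hat V\|_{C^{1,\alpha}(\Omega)}.
\]
A further shrinking of $\delta_{0}$ and $M_{0}$ makes $C(\epsilon+M)<1$, so $\Gamma$ is a strict contraction on $B_{M}$ in the $C^{1,\alpha}$ metric, and Banach's theorem delivers the unique fixed point.

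The main obstacle is exactly this loss of regularity in the definition of $\omega_{0}$: contraction cannot be carried out in the $C^{2,\alpha}$ norm of the ball, because the incremental map $\hat V\mapsto \hat W$ gains no smoothing to compensate for the derivative lost in \eqref{omega:o:type1}. Separating invariance (done in $C^{2,\alpha}$) from contraction (done in $C^{1,\alpha}$) and using the compact embedding $C^{2,\alpha}\hookrightarrow C^{1,\alpha}$ to keep $B_{M}$ closed, hence complete, in the weaker norm is the mechanism that circumvents the difficulty. The remaining work is the term-by-term verification of the boundary estimates and the careful bookkeeping ensuring that all constants depend only on $\alpha, L, \bar v_{0}^{2}$.
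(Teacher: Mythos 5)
Your proposal is correct and follows essentially the same route as the paper: invariance of $B_{M}$ is checked in the $C^{2,\alpha}$ norm via the term-by-term bound on $\omega_{0}$ in \eqref{omega:o:type1} fed through Propositions \ref{TP:prop} and \ref{DCP:prop}, contraction is established in the weaker $C^{1,\alpha}$ metric using \eqref{estimate:difference:TP} and the linearity of the div-curl problem, and completeness of $(B_{M},\norm{\cdot}_{C^{1,\alpha}})$ is obtained from closedness of the $C^{2,\alpha}$-ball under $C^{1,\alpha}$ convergence (the paper does this via Arzel\`a--Ascoli and lower semicontinuity of the H\"older seminorm, which is the same mechanism you invoke). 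The identification of the derivative loss in $V\mapsto\omega_{0}$ as the reason the contraction cannot be run in $C^{2,\alpha}$ matches the paper's stated motivation exactly.
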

\begin{remark}
Notice that the operator $\Gamma$ is not a compact operator, and hence we cannot prove the result  applying Schauder's fixed point theorem. 
\end{remark}

\begin{proof}
The well-definiteness of the operator $\Gamma$ follows directly from Proposition \ref{TP:prop} and Proposition \ref{DCP:prop}. Indeed, since $v_{0}$ satisfies the hypothesis in Theorem \ref{Th:case2} and  $h\in C^{2,\alpha}(\partial\Omega_{-}), f\in C^{2,\alpha}(\partial\Omega)$, we have that  $\omega_{0}$  as in \eqref{omega:o:type1} satisfies  $\omega_{0}\in C^{1,\alpha}(\partial\Omega_{-})$. Applying Proposition \ref{TP:prop}, there exists a unique $\omega\in C^{1,\alpha}(\Omega)$ satisfying  \eqref{transport:problem:type1}. Therefore Proposition \ref{DCP:prop} gives a unique $W\in C_{\star}^{2,\alpha}(\Omega)$ satisfying \eqref{div:curl:problem:type1}.

We now show that the operator  $\Gamma$ maps $ V\in B_{M}(\Omega)$ into itself.  Using inequality \eqref{estimate:DCP} with $j=J-J_{0}$ and then \eqref{estimate:TP}, we have that
\begin{eqnarray}\label{estimate:dcp:1}
\norm{\Gamma(V)}_{C^{2,\alpha}(\Omega)} &\leq & C \left(\norm{\omega}_{C^{1,\alpha}(\Omega)}+\norm{f}_{C^{2,\alpha}(\partial \Omega)}+\abs{J-J_{0}}\right) \nonumber \\
&\leq & C \left(\norm{\omega_0}_{C^{1,\alpha}(\partial\Omega_{-})}+\norm{f}_{C^{2,\alpha}(\partial \Omega)}+\abs{J-J_{0}}\right).
\end{eqnarray}
In addition, since $V\in B_{M}(\Omega)$, the function $\omega_{0}$ defined in  \eqref{omega:o:type1} can be estimated as follows
\begin{eqnarray}\label{estimate:omega:1}
\norm{\omega_{0}}_{C^{1,\alpha}(\partial\Omega_{-})}\ &=&\norm{\partial_{x}f+ \frac{1}{f+v^2_{0}}\left( \partial_{x}h-\partial_{x}(v^{1}_{0}V^1)+\frac{1}{2}\partial_{x}\abs{V^1}^2+f \partial_{y}v^{1}_{0}\right)}_{C^{1,\alpha}(\partial\Omega_{-})}  \nonumber \\ 
&\leq & C \left( \norm{h}_{C^{2,\alpha}(\partial\Omega_{-})}+\norm{f}_{C^{2,\alpha}(\partial\Omega)} + \epsilon \norm{V}_{C^{2,\alpha}(\Omega)}+\norm{V}^2_{C^{2,\alpha}(\Omega)} + \epsilon \norm{f}_{C^{2,\alpha}(\partial \Omega)}\right) \nonumber \\ 
&\leq & C \left( \norm{h}_{C^{2,\alpha}(\partial\Omega_{-})}+\norm{f}_{C^{2,\alpha}(\partial\Omega)} +(\epsilon M_{0}+M_{0}^2)\right) \leq 
C(\delta_0M_{0}+M_{0}^2).
\end{eqnarray}
Combining estimates \eqref{estimate:dcp:1}-\eqref{estimate:omega:1} we have that
\begin{equation}\label{estimate:gamma:2}
\norm{\Gamma(V)}_{C^{2,\alpha}(\Omega)}\leq  C(\delta_{0}M_{0}+M_{0}^2).
\end{equation}
Choosing $C\delta_{0}\leq\frac{1}{4}$ and $CM_{0}\leq \frac{1}{4}$, we obtain $\Gamma( B_{M}(\Omega))\subset  B_{M}(\Omega)$, for $M\in (0,M_{0})$.

We now claim that $B_{M}(\Omega)$ endowed with the topology $C^{1,\alpha}$ is a complete metric space which we will denote as $(B_{M}(\Omega),\norm{\cdot}_{C^{1,\alpha}})$. We also claim that $\Gamma:(B_{M}(\Omega),\norm{\cdot}_{C^{1,\alpha}})\to (B_{M}(\Omega),\norm{\cdot}_{C^{1,\alpha}})$ is a contraction mapping.

 In order to prove the first claim, it is enough to show that  $B_{M}(\Omega)$ is a closed subset of $C^{1,\alpha}(\Omega)$. Assume that the sequence $\{ V_{n}\}\subset B_{M}(\Omega)$ converges to some $V$ strongly in $C^{1,\alpha}(\Omega)$. Since  $\{ V_{n}\} $ is bounded in the $C^{2,\alpha}(\Omega)$, Arzela-Ascoli theorem implies that there exists a subsequence $\{ V_{{n}_{k}}\} $ which converges in $C^{2}(\Omega)$ to some $U$. Hence, $V=U\in C^{2}(\Omega)$ and $\{ V_{n}\}\ \to V\in C^2(\Omega)$.  Moreover, it turns out that the limit $V\in B_{M}(\Omega)$. Indeed, using the definition of the norm $\norm{V}_{C^{2,\alpha}(\Omega)}$ we have that 
$$\norm{V_{n}}_{C^{2}(\Omega)}+\frac{\abs{\partial^{\beta}V_{n}(x)-\partial^{\beta}V_{n}(y)}}{\abs{x-y}^\alpha}\leq M, \quad x,y\in\Omega, x\neq y,  \mbox{ for } |\beta|=2.$$
Using that $\partial^{\beta}V_n\to \partial^{\beta}V$ uniformly in $\Omega$ we can take the limit in the previous inequality to obtain 
$$\norm{V}_{C^{2}(\Omega)}+\frac{\abs{\partial^{\beta}V(x)-\partial^{\beta}V(y)}}{\abs{x-y}^\alpha}\leq M, \quad x,y\in\Omega, x\neq y,  \mbox{ for } |\beta|=2. $$
Taking the supremum in $x,y\in \Omega, x\neq y$ and the maximum in $\abs{\beta}$ we obtain that 
$\norm{V}_{C^{2,\alpha}}\leq M$, thus $V\in B_{M}(\Omega)$. 

We are just left to show our second claim, namely that $\Gamma:(B_{M}(\Omega),\norm{\cdot}_{C^{1,\alpha}})\to (B_{M}(\Omega),\norm{\cdot}_{C^{1,\alpha}})$ is a contraction.  For $V^{1},V^{2}\in  B_{M}(\Omega)$, we need to estimate the difference $\norm{\Gamma(V^1)-\Gamma(V^2)}_{C^{1,\alpha}(\Omega)}$. Due to the linearity  of the div-curl problem \eqref{div:curl:problem:type1} we can use inequality \eqref{estimate:DCP} and bound \eqref{estimate:difference:TP}  to get
\begin{align*}
\norm{\Gamma(V^1)-\Gamma(V^2)}_{C^{1,\alpha}(\Omega)} &\leq C \norm{\omega^1-\omega^2}_{C^{\alpha}(\Omega)} \\
&\leq C \left(\norm{\omega^1_{0}-\omega^2_{0}}_{C^{\alpha}(\partial\Omega_{-})}+ \norm{\omega_{0,1}}_{C^{1,\alpha}(\partial\Omega_{-})}\norm{V^1-V^2}_{C^{\alpha}(\Omega)}\right).
\end{align*}
Computing the difference $\omega^1_{0}-\omega^2_{0}$ using equation \eqref{omega:o:type1} we have that
\begin{eqnarray}\label{estimate:difference:0} 
\norm{\omega^1_{0}-\omega^2_{0}}_{C^{\alpha}(\partial\Omega_{-})} &=& \norm{\frac{1}{f+v_{0}^{2}}\left( -\partial_{x}(v^{1}_{0}(x)(V^{1}_{1}-V^{2}_{1}))-V^{1}_{2}\partial_{x}(V^{1}_{1}-V^{2}_{1})-(V^{1}_{1}-V^{2}_{1})\partial_{x}V^{1}_{1}\right)}_{C^\alpha(\partial\Omega_{-})} \nonumber \\
&\leq&  C \left(\delta_{0}M_{0}\norm{V^1-V^2}_{C^{1,\alpha}(\Omega)}+\delta_{0}M_{0}\norm{V^1-V^2}_{C^{1,\alpha}(\Omega)}\right).
\end{eqnarray}
On the other hand, we notice that applying bound \eqref{estimate:omega:1} yields
\begin{equation}\label{omega:0:1}
\norm{\omega_{0,1}}_{C^{1,\alpha}(\partial\Omega_{-})}\leq C(\delta_{0}M_{0}+M_{0}^2),
\end{equation}
and hence combining estimates \eqref{estimate:difference:0} and \eqref{omega:0:1} we  have shown that 
\begin{align*}
\norm{\Gamma(V^1)-\Gamma(V^2)}_{C^{1,\alpha}(\Omega)}\leq C(\delta_{0}M_{0}+M_{0}^2)\norm{V^1-V^2}_{C^{1,\alpha}}<\theta\norm{V^1-V^2}_{C^{1,\alpha}},
\end{align*}
where $\theta$ is strictly less than one for $C\delta_{0}\leq \frac{1}{4}$ and $CM_{0}\leq \frac{1}{4}$. Therefore,  $\Gamma:(B_{M}(\Omega),\norm{\cdot}_{C^{1,\alpha}})\to (B_{M}(\Omega),\norm{\cdot}_{C^{1,\alpha}})$ is a contraction mapping,  for $M\in(0,M_0)$.

Invoking Banach's fixed point theorem we can conclude that $\Gamma$ admits a unique fixed point 	$V\in B_{M}(\Omega)$, i.e. $\Gamma(V)=V$, which concludes the proof.
\end{proof}

\subsubsection{Proof of Theorem \ref{Th:case2}}\label{S:2:3}
Let  $\delta_{0}=\delta_{0}(\Omega,\alpha,L,\bar{v}^{2}_{0})$ and $M_{0}$ be the constants defined in Lemma \ref{Lemma:contraction}. Take $0<\epsilon\leq\delta_{0}$,  $K=\delta_{0}$ and $M\in (0,M_{0})$. Therefore,  Lemma \ref{Lemma:contraction} implies that  $\Gamma$ has a unique fixed point $V\in B_{M}(\Omega)$.

We claim  that $V\in B_{M}(\Omega)$ is a fixed point of the operator $\Gamma$, if an only if  $v=v_0+V$ is the velocity field which is a solution $(v,p)\in C^{{2,\alpha}}(\Omega)\times C^{{2,\alpha}}(\Omega)$ to \eqref{Euler2D:eq} and \eqref{boundary:value:type1}. Indeed, on the one hand assume that $V\in B_{M}(\Omega)$ is a fixed point of $\Gamma$ and write $v=v_0+V$. Then using the definition of the space $B_{M}(\Omega)$ \eqref{C:2:alpha}-\eqref{B:M2:alpha} the following properties hold
\[\mbox{div } v=\mbox{div } v_{0}+\mbox{div } V=0, \quad \mbox{in } \Omega, \quad v\cdot n =  v_{0}\cdot n+ V\cdot n= v_{0}\cdot n + f, \quad \mbox{on } \partial\Omega. \]
Moreover, from the last equation in \eqref{div:curl:problem:type1} we have that 
$$ \int_{\mathcal{C}}v\cdot n \ dS= \int_{\mathcal{C}}v_{0}\cdot n \ dS+J-J_{0}=J.$$
Since $V$ is fixed point of $\Gamma$  and $\nabla\times v_{0}=0$ we infer that
\begin{equation*}
\nabla\times v= \nabla\times (v_{0}+V)= \nabla \times V= \nabla\times[\Gamma(V)]= \omega,
\end{equation*}
where in the last equality we have used the first equation in \eqref{div:curl:problem:type1} with $\omega$ solving the transport type system \eqref{transport:problem:type1}-\eqref{omega:o:type1}. Hence,
\begin{equation*}
0=v\cdot\nabla\omega= v\cdot\nabla \left[ \nabla \times v \right]=\nabla \times  \left[  (v\cdot\nabla) v \right], \mbox{ in } \Omega
\end{equation*}
and $\omega_{0}= (\nabla \times v)_{0}$ in $ \partial\Omega_{-}$.
Defining the function $g$ by means of 
\begin{equation}
g=-\int_{0}^{x}(v\cdot \nabla )v (y)\cdot dy,
\end{equation}
we have that
\begin{equation}\label{function:g}
 v\cdot \nabla v=-\nabla g.
\end{equation}
Notice that since  $v=v_{0}+V\in C^{2,\alpha}(\Omega)$, we infer that $g\in C^{2,\alpha}(\Omega)$.
The integral of $g$ is computed along any curve connecting  the origin with $x$. 
We now claim  that $g$ is a uni-valued function in $\Omega=\mathbb{S}^1\times (0,L)$. To this end it suffices that check that $\int_{0}^{1} (v\cdot \nabla) v_{1}  \ dx=0$. Indeed, since $(\nabla \times v)(x,0)=-\frac{\partial V_1}{\partial y}(x,0)+\partial_{x}f$ and
\begin{equation*}
(\nabla \times v)=\partial_{x}f+ \frac{1}{f+v^2_{0}}\left( \partial_{x}h-\partial_{x}(v^{1}_{0}V^1)+\frac{1}{2}\partial_{x}\abs{V^1}^2+f \partial_{y}v^{1}_{0}\right),\  \forall x\in\partial\Omega_{-}
\end{equation*}
we can check that $v=v_0+V$ solves
\begin{equation}\label{value:at:0}
(v\cdot \nabla)v^1=-\partial_{x}(h+p_{0}), \quad  \forall x\in\partial\Omega_{-}.
\end{equation}
Thus,
\begin{equation}
\int_{0}^{1} (v\cdot \nabla) v_{1}  \ dx=- \int_{0}^{1}  \partial_{x}(h+p_{0})  \ dx=0,
\end{equation}
by periodicity in $\mathbb{S}^1$. Hence, $g$ is a uni-valued function. We define the pressure function $p$ as $p=g+g_0$, where $g_0=h(0)+p_{0}(0)$ and then using
 \eqref{function:g} it follows that 
$-\nabla p=(v\cdot\nabla)v$  and  $p(0)=h(0)+p_0(0), \ \forall x\in\partial\Omega_{-}$. Moreover, by equation \eqref{function:g} at $\partial\Omega_{-}$ and \eqref{value:at:0} we observe that
$ \partial_{x} p=\partial_{x}(h+ p_{0})$ and then $p=p_{0}+h$ for $ \forall x\in\partial\Omega_{-}$. Therefore, $p$ solves equation \eqref{Euler2D:eq} with regularity $p\in C^{2,\alpha}(\Omega)$ and  satisfies the boundary condition \eqref{boundary:value:type1}.

On the other hand, let us now assume that $v=v_{0}+V$  with $V\in B_{M}(\Omega)$ is the velocity field which solves $(v,p)$ to \eqref{Euler2D:eq} and   \eqref{boundary:value:type1} with $(v,p)\in C^{2,\alpha}(\Omega)\times C^{2,\alpha}(\Omega)$. Then, taking the curl operator to  \eqref{Euler2D:eq}  we have that
\begin{equation}\label{equation:TP:v_0+V}
(v_{0}+V)\cdot\nabla (\nabla\times V)=0, \ \mbox{in } \ \Omega.
\end{equation} 
Combining \eqref{Euler2D:eq}  and the boundary conditions \eqref{boundary:value:type1}, we obtain that
\begin{equation}\label{equation:TP:v_0+V:0}
(\nabla\times V) = \partial_{x}f+ \frac{1}{f+v^2_{0}}\left( \partial_{x}h-\partial_{x}(v^{1}_{0}V^1)+\frac{1}{2}\partial_{x}\abs{V^1}^2+f \partial_{y}v^{1}_{0}\right), \ \mbox{on} \ \partial \Omega_{-}.
\end{equation}
Therefore $\nabla \times V$ satisfies equation \eqref{transport:problem:type1} with the same boundary condition \eqref{omega:o:type1}. Integrating by characteristics it follows that $\nabla \times V=\omega$. Since $W=\Gamma(V)$ and $V$ satisfy the equations \eqref{div:curl:problem:type1} and Proposition \ref{DCP:prop} implies that the solution is unique, we have that $\Gamma(V)-V=0$ and thus  $V$ is a fixed point of the operator $\Gamma$. 

Given that the fixed point of $\Gamma$ in $B_M(\Omega)$ is unique (i.e. for $\norm{v-v_{0}}_{C^{2,\alpha}(\Omega)}\leq M$), the uniqueness of solutions $(v,p)$ satisfying \eqref{Euler2D:eq} and boundary conditions \eqref{boundary:value:type1} follows.  \quad $\square$
\subsection{Boundary value \textit{(C)}   for the 2D steady Euler equation}\label{S:32}
In this section we will deal with the boundary value \textit{(C)} and construct solutions to the Euler equation \eqref{Euler2D:eq} satisfying those boundary conditions.  However, fixing two arbitrary pressure values on the boundaries $\partial\Omega_{-}$ and $\partial\Omega_{+}$ is not possible in general and some compatibility condition is needed (cf. Theorem \ref{Th:case2:C2} below). Nevertheless, the problem \eqref{Euler2D:eq}with the following  boundary conditions is solvable for a large class of functions $h^{-},h^{+},f^{-}$:
\begin{equation}\label{new:boundary:C}
p=h^{-} \mbox{ on }  \partial\Omega_{-}, \partial_{x}p=\partial_{x}h^{+}  \mbox{ on } \partial\Omega_{+} \mbox{ and } v\cdot n= f^{-}  \mbox{ on }  \partial\Omega_{-}.
\end{equation} 
We then have the following result:
\begin{theorem}\label{Th:case2:C}
 Let $\Omega=\{ (x,y)\in \mathbb{S}^{1}\times (0,L)\}$, with $L>0$ and $\alpha\in (0,1)$.  Suppose that $(v_0,p_0)\in C^{{2,\alpha}}(\Omega)\times C^{{2,\alpha}}(\Omega)$ is a solution of \eqref{Euler2D:eq} with $\bar{v}^{2}_{0}=\displaystyle\inf_{(x,y)\in\bar{\Omega}} \abs{v^2_{0}(x,y)}> 0$ and $\mbox{curl }v_0=0$. For $\mathcal{C}= \{(0,y), y\in[0,L] \}$ we have that the integral $\int_{\mathcal{C}} (v_{0}\cdot n) \ dS$ is a real  constant that we will denote as $J_0$. There exist $\epsilon>0$ , $M>0$ sufficiently small as well as $K>0$ such that for 
$v_0$ as above with $ \norm{v^1_0}_{C^{2,\alpha}(\Omega)}\leq\epsilon $ and  $h^{-}\in C^{{2,\alpha}}(\partial \Omega_{-})$,  $h^{+}\in C^{{2,\alpha}}(\partial \Omega_{+}), f^{-}\in C^{{2,\alpha}}(\partial \Omega_{-})$ and $J\in \mathbb{R}$  satisfying 
\begin{equation}\label{smallnes:bound:C}
\norm{h^-}_{C^{{2,\alpha}}(\partial \Omega_{-})} + \norm{h^+}_{C^{{2,\alpha}}(\partial \Omega_{+})}+\norm{f^{-}}_{C^{{2,\alpha}}(\partial \Omega_{-})}+ \abs{J-J_0} \leq KM ,
\end{equation} 
there exists a unique $(v,p)\in C^{{2,\alpha}}(\Omega)\times C^{{2,\alpha}}(\Omega)$ to \eqref{Euler2D:eq}  with $\norm{v-v_0}_{C^{2,\alpha}(\Omega)}\leq M$ such that
\begin{equation}\label{boundary:value:type1:C}
p =p_{0}+h^- \ \mbox{on} \ \partial \Omega_{-},  \partial_{x}p =\partial_{x}p_{0}+\partial_{x}h^{+} \mbox{on} \ \partial \Omega_{+}, \ v\cdot n=v_{0}\cdot n+f ^{-} \ \mbox{on} \ \partial \Omega_{-} \mbox{ and } \int_{\mathcal{C}}v\cdot n \ dS= J.
\end{equation}
The constants $M,K,$ as well as $\epsilon,$ depend  only on $\alpha,L, \bar{v}^{2}_{0}$.
\end{theorem}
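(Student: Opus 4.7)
The strategy closely mirrors the proof of Theorem \ref{Th:case2}. The crucial new feature is that the normal component of the velocity on $\partial\Omega_+$ is no longer prescribed and becomes an implicit unknown $f^+ := V^2|_{\partial\Omega_+}$, to be determined from the new boundary condition $\partial_x p = \partial_x(p_0 + h^+)$ on $\partial\Omega_+$. I would look for $v = v_0 + V$ with $V \in B_M(\Omega)\subset C_\star^{2,\alpha}(\Omega)$ and construct an operator $\Gamma: B_M(\Omega) \to B_M(\Omega)$ whose unique fixed point gives the solution.

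Given $V \in B_M(\Omega)$, the operator $\Gamma(V)$ would be built in three stages. Stage (i): the pressure condition $p = p_0 + h^-$ on $\partial\Omega_-$ combined with Euler evaluated at $y = 0$ gives the inflow vorticity via the same algebraic identity as in case \textit{(B)}, namely
\[
\omega_0 = \partial_x f^- + \frac{1}{f^- + v_0^2}\!\left(-\partial_x h^- + \partial_x(v_0^1 V^1) + \tfrac{1}{2}\partial_x|V^1|^2 + f^-\partial_y v_0^1\right),
\]
and Proposition \ref{TP:prop} then yields $\omega \in C^{1,\alpha}(\Omega)$ solving $(v_0+V)\cdot\nabla\omega = 0$ with $\omega|_{\partial\Omega_-}=\omega_0$. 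Stage (ii): to extract $f^+$, I would use $v\times\omega = \nabla H$ to rewrite the $\partial\Omega_+$ pressure condition as $(v_0^2 + f^+)\omega = \partial_x h^+ + \partial_x(v_0\cdot V) + \tfrac{1}{2}\partial_x|V|^2$ on $\partial\Omega_+$. Expanding $\partial_x(v_0\cdot V)$ at $y = L$ and isolating the derivative of $f^+$ turns this into a linear first-order ODE on $\mathbb{S}^1$,
\[
v_0^2\,\partial_x f^+ + (\partial_x v_0^2 - \omega)\,f^+ = v_0^2\,\omega - \partial_x h^+ - (\partial_x v_0^1)V^1 - v_0^1\partial_x V^1 - \tfrac{1}{2}\partial_x|V|^2,
\]
whose leading coefficient $v_0^2$ is bounded below by $\bar v_0^{2}/2$. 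Under the smallness assumption \eqref{smallnes:bound:C}, this ODE is uniquely solvable for $f^+\in C^{2,\alpha}(\partial\Omega_+)$ together with the mean-value compatibility $\int f^+ = \int f^-$ needed at stage (iii); the mean-value matching is also forced at the fixed point by $\mathrm{div}\, v = 0$. Stage (iii): Proposition \ref{DCP:prop} applied with boundary data $v_0\cdot n + f^-$ on $\partial\Omega_-$, $v_0\cdot n + f^+$ on $\partial\Omega_+$, and prescribed flux $J - J_0$ along $\mathcal{C}$ yields $W \in C^{2,\alpha}(\Omega)$; set $\Gamma(V) := W$.

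The remaining steps closely follow Subsections \ref{S:2:2}-\ref{S:2:3}. Combining the estimates of Propositions \ref{TP:prop}-\ref{DCP:prop} with the Lipschitz dependence of $f^+$ on $V$ (coming from the ODE in stage (ii)) would yield $\Gamma(B_M)\subset B_M$ and a contraction property in the weaker $C^{1,\alpha}$ topology, so Banach's fixed point theorem produces a unique $V \in B_M(\Omega)$. The pressure $p$ is then recovered by integrating $-(v\cdot\nabla)v$ along paths in $\Omega$; single-valuedness on $\mathbb{S}^1\times(0,L)$ follows from the periodicity and from the definition of $\omega_0$ exactly as in the proof of Theorem \ref{Th:case2}. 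By construction $p = p_0 + h^-$ on $\partial\Omega_-$, and the condition $\partial_x p = \partial_x p_0 + \partial_x h^+$ on $\partial\Omega_+$ is precisely the ODE satisfied by $f^+$.

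The main obstacle is stage (ii). Solving $v^2 \omega = \partial_x H$ on $\partial\Omega_+$ pointwise for $f^+$ by division is forbidden because $\omega$ is of the same order as the boundary data, which would preclude an $O(M)$ bound on $f^+$. Recasting as a differential equation isolates the nondegenerate coefficient $v_0^2$ so that $f^+$ inherits the smallness of the data. A secondary delicate issue is to guarantee that the mean value of $f^+$ agrees with $\int f^-$ uniformly along the iteration so that the div-curl problem remains solvable at every step; that this holds for small enough data is the reason no explicit compatibility condition analogous to \eqref{f:condition} appears in the hypotheses of the theorem.
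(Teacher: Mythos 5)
Your overall architecture coincides with the paper's: the inflow vorticity $\omega_0$ is obtained from the pressure condition on $\partial\Omega_-$ exactly as in case \textit{(B)}, the vorticity is propagated by Proposition \ref{TP:prop}, the velocity is recovered by Proposition \ref{DCP:prop}, and the contraction is closed in the weaker $C^{1,\alpha}$ topology. You also correctly identify the genuinely new difficulty, namely that the normal trace $f^+$ on $\partial\Omega_+$ must be extracted from $\partial_x p=\partial_x p_0+\partial_x h^+$ rather than divided out of $v^2\omega=\partial_x H$. (The paper carries $f^+$ as a second fixed-point unknown in $\hat B_M\subset C^{2,\alpha}(\Omega)\times C^{2,\alpha}(\partial\Omega_+)$ rather than eliminating it in favour of $V$, but that is a cosmetic difference.)

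The gap is in your stage (ii). The periodic ODE you propose, $v_0^2\,\partial_xf^++(\partial_xv_0^2-\omega)f^+=R$, is \emph{not} uniquely solvable on $\mathbb{S}^1$ under the smallness assumptions --- smallness makes matters worse, not better. Its principal part is $\partial_x(v_0^2f^+)$, whose range consists of the zero-mean functions and whose kernel is spanned by $1/v_0^2$; the perturbed equation has monodromy $\exp\bigl(\int_0^1\omega/v_0^2\,dx\bigr)$, which tends to $1$ as the data shrink, so the periodic solution operator (when it exists) has norm of order $\abs{\int_0^1\omega/v_0^2\,dx}^{-1}$ and no uniform $O(M)$ bound on $f^+$ is available. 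Relatedly, if the periodic solution were unique there would be no freedom left to impose $\int f^+=\int f^-$, which you need at \emph{every} iterate for the div--curl step, not only at the fixed point; the appeal to $\mathrm{div}\, v=0$ is circular there, and smallness of the data cannot force two integrals to coincide. The paper sidesteps both problems by not solving an ODE at all: it solves the boundary relation algebraically for $\partial_x\tilde f^+$ with all the nonlinear and $f^+$-dependent terms frozen at the previous iterate (see \eqref{equation:tilda:f}), so that $\tilde f^+$ is given by explicit quadrature, and it uses the free constant of integration $\tilde f^+(0,L)$, fixed in \eqref{constant:f:div}, precisely to enforce the mean compatibility with $f^-$. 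Your scheme could be repaired along the same lines (treat the degenerate ODE via the Fredholm alternative, using the kernel direction $C/v_0^2$ to adjust the mean), but as written the unique-solvability claim and the mean-matching claim are both unjustified.
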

The proof follows the same lines as the proof of Theorem \ref{Th:case2}, hence we will only highlight the main modifications and important points towards the proof.  To that purpose,  let us start by defining the new functional spaces where we will perform the fixed point argument. 

For any,  $M>0$, let us denote by $\hat{B}_{M}$ the closed ball of in $C^{2,\alpha}(\Omega)\times C^{2,\alpha}(\partial\Omega_{+})$ with radius $M$, i.e.,  
\begin{equation}\label{B:M2:alpha:C}
\hat{B}_{M}=\{ g=(g_{1},g_{2})\in C^{2,\alpha}(\Omega)\times C^{2,\alpha}(\partial\Omega_{+}): \norm{g}_{C^{2,\alpha}(\Omega)\times C^{2,\alpha}(\partial\Omega_{+})}\leq M \}.
\end{equation}

We define the operator $\Gamma: \hat{B}_{M} \to C^{2,\alpha}(\Omega)\times C^{2,\alpha}(\partial\Omega_{+})$ in two steps. First, given $(V,f^{+})\in \hat{B}_{M}$ we define $\omega\in C^{1,\alpha}(\Omega)$ solving the following the transport type problem 
\begin{equation}\label{transport:problem:type2}
\left\lbrace
\begin{array}{lll}
(v_{0}+V)\cdot\nabla \omega =0, \ \mbox{in} \ \Omega \\
\omega = \omega_{0}, \ \mbox{on} \ \partial \Omega_{-}
\end{array} \right.
\end{equation} 
with  $\omega_0$ given by
\begin{equation}\label{omega:o:type2}
\omega_{0}=\partial_{x}f^{-}+ \frac{1}{f^{-}+v^2_{0}}\left( -\partial_{x}h^{-}+\partial_{x}(v^{1}_{0}V^1)+\frac{1}{2}\partial_{x}\abs{V^1}^2+f^{-} \partial_{y}v^{1}_{0}\right),\  \forall x\in\partial\Omega_{-},
\end{equation}
where $v_{0}=(v^1_{0},v^2_{0})$ and $V=(V^1,V^2)$.  As a second step, we define $(W,\tilde{f}^{+})\in C^{2,\alpha}(\Omega)\times C^{2,\alpha}(\partial\Omega_{+})$ as
\begin{equation}\label{equation:tilda:f}
\tilde{f}^{+}=\int_{0}^{x} \omega(x',L) +\frac{1}{v^{2}_{0}+f^{+}}\left( \partial_{x}h^{+}  -\partial_{x}(v^{1}_{0}V^1)-\frac{1}{2}\partial_{x}\abs{V^1}^2-f^{+}\partial_{y} v^1_{0} \right)  \ dx' +\tilde{f}^{+}(0,L), \ 
\end{equation}
where the constant $\tilde{f}^{+}(0,L)$ is given by
\begin{equation}\label{constant:f:div}
\tilde{f}^{+}(0,L)=\int_{0}^{1}f^{-}(x) dx- \int_{0}^{1}\mathcal{T}(x)(1-x) \ dx \\
\end{equation}
with $$ \mathcal{T}(x) = \omega +\frac{1}{v^{2}_{0}+f^{+}}\left( \partial_{x}h^{+}  -\partial_{x}(v^{1}_{0}V^1)-\frac{1}{2}\partial_{x}\abs{V^1}^2-f^{+}\partial_{y} v^1_{0} \right).$$ The function $W$ is  the unique solution to the following div-curl problem
 \begin{equation}\label{div:curl:problem:type2}
\left\lbrace
\begin{array}{lll}
\nabla\times W= \omega, \ \mbox{in} \ \Omega \\
\mbox{div } W=0, \ \mbox{in} \ \Omega \\ 
 W\cdot n=\tilde{f}^{+}, \ \mbox{on} \ \partial \Omega_{+} \\ 
 W\cdot n=f^{-}, \ \mbox{on} \ \partial \Omega_{-} \\ 
\displaystyle \int_{\mathcal{C}} W\cdot n \  dS=J-J_{0}.
\end{array} \right.
\end{equation} 
Thus we define $\Gamma(V,f^{+})=(W,\tilde{f}^{+})$.

\subsubsection{Proof of Theorem \ref{Th:case2:C}}
Similarly as in Theorem \ref{Th:case2}, we will show that the operator $\Gamma$ has a fixed point $(V, f^+)$ such that $v=v_{0}+V$ is a solution to \eqref{Euler2D:eq} and \eqref{boundary:value:type1:C}.  To that purpose let $v_0$  be as in Theorem \ref{Th:case2:C}, $h^{-}\in C^{{2,\alpha}}(\partial \Omega_{-})$,  $h^{+}\in C^{{2,\alpha}}(\partial \Omega_{+})$ and $f^{-}\in C^{{2,\alpha}}(\partial \Omega_{-})$ satisfying \eqref{f:condition}. We will show that there exists $\delta_{0}=\delta_{0}(\Omega,\alpha, L, \bar{v}^{2}_{0})$ and $M_{0}\in (0, \frac{\bar{v}^{2}_{0}}{2})$ such that if $M\in (0, M_{0})$ and 
\begin{equation}\label{smallness:case2:C}
\epsilon \leq \delta_{0}, \norm{h^-}_{C^{{2,\alpha}}(\partial \Omega_{-})} + \norm{h^+}_{C^{{2,\alpha}}(\partial \Omega_{+})}+\norm{f^{-}}_{C^{{2,\alpha}}(\partial \Omega_{-})}\leq M_{0}\delta_{0}  \mbox{ and } \abs{J-J_{0}} \leq M_{0}\delta_{0},
\end{equation}
then $\Gamma( \hat{B}_{M}(\Omega))\subset  \hat{B}_{M}(\Omega)$.  Moreover, the operator $\Gamma$ has a unique fixed point in $\hat{B}_{M}(\Omega).$ 

First, let us show that the operator $\Gamma$ maps $ (V,f^{+})\in \hat{B}_{M}$ into itself.  Using inequality \eqref{estimate:DCP} with $j=J-J_{0}$ and then \eqref{estimate:TP}, we have that
\begin{eqnarray}\label{estimate:dcp:2}
\norm{\Gamma(V,f^{+})}_{C^{2,\alpha}}&\leq & C (\norm{\omega}_{C^{1,\alpha}(\Omega)}+\norm{f^{-}}_{C^{2,\alpha}(\partial \Omega_{-})}+\norm{\tilde{f}^{+}}_{C^{2,\alpha}(\partial \Omega_{+})}+\abs{J-J_{0}})  \\
&\leq & C (\norm{\omega_0}_{C^{1,\alpha}(\partial\Omega_{-})}+\norm{f^-}_{C^{2,\alpha}(\partial \Omega_{-})}+\norm{\tilde{f}^{+}}_{C^{2,\alpha}(\partial \Omega_{+})}+\abs{J-J_{0}}) \nonumber
\end{eqnarray}

On the one hand, since $(V,f^{+})\in \hat{B}_{M}(\Omega)$ and \eqref{smallness:case2:C}  the function $\omega_{0}$ defined in  \eqref{omega:o:type2} can be estimated as in \eqref{estimate:omega:1}
\begin{eqnarray}\label{estimate:omega:2}
\norm{\omega_{0}}_{C^{1,\alpha}(\partial\Omega_{-})}\ &=&\norm{\partial_{x}f^{-}+ \frac{1}{f^{-}+v^2_{0}}\left( \partial_{x}h^{-}-\partial_{x}(v^{1}_{0}V^1)+\frac{1}{2}\partial_{x}\abs{V^1}^2+f^{-} \partial_{y}v^{1}_{0}\right)}_{C^{1,\alpha}(\partial\Omega_{-})} \nonumber \\ 
 &\leq & C(\delta_0M+M^2).
\end{eqnarray}
On the other hand,  using \eqref{equation:tilda:f}, \eqref{estimate:TP} and bound \eqref{estimate:omega:2} we have that
\begin{eqnarray}\label{bound:tild:f}
\norm{\tilde{f^{+}}}_{C^{2,\alpha}(\partial\Omega_{+})} &\leq & C\bigl(\norm{\omega_{0}}_{C^{1,\alpha}(\partial\Omega_{-})}+ \norm{h^+}_{C^{2,\alpha}(\partial\Omega_{+})} + \epsilon \norm{V}_{C^{2,\alpha}(\Omega)} +  \norm{V}^2_{C^{2,\alpha}(\Omega)} \nonumber \\
&& \ + \epsilon \norm{f^{+}}_{C^{2,\alpha}(\partial \Omega_{+})}+\norm{f^-}_{C^{2,\alpha}}  \bigr) \leq C\left(\delta_0M+M^2 \right).
\end{eqnarray}
Combining estimates \eqref{estimate:dcp:2}-\eqref{bound:tild:f} we have that
\begin{eqnarray}
\norm{\Gamma(V,f^{+})}_{C^{2,\alpha}} &\leq &  C\left(\delta_0M+M^2+ \norm{f^{-}}_{C^{2,\alpha}(\partial \Omega_{-})}+\abs{J-J_{0}} \right) \nonumber \\
&\leq & C\left(\delta_0M+M^2 \right),
\end{eqnarray}
where we have used the smallness assumption \eqref{smallness:case2:C} in the last inequality.
Choosing $C\delta_{0}\leq\frac{1}{4}$ and $CM_{0}\leq \frac{1}{4}$, we obtain $\Gamma( \hat{B}_{M}(\Omega))\subset  \hat{B}_{M}(\Omega)$, for $M \in (0,M_{0})$.

By mimicking the arguments of Lemma \ref{Lemma:contraction},  we can show that $\hat{B}_{M}(\Omega)$ endowed with the topology $C^{1,\alpha}\times C^{1,\alpha}$ is a complete metric space denoted by $(\hat{B}_{M}, \norm{\cdot}_{C^{1,\alpha}(\Omega)\times C^{1,\alpha}(\partial\Omega_{+})})$.  We claim that $\Gamma: (\hat{B}_{M}, \norm{\cdot}_{C^{1,\alpha}(\Omega)\times C^{1,\alpha}(\partial\Omega_{+})}) \to (\hat{B}_{M}, \norm{\cdot}_{C^{1,\alpha}(\Omega)\times C^{1,\alpha}(\partial\Omega_{+})})$ is a contraction mapping. 

To that purpose, we need to estimate the difference $\norm{\Gamma(V^{1}, f^{1,+})-\Gamma(V^{2},f^{2,+})}_{C^{1,\alpha}}$ for $(V^{1}, f^{1,+}),(V^{2},f^{2,+})\in  \hat{B}_{M}(\Omega)$. Using the linearity of problem \eqref{div:curl:problem:type2},  we have that bound \eqref{estimate:DCP} yields
\begin{equation}\label{difference:bound:2}
\norm{\Gamma(V^{1}, f^{1,+})-\Gamma(V^{2},f^{2,+})}_{C^{1,\alpha}(\Omega)} \leq C \norm{\omega^1-\omega^2}_{C^{\alpha}(\Omega)} + \norm{\tilde{f}^{1,+}-\tilde{f}^{2,+}}_{C^{1,\alpha}(\partial\Omega_{+})}.
\end{equation}
Using \eqref{estimate:difference:0}, \eqref{omega:0:1} and the smallness assumption \eqref{smallness:case2:C} we infer that the difference $\omega^1-\omega^2$ can be bounded by
\begin{equation}\label{difference:bound:4}
\norm{\omega^1-\omega^2}_{C^{\alpha}(\Omega)} \leq C \left(\delta^2_{0}M+\delta_{0}M+M^2\right)\norm{V^1-V^2}_{C^{1,\alpha}(\Omega)}.
\end{equation}

To estimate the latter term on the right hand side in \eqref{difference:bound:2}, we notice that using \eqref{equation:tilda:f}  
\begin{equation*}
\norm{\tilde{f}^{1,+}-\tilde{f}^{2,+}}_{C^{1,\alpha}(\partial\Omega_{+})}= I_{1}+I_{2}
\end{equation*}
with 
\begin{eqnarray*}
I_1=  \norm{\displaystyle\int_{0}^{x}\omega^{1}(s,L)-\omega^2(s,L) \ ds}_{C^{1,\alpha}(\partial\Omega_{+})} &\leq& \norm{\omega^{1}-\omega^{2}}_{C^{\alpha}(\partial\Omega_{+})} \\
&\leq & C \left(\delta^2_{0}M+\delta_{0}M+M^2\right)\norm{V^1-V^2}_{C^{1,\alpha}(\Omega)}
\end{eqnarray*}
and 
$$
I_2 = \norm{\displaystyle\int_{0}^{x} \frac{1}{(v^{2}_{0}+f^{1,+})-(v^{2}_{0}+f^{2,+})} \mathcal{M}(s)\ ds}_{C^{1,\alpha}(\partial\Omega_{+})} \leq C\norm{\mathcal{M}}_{C^{\alpha}(\partial \Omega_{+} )}.
$$
To bound the last term $I_2$, we have used estimate \eqref{estimate:phi:h} with $\mathcal{H}(x)= \frac{1}{(v^{2}_{0}+f^{1,+})-(v^{2}_{0}+f^{2,+})} \mathcal{M}(x)$ where
$$\mathcal{M}(x)= \left(-\partial_{x}(v^{1}_{0}(x)(V^{1}_{1}-V^{2}_{1}))-V^{1}_{2}\partial_{x}(V^{1}_{1}-V^{2}_{1})-(V^{1}_{1}-V^{2}_{1})\partial_{x}V^{1}_{1}-\partial_{y}v^{1}_{0}(f^{1,+}-f^{2,+})\right).$$
Hence, by the smallness assumption \eqref{smallness:case2:C}
\begin{equation}\label{difference:bound:6}
\norm{\tilde{f}^{1,+}-\tilde{f}^{2,+}}_{C^{1,\alpha}(\partial\Omega_{+})} \leq C \left((\delta_{0}M+M^2)\norm{V^1-V^2}_{C^{1,\alpha}(\Omega)}+\delta_0 M_{0} \norm{f^{1,+}-f^{2,+}}_{C^{1,\alpha}(\partial\Omega_{+})}\right).
\end{equation}
Therefore,  collecting \eqref{difference:bound:4}-\eqref{difference:bound:6} yields
\begin{eqnarray}\label{difference:bound:7}
\norm{\Gamma(V^{1}, f^{1,+})-\Gamma(V^{2},f^{2,+})}_{C^{1,\alpha}(\Omega)} &\leq & C \left((\delta_{0}M_{0}+M_{0}^2)\norm{V^1-V^2}_{C^{1,\alpha}(\Omega)}+\delta_0 M_{0} \norm{f^{1,+}-f^{2,+}}_{C^{1,\alpha}(\partial\Omega_{+})}\right) \nonumber \\
&< & \theta(\norm{V^1-V^2}_{C^{1,\alpha}}+\norm{f^{1,+}-f^{2,+}}_{C^{1,\alpha}}) 
\end{eqnarray}
where $\theta$ is strictly less than one for  $C\delta_{0}\leq\frac{1}{4}$ and $CM_{0}\leq \frac{1}{4}$ that  $\Gamma: (\hat{B}_{M}, \norm{\cdot}_{C^{1,\alpha}(\Omega)\times C^{1,\alpha}(\partial\Omega_{+})}) \to (\hat{B}_{M}, \norm{\cdot}_{C^{1,\alpha}(\Omega)\times C^{1,\alpha}(\partial\Omega_{+})})$ is a contraction mapping. Invoking Banach's fixed point theorem we can conclude that $\Gamma$ admits a unique fixed point 	$(V,f^{+})\in \hat{B}_{M}(\Omega)$, i.e. $\Gamma(V,f^{+})=(V,f^{+})$, which concludes the proof.

To conclude the proof of Theorem \ref{Th:case2:C}, we need to check that $(V,f^{+})\in \hat{B}_{M}(\Omega)$ is a fixed point of the operator $\Gamma$ if an only if $v=v_0+V$ is the velocity field which is a solution $(v,p)\in C^{{2,\alpha}}(\Omega)\times C^{{2,\alpha}}(\Omega)$ to \eqref{Euler2D:eq} and \eqref{boundary:value:type1:C}.

Assuming that $(V,f^{+})\in \hat{B}_{M}(\Omega)$ is a fixed point of the operator $\Gamma$ we can conclude by repeating the arguments of the proof of Theorem \ref{Th:case2} that 
\[\mbox{div } v=0, \quad \mbox{in } \Omega, \quad v\cdot n =  v_{0}\cdot n+ f^{-}, \mbox{in } \partial \Omega_{-}, \mbox{and }  \int_{\mathcal{C}}v\cdot n \ dS=J.\]
Moreover, $v$ satisfies the vorticity formulation of Euler equation
\begin{equation*}
0=v\cdot\nabla\omega= v\cdot\nabla \left[ \nabla \times v \right]=\nabla \times  \left[  (v\cdot\nabla) v \right], \mbox{ in } \Omega
\end{equation*}
and $\omega_{0}= (\nabla \times v)_{0}$ in $ \partial\Omega_{-}$. The difference with respect to Theorem \ref{Th:case2} relies on how to construct a pressure field $p$ which satisfies the boundary conditions \eqref{boundary:value:type1:C}. By similar arguments as the ones in Theorem \ref{Th:case2}, defining a uni-valued function $g$ in $\Omega=\mathbb{S}^1\times (0,L)$ as 
\begin{equation}\label{function:g:2:integral}
g=-\int_{0}^{x}(v\cdot \nabla )v (y)\cdot dy,
\end{equation}
we have that
\begin{equation}\label{function:g:2}
 v\cdot \nabla v=-\nabla g,
\end{equation}
with $g\in C^{2,\alpha}(\Omega)$. Defining $p=g+g_{0}$ with  $g_{0}=h^{-}(0)+p_{0}(0)$ we can check using \eqref{function:g:2:integral}-\eqref{function:g:2} that $p=p_{0}+h^{-}$ for $x\in \partial\Omega_-$ and $\partial_{x}p=\partial_{x}p_{0}+\partial_{x}h^{+}$ for $x\in \partial\Omega_+$. Hence, $p$ solves equation \eqref{Euler2D:eq} with regularity $p\in C^{2,\alpha}(\Omega)$ and satisfies the boundary condition \eqref{boundary:value:type1:C}. $\square$ \\

In order to solve the boundary value problem \textit{(C)} for the Euler equation as stated in Table \ref{tab:table1} (i.e a boundary problem for $p \text{ on }\partial \Omega, v\cdot n \text{ on } \partial\Omega_{-}$ and the flux $J$) we need to impose a compatibility condition on the boundary values $h^{-}, h^{+}, f{-}$ as well as in the flux $J$. To that purpose, we define a subset $\mathcal{S}_{KM}\subset C^{2,\alpha}(\Omega_{-}) \times C^{2,\alpha}(\Omega_{+})\times C^{2,\alpha}(\Omega_{-})\times \mathbb{R}$ given by 
\begin{equation}
\mathcal{S}_{KM}=\{ (h^{-},h^{+},f^{-},J) \text{ satisfying } \eqref{smallnes:bound:C} \},
\end{equation}
and the operator $\Lambda:\mathcal{S}_{K,M}\to \mathbb{R}$ given by $\Lambda(h^{-},h^{+},f^{-},J) = p(0,L)-p_{0}(0,L)$ where $p$ is the pressure function obtained in Theorem \ref{Th:case2:C}. Notice that by construction $\Lambda(h^{-},h^{+}+a,f^{-},J)=\Lambda(h^{-},h^{+},f^{-},J)$ for any real constant $a$. The definition of the operator $\Lambda$ suggest that the following compatibility condition is required to solve the boundary value problem (\textit{C}) 
\begin{equation}\label{compatibility}
h^{+}(0)= \Lambda(h^{-},h^{+},f^{-},J).
\end{equation}
More precisely the following theorem holds:
\begin{theorem}\label{Th:case2:C2}
 Let $\Omega=\{ (x,y)\in \mathbb{S}^{1}\times (0,L)\}$, with $L>0$ and $\alpha\in (0,1)$.  Suppose that $(v_0,p_0)\in C^{{2,\alpha}}(\Omega)\times C^{{2,\alpha}}(\Omega)$ is a solution of \eqref{Euler2D:eq} with $\bar{v}^{2}_{0}=\displaystyle\inf_{(x,y)\in\bar{\Omega}} \abs{v^2_{0}(x,y)}> 0$ and  $\mbox{curl }v_0=0$. For $\mathcal{C}= \{(0,y), y\in[0,L] \}$ we have that the integral $\int_{\mathcal{C}} (v_{0}\cdot n) \ dS$ is a real  constant that we will denote as $J_0$. There exist $\epsilon>0$ , $M>0$ sufficiently small as well as $K>0$ such that for 
$v_0$ as above with $ \norm{v^1_0}_{C^{2,\alpha}(\Omega)}\leq\epsilon $ and  $h^{-}\in C^{{2,\alpha}}(\partial \Omega_{-})$,  $h^{+}\in C^{{2,\alpha}}(\partial \Omega_{+}), f^{-}\in C^{{2,\alpha}}(\partial \Omega_{-})$ and $J\in \mathbb{R}$  satisfying 
\begin{equation}
\norm{h^-}_{C^{{2,\alpha}}(\partial \Omega_{-})} + \norm{h^+}_{C^{{2,\alpha}}(\partial \Omega_{+})}+\norm{f^{-}}_{C^{{2,\alpha}}(\partial \Omega_{-})}+ \abs{J-J_0} \leq KM ,
\end{equation} 
there exists a unique $(v,p)\in C^{{2,\alpha}}(\Omega)\times C^{{2,\alpha}}(\Omega)$ to \eqref{Euler2D:eq}  with $\norm{v-v_0}_{C^{2,\alpha}(\Omega)}\leq M$ such that
\begin{equation}\label{boundary:value:type1:C2}
p =p_{0}+h^- \ \mbox{on} \ \partial \Omega_{-},  p =p_{0}+h^{+} \mbox{on} \ \partial \Omega_{+}, \ v\cdot n=v_{0}\cdot n+f ^{-} \ \mbox{on} \ \partial \Omega_{-} \mbox{ and } \int_{\mathcal{C}}v\cdot n \ dS= J
\end{equation}
if and only if 
\begin{equation}\label{compatibility2}
h^{+}(0)= \Lambda(h^{-},h^{+},f^{-},J).
\end{equation}
\end{theorem}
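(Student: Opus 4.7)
The plan is to deduce Theorem \ref{Th:case2:C2} directly from Theorem \ref{Th:case2:C} by exploiting the observation that on the circle $\partial \Omega_{+}\cong \mathbb{S}^{1}$ the derivative condition $\partial_{x}p=\partial_{x}p_{0}+\partial_{x}h^{+}$ determines $p-p_{0}-h^{+}$ up to an additive constant, and the operator $\Lambda$ is precisely the function of the data that records this constant.

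For the forward implication, I would start by assuming that $(v,p)\in C^{2,\alpha}(\Omega)\times C^{2,\alpha}(\Omega)$ solves \eqref{Euler2D:eq} with the boundary conditions \eqref{boundary:value:type1:C2} and $\norm{v-v_{0}}_{C^{2,\alpha}(\Omega)}\leq M$. Since $p=p_{0}+h^{+}$ on $\partial \Omega_{+}$ certainly implies $\partial_{x}p=\partial_{x}p_{0}+\partial_{x}h^{+}$ on $\partial \Omega_{+}$, the pair $(v,p)$ fulfils the boundary conditions \eqref{boundary:value:type1:C} as well. By the uniqueness part of Theorem \ref{Th:case2:C}, this $(v,p)$ coincides with the solution constructed there, and therefore by the very definition of $\Lambda$ we have $\Lambda(h^{-},h^{+},f^{-},J)=p(0,L)-p_{0}(0,L)=h^{+}(0)$, which is \eqref{compatibility2}.

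For the converse, suppose that the compatibility condition \eqref{compatibility2} holds. I would invoke Theorem \ref{Th:case2:C} to produce a solution $(v,p)\in C^{2,\alpha}(\Omega)\times C^{2,\alpha}(\Omega)$ with $\norm{v-v_{0}}_{C^{2,\alpha}(\Omega)}\leq M$ satisfying the derivative version \eqref{boundary:value:type1:C}. Define $q(x):=p(x,L)-p_{0}(x,L)-h^{+}(x)$ on $\partial \Omega_{+}$. Then $\partial_{x}q\equiv 0$ by construction, so $q$ is constant on $\mathbb{S}^{1}$. Evaluating at $x=0$ and using the definition of $\Lambda$ together with \eqref{compatibility2} gives
\begin{equation*}
q(0)=p(0,L)-p_{0}(0,L)-h^{+}(0)=\Lambda(h^{-},h^{+},f^{-},J)-h^{+}(0)=0,
\end{equation*}
so $p=p_{0}+h^{+}$ on the whole of $\partial \Omega_{+}$, and the remaining boundary conditions in \eqref{boundary:value:type1:C2} are already contained in \eqref{boundary:value:type1:C}.

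Finally, uniqueness in Theorem \ref{Th:case2:C2} is inherited directly from Theorem \ref{Th:case2:C}, since any $(v,p)$ solving \eqref{boundary:value:type1:C2} with $\norm{v-v_{0}}_{C^{2,\alpha}(\Omega)}\leq M$ automatically solves the less restrictive boundary value problem \eqref{boundary:value:type1:C}, for which uniqueness in the ball of radius $M$ has already been established. There is no genuine analytical obstacle here: the only point that needs a brief justification is that the operator $\Lambda$ is well defined, which is immediate from the existence and uniqueness statement of Theorem \ref{Th:case2:C} applied to the parameters $(h^{-},h^{+},f^{-},J)\in \mathcal{S}_{KM}$.
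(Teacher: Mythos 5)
Your proposal is correct and follows essentially the same route as the paper: both arguments invoke Theorem \ref{Th:case2:C} to produce the unique solution satisfying the derivative condition $\partial_{x}p=\partial_{x}p_{0}+\partial_{x}h^{+}$ on $\partial\Omega_{+}$, and then observe (by integrating that condition, equivalently by noting that $q=p-p_{0}-h^{+}$ is constant on $\partial\Omega_{+}$) that the full equality $p=p_{0}+h^{+}$ holds there precisely when the constant $\Lambda(h^{-},h^{+},f^{-},J)-h^{+}(0)$ vanishes. Your write-up is, if anything, slightly more explicit than the paper's about the two directions of the equivalence and the well-definedness of $\Lambda$.
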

\begin{proof}[Proof of Theorem \ref{Th:case2:C2}]
By Theorem \ref{Th:case2:C}, we have that that there exists a unique solution $(v,p)\in C^{{2,\alpha}}(\Omega)\times C^{{2,\alpha}}(\Omega)$ to \eqref{Euler2D:eq} satisfying the boundary conditions \eqref{boundary:value:type1:C}, this is 
$$ p =p_{0}+h^- \ \mbox{on} \ \partial \Omega_{-},  \partial_{x}p =\partial_{x}p_{0}+\partial_{x}h^{+} \mbox{on} \ \partial \Omega_{+}, \ v\cdot n=v_{0}\cdot n+f ^{-} \ \mbox{on} \ \partial \Omega_{-} \mbox{ and } \int_{\mathcal{C}}v\cdot n \ dS= J.$$
Integrating the pressure boundary condition at $\partial \Omega_{+}$ yields
\begin{eqnarray*}
p(x,L)&=& p(0,L)+p_{0}(x,L)-p_{0}(x,L) + \int_{0}^{x} \partial_{x}h^{+}(\xi) d\xi  \\
&=&p_{0}(x,L)+h^{+}(x,L)+\Lambda(h^{-},h^{+},f^{-},J)-h^{+}(0,L).
\end{eqnarray*}
Then the problem  \eqref{Euler2D:eq} with boundary conditions \eqref{boundary:value:type1:C2} has a unique solution if and only if 
$h^{+}(0)= \Lambda(h^{-},h^{+},f^{-},J).$
\end{proof}

\subsection{Boundary value \textit{(G)} for the 2D steady Euler equation}\label{S:3:3}
In this section we will sketch the proof regarding the construction of solutions to the Euler equation \eqref{Euler2D:eq} satisfying boundary conditions \textit{(G)}, which is a slight modification of the proof of Theorem \ref{Th:case2} and Theorem \ref{Th:case2:C}. 

\begin{theorem}\label{Theorem:G:Euler}
 Let $\Omega=\{ (x,y)\in \mathbb{S}^{1}\times (0,L)\}$, with $L>0$ and $\alpha\in (0,1)$.  Suppose that $(v_0,p_0)\in C^{{2,\alpha}}(\Omega)\times C^{{2,\alpha}}(\Omega)$ is a solution of \eqref{Euler2D:eq} with $\bar{v}^{2}_{0}=\displaystyle\inf_{(x,y)\in\bar{\Omega}} \abs{v^2_{0}(x,y)}> 0$ and  $\mbox{curl }v_0=0$. For $\mathcal{C}= \{(0,y), y\in[0,L] \}$ we have that the integral $\int_{\mathcal{C}} (v_{0}\cdot n) \ dS$ is a real  constant that we will denote as $J_0$. There exist $\epsilon>0$ , $M>0$ sufficiently small as well as $K>0$ such that for 
$v_0$ as above with $ \norm{v^1_0}_{C^{2,\alpha}(\Omega)}\leq\epsilon $ and  $h^{-}\in C^{{2,\alpha}}(\partial \Omega_{-})$,  $h^{+}\in C^{{2,\alpha}}(\partial \Omega_{+}), f^{-}\in C^{{2,\alpha}}(\partial \Omega_{-})$ and $J\in \mathbb{R}$  satisfying 
\begin{equation}\label{smallnes:bound:G}
\norm{h^-}_{C^{{2,\alpha}}(\partial \Omega_{-})} + \norm{h^+}_{C^{{2,\alpha}}(\partial \Omega_{+})}+\norm{f^{-}}_{C^{{2,\alpha}}(\partial \Omega_{-})}+ \abs{J-J_0} \leq KM ,
\end{equation} 
there exists a unique $(v,p)\in C^{{2,\alpha}}(\Omega)\times C^{{2,\alpha}}(\Omega)$ to \eqref{Euler2D:eq}  with $\norm{v-v_0}_{C^{2,\alpha}(\Omega)}\leq M$ such that
\begin{equation}
p +\frac{\abs{v}^2}{2}=p_{0}+\frac{\abs{v_{0}}^2}{2}+h^- \ \mbox{on} \ \partial \Omega_{-},  p =p_{0}+h^{+} \mbox{on} \ \partial \Omega_{+}, \ v\cdot n=v_{0}\cdot n+f ^{-} \ \mbox{on} \ \partial \Omega_{-} \mbox{ and } \int_{\mathcal{C}}v\cdot n \ dS= J.
\end{equation}
The constants $M,K,$ as well as $\epsilon,$ depend  only on $\alpha,L, \bar{v}^{2}_{0}$.
\end{theorem}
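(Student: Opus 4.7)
The proof adapts the fixed point scheme of Theorem \ref{Th:case2:C} to the boundary data of case (G). The decisive simplification compared with cases (B) and (C) is that the Bernoulli function $H = p + \tfrac{1}{2}|v|^2$ is prescribed on $\partial\Omega_-$. Combined with the identity $\partial_x H = v_2 \omega$ (which follows from $v \times \omega = \nabla H$) and the constancy of $H_0$ (since $v_0$ is irrotational), this yields an explicit boundary value for the vorticity,
\begin{equation*}
\omega_0 = \frac{\partial_x h^-}{v_0^2 + f^-} \quad \text{on } \partial\Omega_-,
\end{equation*}
depending only on the prescribed data and not on the unknown $V$. This avoids the loss-of-regularity difficulty that motivated the use of H\"older spaces in cases (B) and (C), and makes the estimates for $\omega_0$ immediate.

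The fixed point is sought in the product space $\hat{B}_M$ of \eqref{B:M2:alpha:C}, with an operator $\Gamma(V,f^+)=(W,\tilde f^+)$ defined by (i) computing $\omega_0$ from the formula above, (ii) solving the transport equation $(v_0 + V)\cdot\nabla\omega = 0$ with $\omega|_{\partial\Omega_-}=\omega_0$ via Proposition \ref{TP:prop}, (iii) deriving $\tilde f^+$ from the pressure condition $p = p_0 + h^+$: differentiating in $x$ and inserting $\partial_x p = \partial_x H - \partial_x(\tfrac{1}{2}|v|^2) = v_2\omega - \partial_x(\tfrac{1}{2}|v|^2)$ yields, after the same algebraic rearrangement as in case (C), a first-order quadrature for $\tilde f^+$ structurally identical to \eqref{equation:tilda:f}, with additive constant fixed by the flux compatibility $\int \tilde f^+ = \int f^-$ as in \eqref{constant:f:div}; (iv) solving the div-curl problem \eqref{div:curl:problem:type2} via Proposition \ref{DCP:prop}. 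The self-mapping property $\Gamma(\hat B_M)\subset\hat B_M$ and the contraction in the $C^{1,\alpha}$-topology follow from verbatim repetitions of the estimates in Lemma \ref{Lemma:contraction} and in the proof of Theorem \ref{Th:case2:C}, with slightly cleaner bounds because $\omega_0$ is linear in the data.

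Banach's fixed point theorem then produces a unique $(V, f^+)$, and the pressure $p$ is reconstructed by line integration of $-v\cdot\nabla v$ starting from a value determined at $(0,0)$ by the Bernoulli condition, exactly as in Theorem \ref{Th:case2}; the periodicity identity $\int_0^1 (v\cdot\nabla)v_1\,dx|_{y=0}=0$ needed for single-valuedness on $\mathbb{S}^1\times(0,L)$ is verified directly from the Bernoulli boundary data together with the formula for $\omega_0$. The principal technical obstacle is that the construction of $\tilde f^+$ only enforces $\partial_x p = \partial_x(p_0+h^+)$ on $\partial\Omega_+$, so $p$ and $p_0+h^+$ agree there up to an additive constant. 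Matching this constant is a scalar compatibility analogous to \eqref{compatibility} in Theorem \ref{Th:case2:C2}; here, however, the transport of $H$ along streamlines gives an explicit formula for $p$ at any designated point of $\partial\Omega_+$ in terms of the data on $\partial\Omega_-$, so the resulting scalar equation is amenable to the implicit function theorem and can be solved within the smallness threshold $KM$ of \eqref{smallnes:bound:G}, completing the argument.
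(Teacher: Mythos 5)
Your construction of the fixed point coincides with the paper's in every essential respect: the same boundary vorticity $\omega_0=\partial_x h^-/(v_0^2+f^-)$ from \eqref{omega:o:type3} (which, as you note, is independent of the unknown $V$ because the Bernoulli function of the irrotational background is constant, making the estimate \eqref{estimate:omega:G} immediate), the same transport and div--curl building blocks from Propositions \ref{TP:prop} and \ref{DCP:prop}, the same quadrature for $\tilde f^{+}$ with the additive constant fixed by the flux compatibility \eqref{constant:f:div:G}, and the same contraction argument in the $C^{1,\alpha}$ topology on $\hat B_M$. (Your derivation of the integrand via $\partial_x p$ produces $\partial_x h^{+}$ where the paper's \eqref{equation:tilda:f:G} writes $h^{+}$; yours is the version consistent with \eqref{equation:tilda:f}, and the paper's appears to be a typo.) Up to the existence of the fixed point, the two arguments are identical.

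The divergence is in the concluding step, where you have put your finger on something the paper's sketch simply omits: the paper stops at the fixed point and never reconstructs $p$ nor verifies $p=p_0+h^{+}$ on $\partial\Omega_{+}$, and as you observe the operator only enforces $\partial_x p=\partial_x(p_0+h^{+})$ there. However, your proposed resolution is not a proof. The single additive constant available in $p$ must already be spent turning the relation $\partial_x\bigl(p+\tfrac12|v|^2\bigr)=\partial_x\bigl(p_0+\tfrac12|v_0|^2+h^-\bigr)$ on $\partial\Omega_{-}$ (which is all that the choice of $\omega_0$ guarantees) into the exact equality claimed in the statement; after that, the number $p(0,L)-p_0(0,L)-h^{+}(0)$ is completely determined by the data $(h^-,h^{+},f^-,J)$, and there is no remaining free scalar to which an implicit function theorem could be applied. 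This is exactly the situation of case \textit{(C)}, where the paper is forced to state the result with the compatibility condition \eqref{compatibility2} rather than ``solve'' the mismatch. To close the argument you would need either to prove that this scalar quantity vanishes automatically for the data of case \textit{(G)} --- plausible in view of the Grad--Shafranov version, but note that Theorem \ref{theorem:GS:2} does \emph{not} prescribe the flux $J$, so the parameter counts of the two statements differ by one --- or to release a scalar degree of freedom (e.g.\ the flux) or impose a compatibility condition. Neither your write-up nor the paper's sketch does this, so the step you correctly identified as the principal obstacle remains open.
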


\begin{proof}[Proof of Theorem \ref{Theorem:G:Euler}]
For any,  $M>0$, let us denote by $\hat{B}_{M}$ the closed ball of in $C^{2,\alpha}(\Omega)\times C^{2,\alpha}(\partial\Omega_{+})$ with radius $M$, i.e.,  
\begin{equation}
\hat{B}_{M}=\{ (g_{1},g_{2})\in C^{2,\alpha}(\Omega)\times C^{2,\alpha}(\partial\Omega_{-}): \norm{g}_{C^{2,\alpha}(\Omega)\times C^{2,\alpha}(\partial\Omega_{+})}\leq M \}.
\end{equation}

We define the operator $\Gamma: \hat{B}_{M} \to C^{2,\alpha}(\Omega)\times C^{2,\alpha}(\partial\Omega_{+})$ in two steps. First, given $(V,f^{+})\in \hat{B}_{M}$ we define $\omega\in C^{1,\alpha}(\Omega)$ solving the following the transport type problem 
\begin{equation}\label{transport:problem:type3}
\left\lbrace
\begin{array}{lll}
(v_{0}+V)\cdot\nabla \omega =0, \ \mbox{in} \ \Omega \\
\omega = \omega_{0}, \ \mbox{on} \ \partial \Omega_{-}
\end{array} \right.
\end{equation} 
with  $\omega_0$ given by
\begin{equation}\label{omega:o:type3}
\omega_{0}=\frac{\partial_{x}h^{-}}{v^2_0+f^{-}}, \  \forall x\in\partial\Omega_{-},
\end{equation}
where $v_{0}=(v^1_{0},v^2_{0})$ and $V=(V^1,V^2)$.  As a second step, we define $(W,\tilde{f}^{+})\in C^{2,\alpha}(\Omega)\times C^{2,\alpha}(\partial\Omega_{+})$ as
\begin{equation}\label{equation:tilda:f:G}
\tilde{f}^{+}=\int_{0}^{x} \omega(x',L) +\frac{1}{v^{2}_{0}+f^{+}}\left( h^{+}  -\partial_{x}(v^{1}_{0}V^1)-\frac{1}{2}\partial_{x}\abs{V^1}^2-f^{+}\partial_{y} v^1_{0} \right)  \ dx' +\tilde{f}^{+}(0,L), \ 
\end{equation}
where the constant $\tilde{f}^{+}(0,L)$ is given by
\begin{equation}\label{constant:f:div:G}
\tilde{f}^{+}(0,L)=\int_{0}^{1}f^{-}(x) dx- \int_{0}^{1}\mathcal{T}(x)(1-x) \ dx \\
\end{equation}
with $$ \mathcal{T}(x)= \omega +\frac{1}{v^{2}_{0}+f^{+}}\left( h^{+}  -\partial_{x}(v^{1}_{0}V^1)-\frac{1}{2}\partial_{x}\abs{V^1}^2-f^{+}\partial_{y} v^1_{0} \right).$$  The function $W$ is the unique solution to the following div-curl problem
 \begin{equation}\label{div:curl:problem:type3}
\left\lbrace
\begin{array}{lll}
\nabla\times W= \omega, \ \mbox{in} \ \Omega \\
\mbox{div } W=0, \ \mbox{in} \ \Omega \\ 
 W\cdot n=\tilde{f}^{+}, \ \mbox{on} \ \partial \Omega_{+} \\ 
 W\cdot n=f^{-}, \ \mbox{on} \ \partial \Omega_{-} \\ 
\displaystyle \int_{\mathcal{C}} W\cdot n \  dS=J-J_{0}.
\end{array} \right.
\end{equation} 
Thus we define $\Gamma(V,f^{+})=(W,\tilde{f}^{+})$. We need to show that there exists a sufficiently small $\delta_{0}=\delta_{0}(\Omega,\alpha,L, \bar{v}^2_{0})$ and $M_{0}\in (0, \frac{ \bar{v}^2_{0}}{2})$ such that if $M\in (0, M_{0})$ and 
\begin{equation}\label{smallness:case2:G}
\epsilon \leq \delta_{0}, \norm{h^-}_{C^{{2,\alpha}}(\partial \Omega_{-})} + \norm{h^+}_{C^{{2,\alpha}}(\partial \Omega_{+})}+\norm{f^{-}}_{C^{{2,\alpha}}(\partial \Omega_{-})}\leq M_{0}\delta_{0}  \mbox{ and } \abs{J-J_{0}} \leq M_{0}\delta_{0},
\end{equation}
then $\Gamma( \hat{B}_{M}(\Omega))\subset  \hat{B}_{M}(\Omega)$.  Similarly, as for case \textit{(C)} we have that using inequalities \eqref{estimate:DCP} with $j=J-J_{0}$ and \eqref{estimate:TP} we have that
\begin{equation}
\norm{\Gamma(V,f^+)}_{C^{2,\alpha}(\Omega)} \leq C \left(\norm{\omega_0}_{C^{1,\alpha}(\partial\Omega_{-})}+\norm{f^-}_{C^{2,\alpha}(\partial \Omega_{-})}+\norm{\tilde{f}^{+}}_{C^{2,\alpha}(\partial \Omega_{+})}+\abs{J-J_{0}}\right).
\end{equation}
Furthermore, by the definition of $\omega_{0}$ and $\tilde{f^+}$ in \eqref{omega:o:type3}, \eqref{equation:tilda:f:G}respectively and using the smallness assumption \eqref{smallnes:bound:G} we have that
\begin{equation}\label{estimate:omega:G}
\norm{\omega_{0}}_{C^{1,\alpha}(\Omega)}\leq C\delta_{0}M,
\end{equation}
and 
\begin{equation}\label{estimate:tild:f:G}
\norm{\tilde{f^{+}}}_{C^{2,\alpha}(\partial\Omega_{+})} \leq  C\left(\delta_0M+M^2 \right).
\end{equation}
Hence, using \eqref{estimate:omega:G} and \eqref{estimate:tild:f:G} we have that
$$ \norm{\Gamma(V,f^{+})}_{C^{2,\alpha}(\Omega)} \leq C\left(\delta_0 M+M^2 \right). $$
Choosing $C \delta_0 \leq \frac{1}{4}$ and $CM_0 \leq \frac{1}{4}$, we obtain $\Gamma( \hat{B}_{M}(\Omega))\subset  \hat{B}_{M}(\Omega)$, for $M \in (0,M_{0})$. 

We claim that $\Gamma: (\hat{B}_{M}, \norm{\cdot}_{C^{1,\alpha}(\Omega)\times C^{1,\alpha}(\partial\Omega_{+})}) \to (\hat{B}_{M}, \norm{\cdot}_{C^{1,\alpha}(\Omega)\times C^{1,\alpha}(\partial\Omega_{+})})$ is a contraction mapping. To that purpose, we estimate the difference 
$\norm{\Gamma(V^{1}, f^{1,+})-\Gamma(V^{2},f^{2,+})}_{C^{1,\alpha}}$ with $(V^{1}, f^{1,+}),(V^{2},f^{2,+})\in  \hat{B}_{M}(\Omega)$. Mimicking the estimates \eqref{difference:bound:2}, \eqref{difference:bound:4} and \eqref{difference:bound:6} it follows that  that
$$ \norm{\Gamma(V^{1}, f^{1,+})-\Gamma(V^{2},f^{2,+})}_{C^{1,\alpha}(\Omega)} \leq  \theta(\norm{V^1-V^2}_{C^{1,\alpha}}+\norm{f^{1,+}-f^{2,+}}_{C^{1,\alpha}}),$$
where $\theta$ is strictly less than one for  $C\delta_{0}\leq\frac{1}{4}$ and $CM_{0}\leq \frac{1}{4}$, and hence showing that  $\Gamma: (\hat{B}_{M}, \norm{\cdot}_{C^{1,\alpha}(\Omega)\times C^{1,\alpha}(\partial\Omega_{+})}) \to (\hat{B}_{M}, \norm{\cdot}_{C^{1,\alpha}(\Omega)\times C^{1,\alpha}(\partial\Omega_{+})})$ is a contraction mapping. We conclude the proof by using Banach's fixed point theorem which shows the existence of a unique fixed point  $(V,f^{+})\in \hat{B}_{M}(\Omega)$. 
\end{proof}

\subsection{Loss of regularity for the boundary value problem \textit{(D)} using vorticity transport method}\label{S:3:4}
In this section we provide a justification to the regularity loss of the vorticity transport method to deal with boundary value problem \textit{(D)} (i.e. a boundary problem for $p+\frac{v^2}{2}=h^{-}$ on $\partial \Omega_{-}$, $p+\frac{v^2}{2}=h^{+}$ on $\partial \Omega_{+}$ and $v\cdot n$ on $\partial\Omega_{-}$). 

Suppose that we try to solve this problem using the vorticity transport method used before to deal with the cases \textit{(B),(C)}.
In a similar way as above, we define the operator $\Gamma: \hat{B}_{M} \to C^{2,\alpha}(\Omega)\times C^{2,\alpha}(\partial\Omega_{+})$ in two steps. First, given $(V,f^{+})\in \hat{B}_{M}$ we define $\omega\in C^{1,\alpha}(\Omega)$ solving the following the transport type problem 
\begin{equation}\label{transport:problem:type4}
\left\lbrace
\begin{array}{lll}
(v_{0}+V)\cdot\nabla \omega =0, \ \mbox{in} \ \Omega \\
\omega = \omega_{0}, \ \mbox{on} \ \partial \Omega_{-}
\end{array} \right.
\end{equation} 
with  $\omega_0$ given by
\begin{equation}
\omega_{0}=\frac{\partial_{x}h^{-}}{v^2_0+f^{-}}, \  \forall x\in\partial\Omega_{-},
\end{equation}
where $v_{0}=(v^1_{0},v^2_{0})$ and $V=(V^1,V^2)$.  As a second step we define $(W,\tilde{f}^{+})$ where
$W$ is the unique solution to the following div-curl problem
 \begin{equation}\label{div:curl:problem:type4}
\left\lbrace
\begin{array}{lll}
\nabla\times W= \omega, \ \mbox{at} \ \Omega \\
\mbox{div } W=0, \ \mbox{at} \ \Omega \\ 
 W\cdot n=\tilde{f}^{+}, \ \mbox{at} \ \partial \Omega_{+} \\ 
 W\cdot n=f^{-}, \ \mbox{at} \ \partial \Omega_{-} \\ 
\displaystyle \int_{\mathcal{C}} W\cdot n \  dS=J-J_{0},
\end{array} \right.
\end{equation} 
and the function $\tilde{f}^{+}$ is given by 
\begin{equation}
\tilde{f}^{+}=\frac{\partial_{x}h^{+}}{\omega}, \  \forall x\in\partial\Omega_{+},
\end{equation}
Thus we define $\Gamma(V,f^{+})=(W,\tilde{f}^{+})$. Therefore, since $(V,f^{+})\in C^{2,\alpha}(\Omega)\times C^{2,\alpha}(\partial\Omega_{+})$ and $\omega_{0}\in C^{1,\alpha}(\Omega_{-})$ we infer that $\omega\in C^{1,\alpha}(\Omega)$. Furthermore, the function $\tilde{f}^{+}$ has regularity $C^{1,\alpha}(\Omega_{+})$ and and the function $W$ can only to be expected to have at most regulartity $C^{1,\alpha}$. Therefore it would not be possible to close the fixed point argument.

\section{The MHS boundary value theorems}\label{S:4}
For the sake of completeness, we will present in this section the statements of the theorems we have shown for the boundary value problems for the Euler equation \eqref{Euler2D:eq:omega} in the case of the MHS equations \eqref{MHS2D:current}. Since from the PDE point the Euler and MHS equation are equivalent problems using the identification variables \eqref{transformation:variable} the proof is exactly the same as in the Euler case. 

\begin{theorem}[Case \textit{A}]\label{theorem:A:MHS}
Let $f^{-}\in C^{1,\alpha}(\partial\Omega_{-}), f^{+}\in C^{1,\alpha}(\partial\Omega_{+})$ and $h^{-}\in C^{1,\alpha}(\partial\Omega_{+})$. Then if $f^{-}>0$ for $x\in \partial \Omega_{-}$, there exists a solution $(B,p)\in C^{{1,\alpha}}(\Omega)\times C^{{1,\alpha}}(\Omega)$ solving the MHS equation \eqref{MHS2D:current} such that 
\begin{equation}
B\cdot n =f^{-}\mbox{on} \ \partial \Omega^{-}, B\cdot n =f^{+} \ \mbox{on} \ \partial \Omega_{+} \mbox{ and }  p=h^{-}  \ \mbox{on} \ \partial \Omega_{-}.
\end{equation}
Moreover, there exists $\delta>0$ such that if 
\begin{equation}
\norm{f^-}_{C^{{1,\alpha}}(\partial \Omega_{-})}+\norm{f^+}_{C^{{1,\alpha}}(\partial \Omega_{+})} +\norm{h^{-}}_{C^{{1,\alpha}}(\partial \Omega_{-})}\leq  \delta ,
\end{equation}
the solution $(B,p)$ is unique. 
\end{theorem}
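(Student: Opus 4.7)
The plan is to derive Theorem \ref{theorem:A:MHS} directly from its Euler counterpart, Theorem \ref{theorem:GS:4}, by exploiting the PDE-level equivalence \eqref{transformation:variable}. Concretely, given MHS data $(f^{-},f^{+},h^{-})$ satisfying the hypotheses, I would first relabel the Bernoulli datum by setting $\tilde h^{-}:=-h^{-}\in C^{1,\alpha}(\partial\Omega_{-})$, which preserves the norm and so the smallness condition. Applying Theorem \ref{theorem:GS:4} to the Euler problem with data $(f^{-},f^{+},\tilde h^{-})$ produces $(v,p_E)\in C^{1,\alpha}(\Omega)\times C^{1,\alpha}(\Omega)$ solving \eqref{Euler2D:eq} with $v\cdot n=f^{\pm}$ on $\partial\Omega_{\pm}$ and $p_E+\tfrac12|v|^2=-h^{-}$ on $\partial\Omega_{-}$.

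Next I would define
\begin{equation*}
B:=v, \qquad p:=-\left(p_E+\tfrac12|v|^2\right),
\end{equation*}
and check that $(B,p)$ solves \eqref{MHS2D:current}. Incompressibility $\nabla\cdot B=0$ is immediate, and setting $j:=\nabla\times B=\omega$ one has $B\times j=v\times\omega=\nabla H$, where $H=p_E+\tfrac12|v|^2$ is the Euler Bernoulli function and the last identity is the reformulation \eqref{Euler2D:eq:omega}. Since $H=-p$, this reads $B\times j=-\nabla p$, which is the first equation of \eqref{MHS2D:current}. Regularity $p\in C^{1,\alpha}(\Omega)$ follows from $v,p_E\in C^{1,\alpha}(\Omega)$, and the boundary conditions $B\cdot n=f^{\pm}$ on $\partial\Omega_{\pm}$ and $p=h^{-}$ on $\partial\Omega_{-}$ are immediate from the construction.

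For the uniqueness statement, under the smallness assumption on $f^{-},f^{+},h^{-}$, any two MHS solutions $(B_1,p_1),(B_2,p_2)$ would produce, via the inverse correspondence $v_i:=B_i$, $p_{E,i}:=-p_i-\tfrac12|B_i|^2$, two Euler solutions sharing the same boundary data $(f^{-},f^{+},\tilde h^{-})$ with $\norm{f^{-}}_{C^{1,\alpha}}+\norm{f^{+}}_{C^{1,\alpha}}+\norm{\tilde h^{-}}_{C^{1,\alpha}}\leq\delta$. The uniqueness clause of Theorem \ref{theorem:GS:4} forces $v_1=v_2$ and $p_{E,1}=p_{E,2}$, whence $B_1=B_2$ and $p_1=p_2$.

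There is no substantive obstacle: the only point to watch is the sign convention $p\leftrightarrow -H$ in \eqref{transformation:variable}, so that the MHS boundary datum $p=h^{-}$ translates to the Euler datum $H=-h^{-}$ rather than $H=h^{-}$. Since this sign is harmless for the smallness condition and for the existence/uniqueness statements of Theorem \ref{theorem:GS:4}, the result of Theorem \ref{theorem:A:MHS} follows by direct translation without any new analytic input.
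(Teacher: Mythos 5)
Your proposal is correct and is essentially the paper's own argument: Section \ref{S:4} states that the MHS theorems follow from their Euler counterparts via the identification \eqref{transformation:variable}, which is exactly the translation you carry out (including the sign flip $\tilde h^{-}=-h^{-}$ coming from $p\leftrightarrow -H$). You simply make explicit the details the paper leaves implicit, so no further comment is needed.
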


\begin{theorem}[Case \textit{B}]\label{theorem:B:MHS} 
Let $\Omega=\{ (x,y)\in \mathbb{S}^{1}\times (0,L)\}$, with $L>0$ and $\alpha\in (0,1)$.  Suppose that $(B_0,p_0)\in C^{{2,\alpha}}(\Omega)\times C^{{2,\alpha}}(\Omega)$ is a solution of \eqref{MHS2D:current} with $\bar{B}^{2}_{0}=\displaystyle\inf_{(x,y)\in\bar{\Omega}} \abs{B^2_{0}(x,y)}> 0$  and  $\mbox{curl }B_0=0$. For $\mathcal{C}= \{(0,y), y\in[0,L] \}$ we have that the integral $\int_{\mathcal{C}} (B_{0}\cdot n) \ dS$ is a real  constant that we will denote as $J_0$. There exist $\epsilon>0$ , $M>0$ sufficiently small as well as $K>0$ such that for 
$B_0$ as above with $ \norm{B^1_0}_{C^{2,\alpha}(\Omega)}\leq\epsilon $ and  $h\in C^{{2,\alpha}}(\partial \Omega_{-})$, $f\in C^{{2,\alpha}}(\partial \Omega)$ and $J\in \mathbb{R}$  satisfying 
\begin{equation}
\norm{h}_{C^{{2,\alpha}}(\partial \Omega_{-})}+\norm{f}_{C^{{2,\alpha}}(\partial \Omega)}+ \abs{J-J_0} \leq KM ,
\end{equation} 
and 
\begin{equation}
\int_{\partial\Omega_{-}}f \ dS=  \int_{\partial\Omega_{+}}f \ dS,
\end{equation}
there exists a unique $(B,p)\in C^{{2,\alpha}}(\Omega)\times C^{{2,\alpha}}(\Omega)$ to \eqref{MHS2D:current}  with $\norm{B-B_0}_{C^{2,\alpha}(\Omega)}\leq M$ such that
\begin{equation}
B\cdot n =B_{0}\cdot n+f \ \mbox{on} \ \partial \Omega, \ p+\frac{\abs{B}^2}{2}=p_0+\frac{\abs{B_0}^2}{2}+h  \ \mbox{on} \ \partial \Omega_{-} \mbox{ and } \int_{\mathcal{C}}B\cdot n \ dS= J.
\end{equation}
The constants $M,K,$ as well as $\epsilon,$ depend  only on $\alpha,L, \bar{B}^{2}_{0}$.
\end{theorem}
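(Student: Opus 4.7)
The plan is to reduce the statement to Theorem \ref{Th:case2} via the identification of variables \eqref{transformation:variable}, and then invoke that theorem directly; no new analytic work is required.

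First I would verify that the PDE problem and the boundary conditions match. Under the substitution $B \leftrightarrow v$, $j \leftrightarrow \omega$, $p_{\mathrm{MHS}} \leftrightarrow -H_{\mathrm{Euler}} = -(p_{\mathrm{Euler}} + \tfrac12\abs{v}^2)$, the three MHS equations in \eqref{MHS2D:current} become the vorticity formulation \eqref{Euler2D:eq:omega} of the Euler equations, and the hypotheses on the base flow $(B_0,p_0)$ (namely, $B_0 \in C^{2,\alpha}$, $\bar{B}^2_0 > 0$, $\operatorname{curl} B_0 = 0$, and $\|B_0^1\|_{C^{2,\alpha}}\le \epsilon$) translate verbatim into the hypotheses on $(v_0,p_0^{E})$ required by Theorem \ref{Th:case2}. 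On the boundary the condition $B\cdot n = B_0\cdot n + f$ becomes $v\cdot n = v_0\cdot n + f$ unchanged, and the Bernoulli-type condition transforms as
\begin{equation*}
p_{\mathrm{MHS}} + \tfrac12\abs{B}^2 \;\longleftrightarrow\; -\bigl(p_{\mathrm{Euler}} + \tfrac12\abs{v}^2\bigr) + \tfrac12\abs{v}^2 \;=\; -p_{\mathrm{Euler}},
\end{equation*}
so the MHS boundary condition $p + \tfrac12\abs{B}^2 = p_0 + \tfrac12\abs{B_0}^2 + h$ on $\partial\Omega_{-}$ is equivalent to $p_{\mathrm{Euler}} = p_0^E + \tilde h$ on $\partial\Omega_{-}$ with $\tilde h := -h$. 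The flux prescription on $\mathcal{C}$ and the compatibility condition $\int_{\partial\Omega_-}f = \int_{\partial\Omega_+}f$ are linear in $B$ (resp.\ $v$) and so are preserved verbatim.

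Next I would apply Theorem \ref{Th:case2} to the transformed data $(v_0, p_0^E, \tilde h, f, J)$. Since $\|\tilde h\|_{C^{2,\alpha}(\partial\Omega_-)} = \|h\|_{C^{2,\alpha}(\partial\Omega_-)}$, the smallness assumption \eqref{smallnes:bound} is satisfied with the same constants $K, M, \epsilon$ (depending only on $\alpha, L, \bar{B}^2_0$). Theorem \ref{Th:case2} then yields a unique $(v,p_{\mathrm{Euler}}) \in C^{2,\alpha}(\Omega)\times C^{2,\alpha}(\Omega)$ with $\|v-v_0\|_{C^{2,\alpha}}\le M$ solving \eqref{Euler2D:eq:omega} with the prescribed boundary data. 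Setting $B := v$ and $p_{\mathrm{MHS}} := -p_{\mathrm{Euler}} - \tfrac12\abs{v}^2$, one obtains a pair in $C^{2,\alpha}(\Omega)\times C^{2,\alpha}(\Omega)$, and the Euler identities for $(v,p_{\mathrm{Euler}})$ translate, line by line, into the MHS identities for $(B,p_{\mathrm{MHS}})$. The bound $\|B-B_0\|_{C^{2,\alpha}}\le M$ is immediate.

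The only mildly delicate point, which I would spell out, is uniqueness: any other solution $(B',p')$ of \eqref{MHS2D:current} with $\|B'-B_0\|_{C^{2,\alpha}}\le M$ and the same boundary data produces, via the inverse transformation $v' := B'$ and $p'_{\mathrm{Euler}} := -p' - \tfrac12\abs{B'}^2$, an Euler solution in the uniqueness class of Theorem \ref{Th:case2}; hence $v'=v$ and $p'_{\mathrm{Euler}} = p_{\mathrm{Euler}}$, and consequently $(B',p') = (B,p_{\mathrm{MHS}})$. There is no genuine analytic obstacle here: the whole argument is a transcription through the Alfvén-type duality \eqref{transformation:variable}, and the constants $M, K, \epsilon$ depend on exactly the same geometric and regularity data as in the Euler case, with $\bar{v}^2_0$ replaced by $\bar{B}^2_0$.
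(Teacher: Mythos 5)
Your proposal is correct and matches the paper's own argument: Section \ref{S:4} explicitly derives Theorem \ref{theorem:B:MHS} from Theorem \ref{Th:case2} via the change of variables \eqref{transformation:variable}, exactly as you do (including the observation that the MHS condition on $p+\tfrac12\abs{B}^2$ corresponds to the Euler condition on $p$ alone, with $\tilde h=-h$ leaving the smallness hypothesis unchanged). Nothing further is needed.
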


\begin{theorem}[Case \textit{C}. Solvability]\label{theorem:C:MHS} 
Let $\Omega=\{ (x,y)\in \mathbb{S}^{1}\times (0,L)\}$, with $L>0$ and $\alpha\in (0,1)$.  Suppose that $(B_0,p_0)\in C^{{2,\alpha}}(\Omega)\times C^{{2,\alpha}}(\Omega)$ is a solution of \eqref{MHS2D:current} with $\bar{B}^{2}_{0}=\displaystyle\inf_{(x,y)\in\bar{\Omega}} \abs{B^2_{0}(x,y)}> 0$ and $\mbox{curl }B_0=0$.. For $\mathcal{C}= \{(0,y), y\in[0,L] \}$ we have that the integral $\int_{\mathcal{C}} (B_{0}\cdot n) \ dS$ is a real  constant that we will denote as $J_0$. There exist $\epsilon>0$ , $M>0$ sufficiently small as well as $K>0$ such that for 
$B_0$ as above with $ \norm{B^1_0}_{C^{2,\alpha}(\Omega)}\leq\epsilon $ and $h^{-}\in C^{{2,\alpha}}(\partial \Omega_{-})$, $h^{+}\in C^{{2,\alpha}}(\partial \Omega_{+})$, $f^{-}\in C^{{2,\alpha}}(\partial \Omega_{-})$ and $J\in \mathbb{R}$  satisfying 
\begin{equation}
\norm{h^{-}}_{C^{{2,\alpha}}(\partial \Omega_{-})}+\norm{h^{+}}_{C^{{2,\alpha}}(\partial \Omega_{+})}+\norm{f^{-}}_{C^{{2,\alpha}}(\partial \Omega_{-})}+ \abs{J-J_0} \leq KM ,
\end{equation} 
there exists a unique $(B,p)\in C^{{2,\alpha}}(\Omega)\times C^{{2,\alpha}}(\Omega)$ to \eqref{MHS2D:current}  with $\norm{B-B_0}_{C^{2,\alpha}(\Omega)}\leq M$ such that
\begin{align*}
p+\frac{\abs{B}^2}{2}&=p_0+\frac{\abs{B_0}^2}{2}+h^{-}  \ \mbox{on} \ \partial \Omega_{-}, \partial_{x}(p+\frac{\abs{B}^2}{2})=\partial_{x}(p_0+\frac{\abs{B_0}^2}{2})+\partial_{x}h^{+}  \ \mbox{on} \ \partial \Omega_{+}\\
 B\cdot n &=B_{0}\cdot n+f^{-}\ \mbox{on} \ \partial \Omega_{-}, \ \mbox{ and } \int_{\mathcal{C}}B\cdot n \ dS= J.
\end{align*}
The constants $M,K,$ as well as $\epsilon,$ depend  only on $\alpha,L, \bar{B}^{2}_{0}$.
\end{theorem}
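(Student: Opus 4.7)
The plan is to reduce Theorem \ref{theorem:C:MHS} directly to Theorem \ref{Th:case2:C} by invoking the transformation of variables \eqref{transformation:variable}. Recall that $B \leftrightarrow v$, $j \leftrightarrow \omega$, and $p_{\text{MHS}} \leftrightarrow -H_{\text{Euler}} = -(p_{\text{Euler}} + \tfrac{1}{2}|v|^2)$, which equivalently gives $p_{\text{Euler}} \leftrightarrow -(p_{\text{MHS}} + \tfrac{1}{2}|B|^2)$. Under this dictionary, the system \eqref{MHS2D:current} becomes \eqref{Euler2D:eq:omega}, and every hypothesis of Theorem \ref{theorem:C:MHS} turns into the corresponding hypothesis of Theorem \ref{Th:case2:C}: the smallness \eqref{smallnes:bound:C} and the hypotheses on $B_0$ (curl-free, $\inf |B_0^2|>0$, smallness of $B_0^1$) match one-to-one.

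The first step I would carry out is to check that the boundary conditions translate correctly. In the Euler problem we prescribed $p = p_0 + h^-$ on $\partial\Omega_-$, $\partial_x p = \partial_x p_0 + \partial_x h^+$ on $\partial\Omega_+$, $v\cdot n = v_0\cdot n + f^-$ on $\partial\Omega_-$, and $\int_{\mathcal{C}} v\cdot n\, dS = J$. Applying the identification $v \mapsto B$ and $p_{\text{Euler}} \mapsto -(p_{\text{MHS}} + \tfrac{1}{2}|B|^2)$ (and adding a constant if needed so that the additive base quantity $p_0 + \tfrac{1}{2}|B_0|^2$ appears correctly) yields exactly the four conditions in the statement of Theorem \ref{theorem:C:MHS}, namely the pressure-plus-magnetic-head condition on $\partial\Omega_-$, its $x$-derivative condition on $\partial\Omega_+$, the normal component of $B$ on $\partial\Omega_-$, and the flux condition on $\mathcal{C}$.

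The second step is to translate the fixed point scheme of Section \ref{S:32} into the MHS language: set $B = B_0 + V$ with $V \in \hat{B}_M$ as in \eqref{B:M2:alpha:C}, define the current $j$ on $\partial\Omega_-$ by the analogue of \eqref{omega:o:type2},
\begin{equation*}
j_0 = \partial_x f^- + \frac{1}{f^- + B_0^2}\left(-\partial_x h^- + \partial_x(B_0^1 V^1) + \tfrac12 \partial_x |V^1|^2 + f^- \partial_y B_0^1\right),
\end{equation*}
propagate $j$ by the transport equation $(B_0+V)\cdot \nabla j = 0$, and then recover the pair $(W,\tilde f^+)$ via the div-curl problem \eqref{div:curl:problem:type2} together with the expression \eqref{equation:tilda:f} for $\tilde f^+$. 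Propositions \ref{TP:prop} and \ref{DCP:prop} are purely PDE statements about the transport and div-curl operators and so apply verbatim after renaming the unknowns.

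The final step is to copy word-for-word the estimates showing that the induced operator $\Gamma:\hat{B}_M\to \hat{B}_M$ is well defined and is a contraction in the weaker $C^{1,\alpha}(\Omega)\times C^{1,\alpha}(\partial\Omega_+)$ topology, and to reconstruct the MHS pressure $p$ from the fixed point $(V,f^+)$ by integrating $-B\cdot \nabla B = \nabla(p + \tfrac12|B|^2)$ along paths, exactly as in the Euler proof; the uni-valuedness of the resulting potential is guaranteed by the boundary condition on $\partial\Omega_-$ by the same computation as in Section \ref{S:2:3}. In short, there is no substantial obstacle here: the entire argument is a symbol-by-symbol translation of Theorem \ref{Th:case2:C}, and the only care required is the sign bookkeeping in the correspondence $p_{\text{Euler}} \leftrightarrow -(p_{\text{MHS}} + \tfrac12 |B|^2)$, which merely dictates the precise form of $j_0$ and of the reconstructed pressure.
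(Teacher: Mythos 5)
Your proposal is correct and is essentially the paper's own argument: Section \ref{S:4} proves Theorem \ref{theorem:C:MHS} precisely by invoking the equivalence of \eqref{MHS2D:current} and \eqref{Euler2D:eq:omega} under the identification \eqref{transformation:variable} and transporting Theorem \ref{Th:case2:C} verbatim. Your additional care with the sign in $p_{\mathrm{Euler}} \leftrightarrow -(p_{\mathrm{MHS}}+\tfrac12|B|^2)$ (absorbed into a relabeling of $h^{\pm}$) is exactly the only bookkeeping the reduction requires.
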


\begin{theorem}[Case \textit{C}. Compatibility condition]\label{Th:case2:C2:MHS}
 Let $\Omega=\{ (x,y)\in \mathbb{S}^{1}\times (0,L)\}$, with $L>0$ and $\alpha\in (0,1)$.  Suppose that $(B_0,p_0)\in C^{{2,\alpha}}(\Omega)\times C^{{2,\alpha}}(\Omega)$ is a solution of \eqref{MHS2D:current} with $\bar{B}^{2}_{0}=\displaystyle\inf_{(x,y)\in\bar{\Omega}} \abs{B^2_{0}(x,y)}> 0$ and $\mbox{curl }B_0=0$. For $\mathcal{C}= \{(0,y), y\in[0,L] \}$ we have that the integral $\int_{\mathcal{C}} (B_{0}\cdot n) \ dS$ is a real  constant that we will denote as $J_0$. There exist $\epsilon>0$ , $M>0$ sufficiently small as well as $K>0$ such that for 
$B_0$ as above with $ \norm{B^1_0}_{C^{2,\alpha}(\Omega)}\leq\epsilon $ and  $h^{-}\in C^{{2,\alpha}}(\partial \Omega_{-})$,  $h^{+}\in C^{{2,\alpha}}(\partial \Omega_{+}), f^{-}\in C^{{2,\alpha}}(\partial \Omega_{-})$ and $J\in \mathbb{R}$  satisfying 
\begin{equation}
\norm{h^-}_{C^{{2,\alpha}}(\partial \Omega_{-})} + \norm{h^+}_{C^{{2,\alpha}}(\partial \Omega_{+})}+\norm{f^{-}}_{C^{{2,\alpha}}(\partial \Omega_{-})}+ \abs{J-J_0} \leq KM ,
\end{equation} 
there exists a unique $(B,p)\in C^{{2,\alpha}}(\Omega)\times C^{{2,\alpha}}(\Omega)$ to \eqref{MHS2D:current}  with $\norm{B-B_0}_{C^{2,\alpha}(\Omega)}\leq M$ such that
\begin{equation}
p =p_{0}+h^- \ \mbox{on} \ \partial \Omega_{-},  p =p_{0}+h^{+} \mbox{on} \ \partial \Omega_{+}, \ B\cdot n=B_{0}\cdot n+f ^{-} \ \mbox{on} \ \partial \Omega_{-} \mbox{ and } \int_{\mathcal{C}}B\cdot n \ dS= J
\end{equation}
if and only if 
\begin{equation}
h{+}(0)= \Lambda(h^{-},h^{+},f^{-},J).
\end{equation}
\end{theorem}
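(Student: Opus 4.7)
The plan is to follow verbatim the proof of Theorem \ref{Th:case2:C2}, using the fact that Theorem \ref{theorem:C:MHS} plays for the MHS setting exactly the same role that Theorem \ref{Th:case2:C} played for the Euler setting. Equivalently, one may note that via the change of variables \eqref{transformation:variable} the two problems are PDE-equivalent and the Euler statement transfers directly. The compatibility condition will emerge from a one-line integration of the relaxed tangential-derivative boundary condition along $\partial\Omega_+$, exactly as in the Euler case.

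Concretely, I would first invoke Theorem \ref{theorem:C:MHS} under the smallness assumption to produce the unique solution $(B,p)\in C^{2,\alpha}(\Omega)\times C^{2,\alpha}(\Omega)$ with $\norm{B-B_0}_{C^{2,\alpha}(\Omega)}\le M$ satisfying its relaxed set of boundary conditions: the full prescribed value on $\partial\Omega_-$, the inflow normal $B\cdot n = B_0\cdot n + f^-$, the flux $\int_{\mathcal{C}} B\cdot n\,dS = J$, and only the tangential-derivative condition on $\partial\Omega_+$. The operator $\Lambda(h^-,h^+,f^-,J) := p(0,L) - p_0(0,L)$ is then well defined thanks to the uniqueness clause of that theorem, and (as noted in the Euler discussion preceding Theorem \ref{Th:case2:C2}) it is insensitive to adding a constant to $h^+$, since constants drop out of the relaxed derivative condition.

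Next, I would integrate the tangential-derivative boundary condition on $\partial\Omega_+$ from $0$ to $x$ and use the definition of $\Lambda$ to obtain
\begin{equation*}
p(x,L) \;=\; p_0(x,L) + h^+(x) + \Lambda(h^-,h^+,f^-,J) - h^+(0),
\end{equation*}
so that the stronger prescription $p = p_0 + h^+$ on $\partial\Omega_+$ is equivalent to the vanishing of the additive residual, i.e.\ $h^+(0) = \Lambda(h^-,h^+,f^-,J)$. The converse direction is immediate from uniqueness: any solution of \eqref{MHS2D:current} satisfying the stronger boundary conditions in particular satisfies those of Theorem \ref{theorem:C:MHS}, hence must coincide with the solution just constructed and therefore realizes the same compatibility. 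I do not anticipate any genuine difficulty here: the nonlinear fixed-point work is already encapsulated in Theorem \ref{theorem:C:MHS}, and what remains is essentially a routine integration plus the tracking of one additive constant; the only mild subtlety is the bookkeeping of the variable change \eqref{transformation:variable}, which rewrites Bernoulli-type data on the Euler side as pressure-type data on the MHS side but does not affect the structure of the integration step.
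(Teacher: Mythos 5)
Your proposal is correct and coincides with the paper's own treatment: Section \ref{S:4} proves all the MHS statements by noting that, under the change of variables \eqref{transformation:variable}, they are PDE-equivalent to the Euler ones, and your argument is a verbatim transcription of the proof of Theorem \ref{Th:case2:C2} (invoke the relaxed solvability result, here Theorem \ref{theorem:C:MHS}, define $\Lambda$ via the uniqueness clause, and integrate the tangential-derivative condition along $\partial\Omega_{+}$ to isolate the additive constant). The only point worth flagging is that the quantity whose tangential derivative is controlled in Theorem \ref{theorem:C:MHS} is $p+\frac{\abs{B}^2}{2}$ rather than $p$, so the compatibility condition your integration actually produces concerns that Bernoulli-type quantity; this is consistent with row \textit{(C)} of Table \ref{tab:table1} and indicates a typo in the statement of Theorem \ref{Th:case2:C2:MHS} rather than a gap in your argument.
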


\begin{theorem}[Case \textit{D}]\label{theorem:D:MHS}
Let $f\in C^{1,\alpha}(\partial\Omega_{-}), h^{-}\in C^{1,\alpha}(\partial\Omega_{-})$, $h^{+}\in C^{1,\alpha}(\partial\Omega_{+})$ and $h^{+}=h^{-}\circ T$ where $T:\mathbb{S}^1\to \mathbb{S}^1$ is a given diffeomorphism with $C^{2,\alpha}$ regularity.  Then if $f>0$ for $x\in \partial \Omega_{-}$, there exists a solution $(B,p)\in C^{{1,\alpha}}(\Omega)\times C^{{1,\alpha}}(\Omega)$ solving the MHS equation \eqref{MHS2D:current} such that 
\begin{equation}
B\cdot n =f \mbox{on} \ \partial \Omega^{-}, \ p=h^{-}  \ \mbox{on} \ \partial \Omega_{-} \mbox{ and } p=h^{+}  \ \mbox{on} \ \partial \Omega_{+}.
\end{equation}
Moreover, there exists $\delta>0$ such that if 
\begin{equation}
\norm{h^{-}}_{C^{{1,\alpha}}(\partial \Omega_{-})}+\norm{f}_{C^{{1,\alpha}}(\partial \Omega)}\leq  \delta ,
\end{equation}
the solution $(B,p)$ is unique. 
\end{theorem}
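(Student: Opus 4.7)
The plan is to deduce Theorem~\ref{theorem:D:MHS} directly from its Euler counterpart, Theorem~\ref{theorem:GS:1}, by means of the variable transformation~\eqref{transformation:variable}. Given MHS data $(f,h^{-},h^{+},T)$ satisfying the hypotheses, I would first set $\tilde h^{\pm} := -h^{\pm}$; since the transformation exchanges the pressure with minus the Bernoulli function, these $\tilde h^{\pm}$ are the correct boundary data for a companion Euler problem. The relation $h^{+} = h^{-}\circ T$ is trivially preserved by the sign change, the $C^{1,\alpha}$ regularity is untouched, $f$ remains positive on $\partial\Omega_{-}$, and the smallness assumption on $\norm{h^{-}}_{C^{1,\alpha}(\partial\Omega_{-})} + \norm{f}_{C^{1,\alpha}(\partial\Omega)}$ becomes the corresponding Euler smallness condition (with $h^{-}$ replaced by $\tilde h^{-}$, of identical norm).

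Applying Theorem~\ref{theorem:GS:1} to this transformed data produces a solution $(v,\tilde p)\in C^{1,\alpha}(\Omega)\times C^{1,\alpha}(\Omega)$ of the Euler equation~\eqref{Euler2D:eq} with $v\cdot n = f$ on $\partial\Omega_{-}$ and $\tilde p + \tfrac12|v|^2 = \tilde h^{\pm}$ on $\partial\Omega_{\pm}$. I would then define $B := v$, $j := \nabla\times v$, and $p := -\tilde p - \tfrac12|v|^2$, which is the substitution $p \leftrightarrow -H$ of~\eqref{transformation:variable} at the level of fields. Rewriting Euler in vorticity form~\eqref{Euler2D:eq:omega}, namely $v\times\omega = \nabla H$ with $H = \tilde p + \tfrac12|v|^2$, the substitutions turn the system into $B\times j = -\nabla p$, $\nabla\times B = j$, $\nabla\cdot B = 0$, so that $(B,p)$ solves the MHS system~\eqref{MHS2D:current}. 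The boundary traces become $B\cdot n = v\cdot n = f$ on $\partial\Omega_{-}$ and $p = -\tilde h^{\pm} = h^{\pm}$ on $\partial\Omega_{\pm}$. The $C^{1,\alpha}$ regularity of $p$ is inherited from that of $\tilde p$ and $v$ through the algebraic formula.

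For uniqueness, I would observe that the map $(v,\tilde p)\mapsto(B,p)$ defined by $B = v$, $\tilde p = -p - \tfrac12|B|^2$, is an algebraic bijection, so any two MHS solutions in the stated class yield two Euler solutions in the uniqueness class of Theorem~\ref{theorem:GS:1} with the same (transformed) boundary data; the Euler uniqueness then transfers back. In summary, the entire content of Theorem~\ref{theorem:D:MHS} is extracted by a sign-flipping reinterpretation of Theorem~\ref{theorem:GS:1}. There is essentially no analytical obstacle here; the only substantive thing to check is the bookkeeping of boundary data and the preservation of the diffeomorphism relation $h^{+} = h^{-}\circ T$, both of which are immediate. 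This is exactly the content of the general remark opening Section~\ref{S:4}, that the MHS proofs ``are exactly the same as in the Euler case.''
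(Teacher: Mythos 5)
Your proposal is correct and coincides with the paper's own argument: Section~\ref{S:4} explicitly derives the MHS statements from their Euler counterparts via the identification~\eqref{transformation:variable}, and your reduction (solve Theorem~\ref{theorem:GS:1} with the sign-flipped data $\tilde h^{\pm}=-h^{\pm}$, then set $B=v$ and $p=-\tilde p-\tfrac12\abs{v}^2$) is exactly that transfer, with the bookkeeping of the relation $h^{+}=h^{-}\circ T$, the regularity, and the uniqueness all handled correctly.
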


\begin{theorem}[Case \textit{G}]\label{theorem:G:MHS}
 Let $\Omega=\{ (x,y)\in \mathbb{S}^{1}\times (0,L)\}$, with $L>0$ and $\alpha\in (0,1)$.  Suppose that $(B_0,p_0)\in C^{{2,\alpha}}(\Omega)\times C^{{2,\alpha}}(\Omega)$ is a solution of \eqref{MHS2D:current} with $\bar{B}^{2}_{0}=\displaystyle\inf_{(x,y)\in\bar{\Omega}} \abs{B^2_{0}(x,y)}> 0$ and $\mbox{curl }B_0=0$. For $\mathcal{C}= \{(0,y), y\in[0,L] \}$ we have that the integral $\int_{\mathcal{C}} (B_{0}\cdot n) \ dS$ is a real  constant that we will denote as $J_0$. There exist $\epsilon>0$ , $M>0$ sufficiently small as well as $K>0$ such that for 
$B_0$ as above with $ \norm{B^1_0}_{C^{2,\alpha}(\Omega)}\leq\epsilon $ and $h^{-}\in C^{{2,\alpha}}(\partial \Omega_{-})$,  $h^{+}\in C^{{2,\alpha}}(\partial \Omega_{+}), f^{-}\in C^{{2,\alpha}}(\partial \Omega_{-})$ and $J\in \mathbb{R}$  satisfying 
\begin{equation}
\norm{h^-}_{C^{{2,\alpha}}(\partial \Omega_{-})} + \norm{h^+}_{C^{{2,\alpha}}(\partial \Omega_{+})}+\norm{f^{-}}_{C^{{2,\alpha}}(\partial \Omega_{-})}+ \abs{J-J_0} \leq KM ,
\end{equation} 
there exists a unique $(B,p)\in C^{{2,\alpha}}(\Omega)\times C^{{2,\alpha}}(\Omega)$ to \eqref{MHS2D:current}  with $\norm{B-B_0}_{C^{2,\alpha}(\Omega)}\leq M$ such that
\begin{equation}
p =p_{0}+h^- \ \mbox{on} \ \partial \Omega_{-},  p+\frac{\abs{B}^2}{2}=p_{0}+\frac{\abs{B_0}^2}{2}+h^{+} \mbox{on} \ \partial \Omega_{+}, \ B\cdot n=B_{0}\cdot n+f ^{-} \ \mbox{on} \ \partial \Omega_{-} \mbox{ and } \int_{\mathcal{C}}B\cdot n \ dS= J.
\end{equation}
The constants $M,K,$ as well as $\epsilon,$ depend  only on $\alpha,L, \bar{B}^{2}_{0}$.
\end{theorem}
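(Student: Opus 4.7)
The plan is to reduce the statement to its Euler counterpart, Theorem \ref{Theorem:G:Euler}, by exploiting the PDE equivalence between \eqref{Euler2D:eq:omega} and \eqref{MHS2D:current} encoded in \eqref{transformation:variable}. Writing $H_E := p_E + |v|^2/2$ for the Euler Bernoulli function and using that MHS reads $B\times j = -\nabla p_M$, the identification $B\leftrightarrow v$, $j\leftrightarrow \omega$ forces $-\nabla p_M = \nabla H_E$, i.e.\ $p_M = -H_E + c$ for a real constant $c$. Consequently, a classical MHS solution $(B,p_M) \in C^{2,\alpha}(\Omega)\times C^{2,\alpha}(\Omega)$ is equivalent to a classical Euler solution $(v,p_E) \in C^{2,\alpha}(\Omega)\times C^{2,\alpha}(\Omega)$ with $v = B$ and $p_E = -p_M - |B|^2/2 + c$.

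Under this dictionary I would translate the hypotheses of Theorem \ref{theorem:G:MHS} into hypotheses of Theorem \ref{Theorem:G:Euler}. The base flow $(B_0, p_0)$ with $\mbox{curl } B_0 = 0$ becomes the irrotational Euler base $(v_0, p_{E,0})$ with $v_0 := B_0$ and $p_{E,0} := -p_0 - |B_0|^2/2 + c_0$, preserving all quantitative hypotheses: $\bar v_0^2 = \bar B_0^2$, $\norm{v_0^1}_{C^{2,\alpha}(\Omega)} = \norm{B_0^1}_{C^{2,\alpha}(\Omega)}$, and the flux $J_0$ through $\mathcal{C}$ is the same in both settings. A direct computation then shows that the MHS boundary condition $p = p_0 + h^-$ on $\partial\Omega_-$ becomes $p_E + |v|^2/2 = p_{E,0} + |v_0|^2/2 - h^-$ on $\partial\Omega_-$, the condition $p + |B|^2/2 = p_0 + |B_0|^2/2 + h^+$ on $\partial\Omega_+$ becomes $p_E = p_{E,0} - h^+$ on $\partial\Omega_+$, and the flux conditions $B\cdot n = B_0\cdot n + f^-$ on $\partial\Omega_-$ and $\int_{\mathcal{C}} B\cdot n\, dS = J$ transfer verbatim. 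Thus, with the sign-flipped data $\tilde h^\pm := -h^\pm$ and $\tilde f^- := f^-$, the smallness bound \eqref{smallnes:bound:G} is preserved, and Theorem \ref{Theorem:G:Euler} applies.

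Invoking Theorem \ref{Theorem:G:Euler} then delivers a unique Euler solution $(v, p_E) \in C^{2,\alpha}(\Omega)\times C^{2,\alpha}(\Omega)$ with $\norm{v - v_0}_{C^{2,\alpha}(\Omega)} \le M$ satisfying the transformed boundary conditions. Setting $B := v$ and $p := -p_E - |v|^2/2 + c_0$ (using the same additive constant $c_0$ that defined $p_{E,0}$ from $p_0$) yields a pair $(B, p) \in C^{2,\alpha}(\Omega)\times C^{2,\alpha}(\Omega)$ that solves \eqref{MHS2D:current}, has $\norm{B - B_0}_{C^{2,\alpha}(\Omega)} \le M$, and, by the computation in the previous paragraph, satisfies all four required boundary conditions. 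Uniqueness in this ball is inherited directly from uniqueness of the Euler fixed point.

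The only point where I anticipate needing care is the consistency of the additive constant: the identification $p_M = -H_E + c$ determines $p_M$ only up to a real constant, so $c_0$ has to be pinned down once and for all at the level of the base flows and then carried through to the perturbed solution. This is a routine matching step at a single boundary point, entirely analogous to the reconstruction $p = g + g_0$ with $g_0 = h(0) + p_0(0)$ carried out in the proof of Theorem \ref{Th:case2}, and introduces no new analytic content beyond what is already handled there. The underlying transport and div-curl building blocks (Propositions \ref{TP:prop} and \ref{DCP:prop}) are stated intrinsically for vector fields and scalar functions and therefore apply unchanged in the MHS setting.
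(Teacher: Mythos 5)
Your proposal is correct and follows exactly the route the paper takes: Section \ref{S:4} proves Theorem \ref{theorem:G:MHS} by invoking the variable identification \eqref{transformation:variable} ($B\leftrightarrow v$, $j\leftrightarrow\omega$, $p\leftrightarrow -H$) to transfer Theorem \ref{Theorem:G:Euler} verbatim, which is what you carry out, only with the sign bookkeeping for $\tilde h^{\pm}=-h^{\pm}$ and the additive constant made explicit. No further comment is needed.
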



\vspace{.2in}
\noindent{\bf{Acknowledgment.}} 
D. Alonso-Or\'an is supported by the Alexander von Humboldt Foundation. J. J. L.  Vel\'azquez acknowledges support through the CRC 1060 (The Mathematics of Emergent Effects) that is funded through the German Science Foundation (DFG), and the Deutsche Forschungsgemeinschaft (DFG, German Research Foundation) under Germany ’s Excellence Strategy – EXC-2047/1 – 390685813.

\end{document}